\theoremstyle{break}
\newtheorem{theorem}{Theorem}[section]
\newtheorem{proposition}[theorem]{Proposition}
\newtheorem{lemma}[theorem]{Lemma}
\newtheorem{corollary}[theorem]{Corollary}
\newtheorem{definition}[theorem]{Definition}
\newtheorem{remark}[theorem]{Remark}
\newtheorem{remarks}[theorem]{Remarks}
\theoremstyle{nonumberplain}
\newtheorem{proof}{Proof} 
\renewcommand*{\phi}{\varphi}
\renewcommand*{\epsilon}{\varepsilon}
\newcommand*{\ie}{\mbox{i.\,e.}\xspace}
\newcommand*{\eg}{\mbox{e.\,g.}\xspace}
\newcommand*{\cf}{cf.\xspace}
\newcommand*{\wrt}{\mbox{w.\,r.\,t.}\xspace}
\newcommand*{\iid}{\mbox{i.\,i.\,d.}\xspace}
\newcommand*{\A}{\mathbb{A}}
\newcommand*{\E}{\mathbb{E}}
\newcommand*{\M}{\mathbb{M}}
\newcommand*{\N}{\mathbb{N}}
\renewcommand*{\P}{\mathbb{P}}
\newcommand*{\Q}{\mathbb{Q}}
\newcommand*{\R}{\mathbb{R}}
\renewcommand*{\S}{\mathbb{S}}
\newcommand*{\Rplus}{\ensuremath{\mathbb{R}_{\scriptscriptstyle +}}}
\newcommand*{\Qplus}{\ensuremath{{\mathbb{Q}_{\scriptscriptstyle +}}}}
\newcommand*{\Eins}{\mathbbm{1}}
\newcommand*{\One}{\Eins}
\newcommand*{\calA}{\ensuremath{\mathcal{A}}}
\newcommand*{\calB}{\ensuremath{\mathcal{B}}}
\newcommand*{\calC}{\ensuremath{\mathcal{C}}}
\newcommand*{\calE}{\ensuremath{\mathcal{E}}}
\newcommand*{\calF}{\ensuremath{\mathcal{F}}}
\newcommand*{\calK}{\ensuremath{\mathcal{K}}}
\newcommand*{\calM}{\ensuremath{\mathcal{M}}}
\newcommand*{\calN}{\ensuremath{\mathcal{N}}}
\newcommand*{\calP}{\ensuremath{\mathcal{P}}}
\newcommand*{\calR}{\ensuremath{\mathcal{R}}}
\newcommand*{\calT}{\ensuremath{\mathcal{T}}}
\newcommand*{\calX}{\ensuremath{\mathcal{X}}}
\newcommand*{\calY}{\ensuremath{\mathcal{Y}}}
\DeclareMathOperator{\diam}{diam}
\DeclareMathOperator{\I}{I}
\DeclareMathOperator{\id}{id}
\DeclareMathOperator{\supp}{supp}
\DeclareMathOperator*{\wlim}{w-lim}
\DeclareMathOperator{\Complement}{\raisebox{-0.2ex}{$\complement$}}
\newcommand*{\wto}{\xrightarrow{w}}
\newcommand*{\abs}[1]{\ensuremath{\lvert#1\rvert}}
\newcommand*{\norm}[2][]{ \ensuremath{ \ifthenelse{\equal{#1}{}}{\lVert
      #2 \rVert}{\lVert #2 \rVert_{#1}} } }
\newcommand*{\wildcard}{{}\cdot{}}
\newcommand*{\from}{\colon}
\newcommand*{\difop}[1]{\mathop{}\!\mathrm{d}#1}
\newcommand*{\dif}{\difop}
\newcommand*{\assign}{\ensuremath{\mathrel{\mathop:}=}}
\newcommand*{\asn}{\ensuremath{\mathrel{\mathop:}=}}
\newcommand{\rest}[2]{#1\kern.15em\raisebox{-.2em}{$\big|$ \kern
    -.2em #2}}
\newcommand*{\clBall}{\ensuremath{\overline{B}}}
\newcommand*{\ExpDistribution}[1]{\ensuremath{\mathrm{Exp}(#1)}}
\newcommand*{\nullmeasure}{\ensuremath{o}}
\newcommand*{\tfset}{\calT}
\newcommand*{\mass}[1]{
  \ensuremath{
    \ifthenelse{ \equal{#1}{} }
    {\mathfrak{m}}
    {\mathfrak{m}(#1)}
  }
}
\newcommand*{\DD}[1]{w(#1)}
\newcommand*{\MMD}[2][]{
  \ensuremath{
    \ifthenelse{ \equal{#1}{} }
     { \ifthenelse{ \equal{#2}{} }
     {{V_\delta}}
     {{V_\delta(#2)}}
    }
    {
    \ifthenelse{ \equal{#2}{} }
     {{V_{#1}}}
     {{V_{#1}(#2)}}
     }
  }
}
\newcommand*{\AMMD}[2][]{
  \ensuremath{
    \ifthenelse{ \equal{#1}{} }
     { \ifthenelse{ \equal{#2}{} }
     {{U_\delta}}
     {{U_\delta(#2)}}
    }
    {
    \ifthenelse{ \equal{#2}{} }
     {{U_{#1}}}
     {{U_{#1}(#2)}}
     }
  }
}
\newcommand*{\vect}[1]{\mathbf{\boldsymbol{#1}}}
\newcommand*{\matr}[1]{\mathbf{\boldsymbol{#1}}}
\newcommand*{\normalize}[1]{\overline{#1}}
\newcommand*{\mtwom}{\mathbb{M}^{(2)}}
\newcommand*{\Mtwo}{\mtwom}
\newcommand*{\MtwoProb}{\mathbb{M}^{(2)}_{1,1} }   
\newcommand*{\dP}{d_\mathrm{P}}
\newcommand*{\tGwto}{\ensuremath{\xrightarrow{2Gw}}}
\newcommand*{\dtGP}{d_\mathrm{2GP}}
\newcommand*{\tautGP}{\ensuremath{\tau_{2GP}}}
\newcommand*{\tautGw}{\ensuremath{\tau_{2Gw}}}
\newcommand*{\mm}[2][]{
  \ensuremath{
    \ifthenelse{ \equal{#1}{} }
    {M_{#2}}
    {M_{#2, #1}}
  }
}
\newcommand*{\dmm}[2][]{ 
  \ensuremath{
    \ifthenelse{ \equal{#1}{} }
      {\Lambda_{#2}}
      {\Lambda^{#1}_{#2} }
  }
}
\renewcommand*{\dmm}[2]{
  \ensuremath{
  \Lambda^{#1}_{#2}
  }
}
\newcommand*{\nmm}[2][]{
  \ensuremath{
    \ifthenelse{ \equal{#1}{} }
    {\hat{M}_{#2}}
    {\hat{M}_{#2, #1}}
  }
}
\newcommand{\freq}[3]{ 
   \ensuremath{f_{#1, #2}(#3)} 
} 
\newcommand*{\freqvect}[2]{
   \ensuremath{\vect{f}_{#1}(#2)}   
}
\setlist[enumerate, 1]{label=(\arabic*), ref=(\arabic*)}
\setlist[enumerate, 2]{label=(\alph*), ref=\theenumi\alph*}
\begin{document}

\begin{titlepage}
  \title{Convergence of metric two-level measure spaces}
  \author{Roland Meizis\thanks{\enspace Faculty of Mathematics,
      University of Duisburg-Essen, 45141 Essen, Germany \newline
      Email:
      \href{mailto:roland.meizis@uni-due.de}{roland.meizis@uni-due.de}
    }} \date{November 16, 2019}
\end{titlepage}
\maketitle

\begin{abstract}
  We extend the notion of metric measure spaces to so-called metric
  two-level measure spaces (m2m spaces): An m2m space $(X, r, \nu)$ is
  a Polish metric space $(X, r)$ equipped with a two-level measure
  $\nu \in \mathcal{M}_f(\mathcal{M}_f(X))$, i.e. a finite measure on
  the set of finite measures on $X$. We introduce a topology on the
  set of (equivalence classes of) m2m spaces induced by certain test
  functions (\ie the initial topology with respect to these test
  functions) and show that this topology is Polish by providing a
  complete metric.

  The framework introduced in this article is motivated by possible
  applications in biology. It is well suited for modeling the random
  evolution of the genealogy of a population in a hierarchical system
  with two levels, for example, host--parasite systems or populations
  which are divided into colonies. As an example we apply our theory
  to construct a random m2m space modeling the genealogy of a nested
  Kingman coalescent.
\end{abstract}

\paragraph{Keywords:} metric two-level measure spaces, metric measure
spaces, two-level measures, nested Kingman coalescent measure tree,
two-level Gromov-weak topology

\paragraph{AMS MSC 2010:}  Primary 60B10; Secondary 60B05, 92D10

\setcounter{tocdepth}{2} 
\tableofcontents

\section{Introduction}

A metric measure space, hereinafter abbreviated as mm space, is a
triple $(X, r, \mu)$ where $(X, r)$ is a Polish metric space (\ie a
complete and separable metric space) and $\mu$ is a finite Borel
measure on $X$. Metric measure spaces are central objects in
probability theory. Every random variable on a Polish metric space
$(X, r)$ can be identified with its probability distribution $\mu$ on
$X$. Hence, the random variable is represented by the metric measure
space $(X, r, \mu)$. Therefore, metric measure spaces occur almost
everywhere in probability theory, although most of the time only
implicitly. On the other hand, notions of convergence of mm spaces and
metrics on the set of (equivalence classes of) mm spaces have been of
interest in geometric analysis (\cite{Gromov99, KTSturm06,
  LottVillani09}), problems of optimal transport (\cite[in particular
chapter 27]{VillaniBook}) and mathematical biology (\cite{GPW09}). The
introduction of these notions of convergence has allowed for the study
of mm space-valued stochastic processes. This is of particular
interest in mathematical biology, where such processes are used to
study the evolution of genealogical (or phylogenetic) trees (\cf
\cite{GPW13,DGP12,Gloede12,KliemWinter17,Gufler}). Typically, the
metric space $(X, r)$ incorporates the genealogical tree and $\mu$ is
a uniform sampling measure on the leaves of the tree.

In this article we extend the theory of metric measure spaces, in
particular the definitions and results from \cite{GPW09}. In
\cite{GPW09} the authors study metric \emph{probability} measure
spaces, \ie metric measure spaces $(X, r, \mu)$ where $\mu$ is a
probability measure. They introduce the Gromov-weak topology on the
set $\M_1$ of equivalence classes of metric probability measure
spaces. This topology is defined as an initial topology with respect
to a certain set of test functions on $\M_1$. Roughly speaking,
convergence in the Gromov-weak topology is equivalent to weak
convergence of the distributions of sampled finite subspaces.
The authors also introduce the Gromov-Prokhorov metric and prove that
it induces the Gromov-weak topology. In particular, they show that
$\M_1$ equipped with the Gromov-weak topology is Polish and thus a
suitable state space for stochastic processes.

The results in \cite{GPW09} have been generalized to metric measure
spaces with finite measures (\cite{Gloede12}) and extended to marked
metric measures spaces (see \cite{DGP11} for probability measures and
\cite{KliemWinter17} for finite measures). We also seek to extend the
results in \cite{GPW09} by replacing the measure $\mu$ on the metric
space $(X, r)$ with a two-level measure $\nu \in \calM_f(\calM_f(X))$,
\ie with a finite measure on the set of finite measures on $X$. This
extension is motivated by the study of two-level branching systems in
biology, \eg host--parasite systems, where individuals of the first
level are grouped together to form the second level and both levels
are subject to branching or resampling mechanisms.

Let us give a few examples of models of such two-level systems that
can be found in the mathematical literature: 
\begin{enumerate}
  \item\label{it:Examples2LvlModels_3} Dawson, Hochberg and Wu develop
  a two-level branching process in \cite{DHW90, WuThesis}.
  They consider particles which move in $\R^d$ and are subject to a
  birth-and-death process. Moreover, the particles are grouped into
  so-called superparticles, which are subject to another
  birth-and-death process. The state of this process is given by a
  two-level measure $\nu \in \calM(\calM(\R^d))$, \ie a Borel measure
  on the set of Borel measures on $\R^d$.
  The authors also consider the small mass, high density limit of the
  discrete process. This leads to a two-level diffusion process.
  \item The authors in \cite{BansayeTran11} provide a continuous-time
  two-level branching model for parasites in cells. The parasites live
  and reproduce (\ie branch) inside of cells which are subject to cell
  division. At division of a cell the parasites inside are distributed
  randomly between the two daughter cells. The model originates from
  the discrete processes in \cite{Kimmel97, Bansaye08} and
  can be seen as a diffusion limit of these processes.
  \item Another example of a two-level process is given in
  \cite{MeleardRoelly}. The authors model the evolution of a
  population together with different kinds of cells that proliferate
  inside of the individuals of the population. The individuals follow
  a birth-and-death mechanism (including mutation and selection) and
  the cells inside the individuals follow another birth-and-death
  mechanism.
  \item\label{it:Examples2LvlModels_4} Dawson studies two-level
  resampling models in \cite{Dawson17}. He considers a random process
  that models a population which is divided into colonies. The type
  space $X$ of the individuals is finite and the state of the process
  is given by a two-level probability measure
  $\nu \in \calM_1(\calM_1(X))$, \ie a Borel probability measure on
  the set of Borel probability measures on $X$. The individuals are
  subject to mutation, selection, resampling and migration mechanisms
  while at the same time the colonies are also subject to selection
  and resampling mechanisms. The author considers the finite case in
  which the number of colonies and the number of individuals per
  colony are fixed as well as the limit case when both of these
  numbers go to infinity. The limit is a two-level diffusion process
  called the two-level Fleming-Viot process.
\end{enumerate}

We want to extend the theory of metric measure spaces in such a way
that the new framework is suitable for modeling the aforementioned
examples. The hierarchical two-level structure is of particular
interest for us. That is why we study triples $(X, r, \nu)$ where
$(X, r)$ is a Polish metric space and $\nu \in \calM_f(\calM_f(X))$.
We call such a triple a \emph{metric two-level measure space}
(abbreviated as \emph{m2m space}).

Let us give an example how we intend to use an m2m space $(X, r, \nu)$
to model a host--parasite system: The metric space $(X, r)$ represents
the set of parasites $X$ together with its genealogical (or
phylogenetic) tree, which is encoded in the metric $r$. The two-level
measure $\nu$ encodes the distribution of individuals among the hosts.
For example, a single host with two parasites might be represented by
a measure $\delta_{\delta_{x}+\delta_{y}}$, whereas two hosts with one
parasite each might be represented by a measure
$\delta_{\delta_x}+\delta_{\delta_y}$. One can easily imagine more
complicated situations with more hosts and parasites. The theory
developed in this article also allows to model two-level diffusions.
They appear as small mass, high density limits of atomic two-level
measures like above (\cf examples \ref{it:Examples2LvlModels_3} and
\ref{it:Examples2LvlModels_4}).

{
  Before we summarize the content of this article, let us briefly
  compare our approach of metric two-level measure spaces to marked
  metric measure spaces, which have been introduced in~\cite{DGP11}
  and further developed in \cite{DGP12, KliemWinter17}. A marked
  metric measure space is a metric measure space with an additional
  mark space. The mark space is used to associate individuals with
  traits or groups. Thus marked metric measure spaces can also be used
  to model host--parasite systems with the mark space representing the
  set of hosts. There are two main differences to our approach:
  \begin{itemize}
    \item The mark space of a marked metric measure space must be
    fixed before implementing a model. Thus the set of hosts must be
    chosen beforehand. In a setting with metric two-level measure
    spaces the number of hosts is variable and can change over time.
    \item Two-level measures are a necessary ingredient for two-level
    diffusions (\cf the work of Dawson et al.~cited in examples
    \ref{it:Examples2LvlModels_3} and \ref{it:Examples2LvlModels_4}).
    Thus marked metric measure spaces cannot be used to model
    two-level diffusions.
  \end{itemize}
}
In the context of our theory we are only interested in the structure
of the (genealogical) trees and not in their labels. To get rid of
labels, we define a notion of equivalence for m2m spaces. The focus of
this equivalence is on the structure of the measure $\nu$ and its
``effective support in $X$''. By effective support in $X$ we mean the
smallest closed subset $C \subset X$ with $\supp\mu \subset C$ for
$\nu$-almost every $\mu \in \calM_f(X)$. This set is equal to the
support of the first moment measure
$\mm{\nu}(\wildcard) \asn \int \mu(\wildcard) \dif\nu(\mu)$.
Roughly speaking, we identify two m2m spaces $(X, r, \nu)$ and
$(Y, d, \lambda)$ if $\lambda$ can be mapped into $\nu$ with a
function that is isometric on the effective supports of the
measures. To be precise, $(X, r, \nu)$ and $(Y, d, \lambda)$ are said
to be equivalent if there is a function $f \from X \to Y$ which is
isometric on $\supp \mm{\nu}$ (the effective support in $X$ of
$\nu$) and measure preserving in the sense that $\nu = f_{**}\lambda$.
Here, $f_{**}\lambda$ denotes the two-level push-forward of $\lambda$.
It is the push-forward of the push-forward operator of $f$. That is,
$f_{**}\lambda = \lambda \circ f_*^{-1}$, where $f_*$ is the function
from $\calM_f(X)$ to $\calM_f(Y)$ that maps a finite Borel measure
$\mu$ to its push-forward $f_*\mu = \mu \circ f^{-1}$.

This notion of equivalence allows us to consider the set $\Mtwo$ of
all equivalence classes of m2m spaces. We define a set $\tfset$ of
bounded test functions on $\Mtwo$ that separates points in $\Mtwo$.
The set $\tfset$ consists of three types of test functions
$\Phi \from \Mtwo \to \R$, which serve different purposes for
determining an m2m space $(X, r , \nu)$. The first kind is of the form
\begin{align}
      \Phi((X, r, \nu)) &= \chi(\mass{\nu}) \tag{\ref*{TF1}}
\end{align}
where $\chi \in \calC_b(\Rplus)$ with $\chi(0) = 0$ and where
$\mass{\nu}$ denotes the total mass of a measure $\nu$. Test functions
of this form determine the mass $\mass{\nu}$ of the evaluated m2m
space. The second kind is of the form
\begin{align}
    \Phi((X, r, \nu)) &= \chi(\mass{\nu}) \int \psi(\mass{\vect{\mu}})
                        \dif \normalize{\nu}^{\otimes m}(\vect{\mu}) \tag{\ref*{TF2}},
\end{align}
where $m \in \N$ and $\psi \in \calC_b(\Rplus^m)$ with
$\psi(\vect{a}) = 0$ whenever any of the components of the vector
$\vect{a} \in \Rplus^m$ is 0 and where
$\normalize{\nu} = \frac{\nu}{\mass{\nu}}$ denotes the normalization
of $\nu$. The test functions of the form \eqref{TF2} determine the
normalized mass distribution
$\mass{}_*\normalize{\nu} \in \calM_1(\Rplus)$ of the evaluated m2m
space. The third kind of test function is of the form
\begin{align}
      \Phi((X, r, \nu)) &= \chi(\mass{\nu}) \int \psi(\mass{\vect{\mu}})
    \int \phi \circ R(\matr{x}) \dif \normalize{\vect{\mu}}^{\otimes
      \vect{n}}(\matr{x}) \dif \normalize{\nu}^{\otimes m}(\vect{\mu}),
                        \tag{\ref*{TF3}}
\end{align}
where $\vect{n} = (n_1, \dotsc, n_m) \in \N^m$,
$\phi \in \calC_b(\Rplus^{\abs{\vect{n}} \times \abs{\vect{n}}})$ and
$\normalize{\vect{\mu}}^{\otimes \vect{n}}\asn \bigotimes_{i=1}^m
\normalize{\mu_i}^{\otimes n_i}$. Test functions of this form
determine the space $(X, r)$ (more precisely, the support
$\supp\mm{\nu}$ equipped with the restriction of $r$) and the
structure of $\normalize{\nu}$.

The test functions in $\tfset$ are used to induce a Hausdorff topology
on $\Mtwo$. The two-level Gromov-weak topology $\tautGw$ is defined as
the initial topology with respect to $\tfset$, \ie the coarsest
topology on $\Mtwo$ such that all functions in $\tfset$ are
continuous. $\Mtwo$ equipped with the two-level Gromov-weak topology
is in fact a Polish space. To show this, we introduce the two-level
Gromov-Prokhorov metric $\dtGP$, which is complete and metrizes the
two-level Gromov-weak topology. Heuristically, to compute the
two-level Gromov-Prokhorov distance between two m2m spaces
$(X, r, \nu)$ and $(Y, d, \lambda)$ we embed $X$ and $Y$ isometrically
into some common Polish metric space $Z$ and compute the Prokhorov
distance between the two-level push-forwards of $\nu$ and $\lambda$.
The two-level Gromov-Prokhorov distance is defined as the infimum of
this value over all such embeddings.

It turns out that the functions from $\tfset$ are convergence
determining for $\calM_1(\Mtwo)$. Thus, they are a suitable domain for
generators of Markov processes on $\Mtwo$. In particular, this will
allow us to create m2m space-valued analogs of the two-level examples
given above in future research articles.

At the end of our article we apply our framework to an example. We
define a two-level coalescent process called the nested Kingman
coalescent, which is a coalescent model for individuals of different
species. We equip the genealogical tree stemming from this coalescent
with a two-level measure that contains the two-level structure of the
coalescent. The result is a random m2m space called the nested Kingman
coalescent measure tree.

Finally, let us summarize the two main obstacles which arise when we
extend the theory of mm spaces to m2m spaces:
\begin{enumerate}
  \item In the one-level case the set of mm spaces can be embedded
  isomorphically into $\calM_f(\Rplus^{\N \times \N})$ using distance
  matrix distributions (\cf \cite{GPW09}). It follows directly that
  the Gromov-weak topology is metrizable and thus working with
  sequences is sufficient. Such an embedding is not possible anymore
  for m2m spaces. Therefore, it is not a priori clear whether the
  two-level Gromov weak topology is first countable and our proofs
  (for continuity, compactness, etc.) must not rely on sequences.
  Instead, we will work with nets. Nets are a generalization of
  sequences and most of the theorems for metric spaces using sequences
  (\eg continuity of functions, closedness of sets, compactness of
  sets) hold true for general topological spaces when sequences are
  replaced by nets. After proving that $\Mtwo$ equipped with the
  two-level Gromov-weak topology $\tautGw$ is in fact Polish, we then
  go back using ordinary sequences.
  
  \item For characterizing compact sets of m2m spaces it is necessary
  to work with finite first moment measures. Unfortunately, the first
  moment measure
  $\mm{\nu}(\wildcard) = \int \mu(\wildcard) \dif\nu(\mu)$ of a
  two-level measure $\nu \in \calM_f(\calM_f(X))$ may be infinite. We
  can overcome this problem by approximating $\nu$ sufficiently close
  by a two-level measure with finite moment measures. This is done by
  restricting $\nu$ to
  $\calM_{\leq K}(X) = \set{\mu \in \calM_f(X) | \mu(X) \leq K}$ using
  a smooth density function $f_K$. Then, $f_K\cdot \nu$ is an element
  of $\calM_f(\calM_{\leq K}(X))$ and has finite moment measures.
  Moreover, $f_K \cdot \nu$ converges weakly to $\nu$ for
  $K \nearrow \infty$.
\end{enumerate}

\paragraph{Outline:}
\textit{The rest of this article is organized as follows: We start
  with some preliminaries about finite measures, two-level measures
  and nets in Section~\ref{sec:preliminaries}. In the subsequent
  section
  we introduce the notion of metric two-level measure spaces (m2m
  spaces) and the two-level Gromov-weak topology $\tautGw$ on the set
  $\Mtwo$ of (equivalence classes of) m2m spaces. In
  Section~\ref{sec:d2GP-metric} we define the two-level
  Gromov-Prokhorov metric $\dtGP$ on $\Mtwo$ and show that
  $(\Mtwo, \dtGP)$ is a Polish metric space (\ie separable and
  complete). Sections \ref{sec:DDandMMD}, \ref{sec:Approximation} and
  \ref{sec:compactness} are devoted to compact sets in $\Mtwo$. In
  Section~\ref{sec:DDandMMD} we define the distance distribution and
  the modulus of mass distribution for finite measures. In
  Section~\ref{sec:Approximation} we introduce an approximation for
  two-level measures. The approximating two-level measures always have
  finite moment measures. This enables us to characterize compactness
  in $\Mtwo$ in Section~\ref{sec:compactness}. There, we give some
  equivalent conditions for compactness in terms of the distance
  distribution and the modulus of mass distribution. With these
  compactness conditions we are able to prove in
  Section~\ref{sec:EquivalenceOfBothTopologies} that the topology
  induced by the metric $\dtGP$ coincides with the two-level
  Gromov-weak topology $\tautGw$. In Section~\ref{sec:tightness} we
  investigate tightness for probability measures on $\Mtwo$. The
  results about tightness are used in Section~\ref{sec:NestedKingman},
  in which we construct a random m2m space called the nested Kingman
  coalescent measure tree.}

\section{Preliminaries}
\label{sec:preliminaries}

In this section we first summarize some basic properties about the
weak topology and the Prokhorov metric for finite Borel measures. Then
we introduce the first moment measure and the two-level push-forward.
Both are essential ingredients for the definitions of m2m spaces.
Finally, in the last subsection, we introduce nets in topological
spaces.

Throughout the preliminaries and the rest of the article $(X, r)$ will
always be a non-empty Polish metric space, unless otherwise mentioned.
A Polish metric space is a complete and sep\-ar\-able metric space,
while a Polish space is a topological space which is separable and
metrizable with a complete metric.

\subsection{Finite measures and the Prokhorov metric}

By $\calM_f(X)$ we denote the set of all finite Borel measures on $X$,
equipped with the weak topology. The weak topology on $\calM_f(X)$ is
the initial topology with respect to all functions
$\mu \mapsto \int f \dif\mu$ with $f \in \calC_b(X)$, where
$\calC_b(X)$ denotes the set of bounded and continuous functions
from $X$ to $\R$. Recall that the initial topology on a set $A$ with
respect to a set $\calF$ of functions on $A$ is defined as the
coarsest topology on $A$ such that the functions in $\calF$ are
continuous. By definition, a sequence $(\mu_n)_n$ of finite Borel
measures on $X$ converges weakly to a finite Borel measure $\mu$ if
and only if
\begin{displaymath}
  \int f \dif\mu_n \to \int f \dif\mu
\end{displaymath}
for every test function $f \in \calC_b(X)$. 

It is well known that the set $\calM_f(X)$ equipped with the weak
topology is a Polish space and that the Prokhorov metric $\dP$ is a
complete metric for this topology (\cf for example
\cite{Prokhorov56}). The Prokhorov distance $\dP(\mu, \eta)$ between
two finite measures $\mu, \eta \in \calM_f(X)$ is defined as the
infimum over all $\epsilon>0$ such that
\begin{align*}
  \mu(A) \leq \eta(B(A,\epsilon)) + \epsilon \quad \text{and} \quad
           \eta(A) \leq   \mu(B(A,\epsilon)) + \epsilon
\end{align*}
for all closed sets $A \subset X$, where
$B(A, \epsilon) = \bigcup_{a \in A}B(a, \epsilon)$ and
$B(a, \epsilon)$ is the open ball of radius $\epsilon$ around $a$.
To emphasize that we are using the Prokhorov metric for measures on a
specific metric space $(X, r)$, we sometimes write $\dP^X$ or
$\dP^{(X,r)}$ instead of $\dP$.

For $\mu \in \calM_f(X)$ we define the mass of $\mu$ by $\mass{\mu}
\asn \mu(X)$ and the normalization of $\mu$ by
\begin{align*}
  \normalize{\mu} \asn
  \begin{cases}
     \frac{\mu}{\mass{\mu}} &\mu \not= \nullmeasure \\
     \nullmeasure &\mu = \nullmeasure.
  \end{cases}
\end{align*}
Here, $\nullmeasure$ denotes the null measure, which is 0 on all sets.
It is easy to see that the function $\mu \mapsto \normalize{\mu}$ is
continuous on $\calM_f(X)\setminus \set{\nullmeasure}$. For a vector
$\vect{\mu} = (\mu_1, \mu_2, \dotsc, \mu_m) \in \calM_f(X)^m$ we
define $\mass{\vect{\mu}} = (\mass{\mu_1}, \dotsc, \mass{\mu_m})$ and
$\normalize{\vect{\mu}} = (\normalize{\mu_1}, \dotsc,
\normalize{\mu_m})$. Furthermore, for every $K\geq 0$ we define the
sets
\begin{align*}
  \calM_{\leq K}(X) \asn \set{\mu \in \calM_f(X) | \mass{\mu} \leq K} \\
  \intertext{and}
  \calM_K(X) \asn \set{\mu \in \calM_f(X) | \mass{\mu} = K}.
\end{align*}
In particular, $\calM_1(X)$ denotes the set of probability measures on
$X$.

Recall that a set $\calF \subset \calM_f(X)$ is called tight if for
every $\epsilon>0$ there is a compact set $C \subset X$ such that
$\mu(\Complement C) < \epsilon$ for every $\mu \in \calF$. We say that
a single measure $\mu \in \calM_f(X)$ is tight if the set $\set{\mu}$
is tight. Finite measures on Polish spaces are always tight. It is
well known that for \emph{probability} measures tightness is
equivalent to relative compactness. However, for \emph{finite}
measures we also need to ensure that the masses of the measures are
bounded. This is part of the original theorem from Prokhorov
in \cite[Theorem 1.12]{Prokhorov56}.
\begin{proposition}[Prokhorov's Theorem]\label{prp:ProkhorovsTheorem}
  Let $X$ be a Polish space and $\calF \in \calM_f(X)$. $\calF$ is
  relatively compact in the weak topology if and only if $\calF$ is
  tight and the set $\set{ \mass{\mu} | \mu \in \calF}$ is bounded in
  $\R$.
\end{proposition}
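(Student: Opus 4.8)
The plan is to prove the two implications separately, exploiting the fact stated above that $\calM_f(X)$ with the weak topology is metrizable (by $\dP$), so that relative compactness coincides with sequential relative compactness and one may argue throughout with ordinary sequences rather than general nets. The necessity of the two conditions is routine, while the real work lies in the sufficiency direction, where tightness together with a uniform mass bound has to be leveraged to extract weakly convergent subsequences.

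For the necessity (\emph{relatively compact} implies \emph{tight and bounded mass}), boundedness of $\set{\mass{\mu} | \mu \in \calF}$ is immediate: the map $\mu \mapsto \mass{\mu} = \int 1 \dif\mu$ is weakly continuous, since the constant function $1$ lies in $\calC_b(X)$, so it carries the compact closure $\cl \calF$ onto a compact and hence bounded subset of $\R$. For tightness I would argue by contradiction. Fixing a countable dense sequence $(x_i)_{i\in\N}$ and writing $G_{n,k} = \bigcup_{i\leq k} B(x_i, 1/n)$, I would first show that for every $\epsilon>0$ and every $n$ there is a $k$ with $\mu(\Complement G_{n,k}) < \epsilon$ uniformly over $\calF$; otherwise one finds measures $\mu_k$ with $\mu_k(\Complement G_{n,k}) \geq \epsilon$, passes to a weak limit $\mu$ along a subsequence, and derives a contradiction from the portmanteau inequality $\liminf \mu_{k_j}(G_{n,k}) \geq \mu(G_{n,k})$ combined with $\mass{\mu_{k_j}} \to \mass{\mu}$ and $G_{n,k} \nearrow X$. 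Intersecting the closures $\overline{G_{n,k(n)}}$ over $n$, with $\epsilon$ replaced by $\epsilon 2^{-n}$, then yields a closed, totally bounded and hence (as $X$ is complete) compact set $K$ with $\mu(\Complement K) < \epsilon$ for every $\mu \in \calF$.

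For the sufficiency, which I expect to be the main obstacle, the strategy is to reduce to a compact ambient space. Embedding the Polish space $X$ homeomorphically into the Hilbert cube $\hat X = [0,1]^{\N}$, every $\mu \in \calF$ pushes forward to a finite measure on the compact metric space $\hat X$ of mass at most $K \asn \sup_\mu \mass{\mu} < \infty$. On $\hat X$ the set of such measures is weak-$*$ compact and metrizable (by Banach--Alaoglu applied to the separable Banach space $\calC(\hat X)$, together with the Riesz representation theorem), so any sequence in $\calF$ admits a subsequence converging weakly on $\hat X$ to some limit $\mu$. The remaining and most delicate point is to transfer this back to $X$: tightness of $\calF$ must be used to show that $\mu$ assigns no mass to $\hat X \setminus X$, so that $\mu$ is the image of a genuine finite Borel measure on $X$, and that testing against $\calC_b(X)$ rather than $\calC(\hat X)$ still yields convergence. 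Here I would approximate functions in $\calC_b(X)$ by functions on $\hat X$ on the tightness-compact sets and control the error outside them via the uniform tightness estimate. Carefully establishing this passage -- in particular the interplay between the weak-$*$ limit on the compactification and the escape of mass controlled by tightness -- is the crux of the argument.
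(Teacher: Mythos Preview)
The paper does not supply its own proof of this proposition: it is quoted as a classical fact and attributed to Prokhorov's original article \cite[Theorem 1.12]{Prokhorov56}, with only the follow-up observation that boundedness of $\set{\mass{\mu} | \mu \in \calF}$ is equivalent to boundedness in the Prokhorov metric. So there is nothing to compare your argument against on the paper's side.

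That said, your outline is a correct and standard route to the result. The necessity direction is fine; your sketch with the sets $G_{n,k}$ works once one combines the portmanteau lower bound $\liminf_j \mu_{k_j}(G_{n,k_0}) \geq \mu(G_{n,k_0})$ with the upper bound $\limsup_j \mu_{k_j}(G_{n,k_0}) \leq \mass{\mu} - \epsilon$ (coming from $\mu_{k_j}(G_{n,k_j}) \leq \mass{\mu_{k_j}} - \epsilon$ and monotonicity in $k$), and then lets $k_0 \to \infty$. For sufficiency, the compactification argument via the Hilbert cube and Banach--Alaoglu is exactly the textbook approach; the only point worth making explicit is that on the compact $\hat X$ one automatically has $\mass{\mu_{n_k}} \to \mass{\mu}$, which together with portmanteau on the closed sets $K_\epsilon$ gives $\mu(\hat X \setminus X) = 0$, and then a Tietze-type extension of $f \in \calC_b(X)$ from each $K_\epsilon$ handles the transfer back to $X$.
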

Observe that $\set{ \mass{\mu} | \mu \in \calF}$ is bounded if and only
if $\calF$ is bounded in the Prokhorov metric since
\begin{displaymath}
\abs{\mass{\mu}-\mass{\eta}} \leq \dP(\mu, \eta) \leq
\max(\mass{\mu}, \mass{\eta})
\end{displaymath}
for all $\mu, \eta \in \calM_f(X)$.

\subsection{The first moment measure $\mm{\nu}$ and its support}

In this article we deal with \emph{two-level measures} of the form
$\nu \in \calM_f(\calM_f(X))$. They are closely related to random
measures, which are represented by measures in $\calM_1(\calM_f(X))$.
An important tool in the analysis of two-level measures is the
\emph{first moment measure}, also called the \emph{intensity measure}.
The first moment measure of $\nu$ is the Borel measure on $X$ defined
by
\begin{displaymath}
  \mm{\nu}(\wildcard) \asn \int \mu(\wildcard) \dif\nu(\mu).
\end{displaymath}
Note that the first moment measure may be an infinite measure.

\begin{lemma}\label{lm:MomentMeasureIsSupportingMeasure}
  Let $X$ be a Polish space and $\nu \in \calM_f(\calM_f(X))$. The
  first moment measure $\mm{\nu}$ is a \emph{supporting measure} of
  $\nu$ in the sense that
  \begin{displaymath}
    \int f \dif\mm{\nu} = 0 \Leftrightarrow \int f \dif\mu = 0 \text{ for
      $\nu$-almost every $\mu \in \calM_f(X)$}
  \end{displaymath}
  for any non-negative measurable function $f \from X \to \R$.
\end{lemma}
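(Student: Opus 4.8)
The plan is to reduce the assertion to the elementary measure-theoretic fact that a non-negative measurable function integrates to zero if and only if it vanishes almost everywhere. The link between the two sides of the stated equivalence is the Fubini-type identity
\[
  \int f \dif\mm{\nu} = \int \Bigl( \int f \dif\mu \Bigr) \dif\nu(\mu),
\]
valid for every non-negative measurable $f \from X \to \R$. Granting this identity and writing $g(\mu) \asn \int f \dif\mu$, the right-hand side is $\int g \dif\nu$; since $g$ is non-negative, the standard fact yields $\int g \dif\nu = 0$ if and only if $g(\mu) = 0$ for $\nu$-almost every $\mu$, which is exactly the claim, because $g(\mu) = 0 \Leftrightarrow \int f \dif\mu = 0$.

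To prove the identity I would use the usual approximation by simple functions together with monotone convergence. For an indicator $f = \Eins_A$ of a Borel set $A$ the identity is nothing but the defining equation $\mm{\nu}(A) = \int \mu(A) \dif\nu(\mu)$, and by linearity it then holds for all non-negative simple functions. For general non-negative measurable $f$ I would choose an increasing sequence of simple functions $f_n \nearrow f$ and apply monotone convergence three times: once on $(X, \mm{\nu})$ to obtain $\int f_n \dif\mm{\nu} \nearrow \int f \dif\mm{\nu}$, once on each $(X, \mu)$ to obtain $g_n(\mu) \nearrow g(\mu)$ with $g_n(\mu) \asn \int f_n \dif\mu$, and finally on $(\calM_f(X), \nu)$ to pass $\int g_n \dif\nu \nearrow \int g \dif\nu$. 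Since the identity holds at each finite stage for the simple functions $f_n$, taking limits gives it for $f$.

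The one point requiring genuine care is the measurability of $g$, which is needed both for $\int g \dif\nu$ to make sense and for the monotone convergence step on $(\calM_f(X), \nu)$. I would argue it as follows: for open $U \subset X$ the map $\mu \mapsto \mu(U)$ is lower semicontinuous on $\calM_f(X)$ by the Portmanteau theorem, hence Borel measurable; a Dynkin-system argument then extends measurability of $\mu \mapsto \mu(A)$ to all Borel sets $A$, so that $\mu \mapsto \int f \dif\mu$ is measurable for simple $f$ and, as an increasing pointwise limit, for every non-negative measurable $f$. Note that $g$ may take the value $+\infty$ (indeed $\mm{\nu}$ itself may be infinite, as remarked before the lemma), but this causes no difficulty, since both the elementary fact and monotone convergence hold for $[0,\infty]$-valued functions. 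The main obstacle is thus essentially bookkeeping: establishing measurability and keeping track of the possibly infinite values throughout the approximation.
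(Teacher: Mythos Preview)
Your proof is correct and follows essentially the same route as the paper: both arguments rest on the Fubini-type identity $\int f \dif\mm{\nu} = \int\bigl(\int f \dif\mu\bigr)\dif\nu(\mu)$ and the elementary fact that a non-negative measurable function integrates to zero if and only if it vanishes almost everywhere. The paper simply takes the identity and the measurability of $\mu \mapsto \int f \dif\mu$ for granted and spells out the ``vanishes a.e.'' fact via the sets $\{\mu : \int f \dif\mu > 1/n\}$, whereas you justify these prerequisites explicitly; there is no substantive difference in strategy.
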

\begin{proof}
  If $0 < \int f \dif\mm{\nu} = \int \int f \dif\mu \dif\nu(\mu)$,
  then the set of all $\mu \in \calM_f(X)$ with $\int f \dif\mu > 0$
  cannot have $\nu$-measure zero. On the other hand, if
  $0 = \int f \dif\mm{\nu}$, then the set of all $\mu \in \calM_f(X)$
  with $\int f \dif\mu > \frac{1}{n}$ must have $\nu$-measure zero for
  every $n \in \N$. Consequently, $\int f \dif\mu = 0$ for $\nu$-almost
  every $\mu \in \calM_f(X)$.
\end{proof}

Recall that the support $\supp \mu$ of a Borel measure $\mu$ is
defined as the smallest closed subset $A$ of $X$ with
$\mu(\Complement A) = 0$. Equivalently, it is the set of all $x \in X$
with $\mu(B(x, \epsilon)) > 0$ for every $\epsilon>0$.
\begin{corollary}\label{cor:SupportOfMomentMeasure}
  Let $X$ be a Polish space and $\nu \in \calM_f(\calM_f(X))$. Then
  $\supp \mu \subset \supp \mm{\nu}$ for $\nu$-almost every
  $\mu \in \calM_f(X)$.
\end{corollary}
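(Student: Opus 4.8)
The plan is to reduce the claim to a single application of Lemma~\ref{lm:MomentMeasureIsSupportingMeasure} with a well-chosen test function. The first observation I would make is that, for a fixed $\mu \in \calM_f(X)$, the inclusion $\supp \mu \subset \supp \mm{\nu}$ is equivalent to $\mu$ assigning no mass to the open complement of $\supp \mm{\nu}$, i.e.\ to $\mu(\Complement \supp \mm{\nu}) = 0$. Indeed, $\supp \mu$ is by definition the smallest closed set of full $\mu$-measure, so it is contained in the closed set $\supp \mm{\nu}$ precisely when the open set $U \asn \Complement \supp \mm{\nu}$ is a $\mu$-null set.

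Next I would record that $U$ is itself $\mm{\nu}$-null. Since $X$ is Polish and hence second countable, the complement of the support of any (possibly infinite) Borel measure is a union of open null sets which, by the Lindelöf property, may be taken to be countable and is therefore itself null; in particular $\mm{\nu}(U) = 0$. This is the only place where the potential infiniteness of $\mm{\nu}$ has to be dealt with, and it is handled by second countability rather than by any integrability assumption. With this in hand I would apply Lemma~\ref{lm:MomentMeasureIsSupportingMeasure} to the non-negative measurable function $f = \One_U$: its left-hand condition reads $\int \One_U \dif\mm{\nu} = \mm{\nu}(U) = 0$, so the equivalence in the lemma yields $\int \One_U \dif\mu = \mu(U) = 0$ for $\nu$-almost every $\mu \in \calM_f(X)$. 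By the first observation this is exactly $\supp \mu \subset \supp \mm{\nu}$ for $\nu$-almost every $\mu$.

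I do not anticipate any genuine obstacle here, as the corollary is essentially the specialization of Lemma~\ref{lm:MomentMeasureIsSupportingMeasure} to an indicator function. The only point requiring a moment of care is the verification that $\mm{\nu}(U) = 0$ despite $\mm{\nu}$ possibly being infinite, which the second-countability of $X$ resolves, and the (standard) identification of the inclusion $\supp\mu\subset\supp\mm{\nu}$ with the vanishing of $\mu$ on the open set $U$.
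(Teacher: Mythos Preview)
Your proof is correct and follows exactly the same approach as the paper: apply Lemma~\ref{lm:MomentMeasureIsSupportingMeasure} with $f = \One_{\Complement \supp \mm{\nu}}$. The paper's proof is a single line stating this, whereas you have (correctly and usefully) spelled out why $\mm{\nu}(\Complement \supp \mm{\nu})=0$ despite $\mm{\nu}$ being possibly infinite and why $\mu(\Complement \supp \mm{\nu})=0$ is equivalent to $\supp\mu\subset\supp\mm{\nu}$.
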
 
\begin{proof}
  Use Lemma~\ref{lm:MomentMeasureIsSupportingMeasure} with $f \asn
  \One_{\left(\Complement \supp \mm{\nu}\right)}$.
\end{proof}

The previous corollary shows that the two-level measure $\nu$ is
effectively a finite measure on $\calM_f(\supp\mm{\nu})$, \ie we can
restrict $X$ to $\supp\mm{\nu}$ without losing information about $\nu$
(\cf Definition~\ref{def:m2mSpacesAndEquivalence} and
Remark~\ref{rm:Xisometrictosuppmm} for a precise statement).

\subsection{Push-forward operators}

Let $(X, r)$ and $(Y, d)$ be Polish metric spaces and $g$ be a Borel
measurable function from $X$ to $Y$. As usual, $g_*\mu$ denotes the
push-forward measure $\mu \circ g^{-1}$ for a finite Borel measure
$\mu \in \calM_f(X)$. We regard $g_*$ as an operator
\begin{equation}\label{eq:1lvlPushForward}
  \begin{aligned}
    g_* \from \calM_f(X) &\to \calM_f(Y) \\
    \mu &\mapsto g_*\mu = \mu \circ g^{-1}
  \end{aligned}
\end{equation}
and call $g_*$ the \emph{(one-level) push-forward operator of $g$}. This enables
us to define the \emph{two-level push-forward operator $g_{**}$ of
  $g$} by
\begin{equation}\label{eq:2lvlPushForward}
  \begin{aligned}
    g_{**} \colon \calM_f(\calM_f(X)) &\to \calM_f(\calM_f(Y)) \\
    \nu &\mapsto g_{**}\nu \assign \nu \circ (g_*)^{-1}.
  \end{aligned}
\end{equation}
In this article the function $g$ will usually be an isometry between
$X$ and $Y$. Then, the structure of the push-forward measure $g_*\mu$
is the same as of the original measure $\mu \in \calM_f(X)$. The same
is true for the two-level push-forward measure $g_{**}\nu$ with
$\nu \in \calM_f(\calM_f(X))$.

Let $\phi \colon Y \to \R$ be measurable and let $\mu \in \calM_f(X)$
and $\nu \in \calM_f(\calM_f(X))$. The following transformation
formulas hold true for the push-forward measures $g_*\mu$ and
$g_{**}\nu$ (assuming that the integrals exist):
\begin{equation}\label{eq:1lvlTrafo}
  \int \phi \dif(g_*\mu) = \int \phi \circ g \dif\mu 
\end{equation}
and
\begin{equation}\label{eq:2lvlTrafo}
  \begin{aligned}
    \int_{\calM_f(Y)} \int_{Y} \phi \dif\mu \dif(g_{**}\nu)(\mu)
    &= \int_{\calM_f(X)} \int_Y \phi \dif(g_*\mu) \dif\nu(\mu) \\
    &= \int_{\calM_f(X)} \int_X \phi \circ g \dif\mu \dif\nu(\mu).
  \end{aligned}
\end{equation}

The following lemma summarizes some useful properties of the one-level
and two-level push-forward operator.
\begin{lemma}[Properties of push-forward
  operators]\label{lm:pushforwardproperties}
  Let $(X, d_X), (Y, d_Y)$ be Polish metric spaces and
  $h, g, g_1, g_2, \dotsc$ be measurable functions from $X$ to $Y$.
  Then we have:
  \begin{enumerate}
    \item\label{it:lm:pushforwardproperties_1} If $g$ is continuous,
    then $g_*$ and $g_{**}$ defined as in \eqref{eq:1lvlPushForward}
    and \eqref{eq:2lvlPushForward}, respectively, are continuous.
    \item\label{it:lm:pushforwardproperties_2} If $g_n$ converges
    pointwise to $g$, then $g_{n*}$ and $g_{n**}$ converge pointwise
    to $g_*$ and $g_{**}$, respectively.
    That is, $g_{n*}\mu$ converges weakly to $g_*\mu$ and $g_{n**}\nu$
    converges weakly to $g_{**}\nu$ for every $\mu \in \calM_f(X)$ and
    $\nu \in \calM_f(\calM_f(X))$.
    \item\label{it:lm:pushforwardproperties5} Let
    $\mu \in \calM_f(X)$ and $\epsilon>0$. Define
    $M_\epsilon \asn \set{ x \in X | d_Y( g(x), h(x) ) < \epsilon}$
    and $\delta \asn \mu(\Complement M_\epsilon)$, then we have
    \begin{displaymath}
      \dP( g_*\mu , h_*\mu ) \leq \max(\epsilon, \delta).
    \end{displaymath}
  \end{enumerate}
\end{lemma}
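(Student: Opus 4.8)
The plan is to exploit the self-referential structure of the two-level push-forward: since $g_{**}\nu = \nu \circ (g_*)^{-1} = (g_*)_*\nu$, the operator $g_{**}$ is nothing but the one-level push-forward operator of the map $g_* \from \calM_f(X) \to \calM_f(Y)$. Consequently every assertion about $g_{**}$ in parts \ref{it:lm:pushforwardproperties_1} and \ref{it:lm:pushforwardproperties_2} should follow by applying the corresponding one-level assertion with $g_*$ in the role of $g$. So I would first settle the statements for the one-level operator and then bootstrap. I would also record that, since $\calM_f(X)$ is Polish in the weak topology and hence so is $\calM_f(\calM_f(X))$, the relevant weak topologies are metrizable; continuity and pointwise convergence may therefore be checked along ordinary sequences, and no nets are needed for this lemma.

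For part \ref{it:lm:pushforwardproperties_1}, to see that $g_*$ is continuous I would take a weakly convergent sequence $\mu_n \to \mu$ in $\calM_f(X)$ and a test function $\phi \in \calC_b(Y)$. By the transformation formula \eqref{eq:1lvlTrafo}, $\int \phi \dif(g_*\mu_n) = \int \phi \circ g \dif\mu_n$, and since $g$ is continuous we have $\phi \circ g \in \calC_b(X)$; thus the right-hand side converges to $\int \phi \circ g \dif\mu = \int \phi \dif(g_*\mu)$, giving $g_*\mu_n \to g_*\mu$ weakly. As $g_*$ is then itself a continuous map between Polish spaces, applying the result just obtained with $g_*$ in place of $g$ yields continuity of $(g_*)_* = g_{**}$.

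For part \ref{it:lm:pushforwardproperties_2}, fix $\mu \in \calM_f(X)$ and $\phi \in \calC_b(Y)$. Again by \eqref{eq:1lvlTrafo}, $\int \phi \dif(g_{n*}\mu) = \int \phi \circ g_n \dif\mu$. Pointwise convergence $g_n \to g$ together with continuity of $\phi$ gives $\phi \circ g_n \to \phi \circ g$ pointwise, and the bound $\abs{\phi \circ g_n} \leq \normInfty{\phi}$ is $\mu$-integrable because $\mu$ is finite; dominated convergence then yields $g_{n*}\mu \to g_*\mu$ weakly. This is exactly pointwise convergence $g_{n*} \to g_*$ as maps $\calM_f(X) \to \calM_f(Y)$, so applying the statement just proved to the sequence $(g_{n*})$ converging pointwise to $g_*$ gives $g_{n**}\nu \to g_{**}\nu$ weakly for every $\nu$. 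Here I would keep in mind that measurability of $g_n$ guarantees measurability of $g_{n*}$, so the push-forwards $g_{n**}\nu$ are well defined and the bootstrapping is legitimate.

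For part \ref{it:lm:pushforwardproperties5} I would verify the defining inequalities of the Prokhorov metric directly with $\eta \asn \max(\epsilon, \delta)$. Given a closed set $A \subset Y$, I would split $g^{-1}(A)$ along $M_\epsilon$ and its complement. The key inclusion is $g^{-1}(A) \cap M_\epsilon \subset h^{-1}(B(A,\eta))$: if $g(x) \in A$ and $d_Y(g(x), h(x)) < \epsilon \leq \eta$, then $h(x)$ lies within distance $\eta$ of the point $g(x) \in A$, hence in $B(A,\eta)$. This yields $g_*\mu(A) = \mu(g^{-1}(A)) \leq \mu(h^{-1}(B(A,\eta))) + \mu(\Complement M_\epsilon) = h_*\mu(B(A,\eta)) + \delta \leq h_*\mu(B(A,\eta)) + \eta$, and the symmetric inequality follows at once because $M_\epsilon$ and hence $\delta$ are symmetric in $g$ and $h$. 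By the definition of $\dP$ this bounds $\dP(g_*\mu, h_*\mu)$ by $\eta$. I expect the conceptual content to be the bootstrapping observation $g_{**} = (g_*)_*$, while the most error-prone step is the Prokhorov estimate of part \ref{it:lm:pushforwardproperties5}, where one must keep the open-neighborhood bookkeeping straight and make sure the slack term $\delta$ is correctly absorbed into $\eta$.
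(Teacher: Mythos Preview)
Your proposal is correct and follows essentially the same approach as the paper: part~\ref{it:lm:pushforwardproperties_1} uses the transformation formula with $\phi\circ g\in\calC_b(X)$, part~\ref{it:lm:pushforwardproperties_2} uses dominated convergence, and part~\ref{it:lm:pushforwardproperties5} splits $g^{-1}(A)$ along $M_\epsilon$ and its complement to verify the Prokhorov inequalities with $\max(\epsilon,\delta)$. Your explicit bootstrapping observation $g_{**}=(g_*)_*$ is exactly what the paper does implicitly when it writes $\int F\dif g_{**}\nu_n = \int F\circ(g_*)\dif\nu_n$; the paper in fact omits the proof of part~\ref{it:lm:pushforwardproperties_2} entirely, so your write-up is more complete there.
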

\begin{proof}
  {
    The proof of~\ref{it:lm:pushforwardproperties_1} is
    straightforward with the transformation
    formula~\eqref{eq:1lvlTrafo}: Let $(\mu_n)_n$ converge weakly to
    $\mu$ in $\calM_f(X)$ and let $f \in \calC_b(Y)$ be arbitrary.
    Because $f \circ g$ is continuous and bounded, we get
    \begin{displaymath}
      \int f \dif g_*\mu_n = \int f \circ g \dif \mu_n \to \int
      f \circ g \dif \mu = \int f \dif g_*\mu.
    \end{displaymath}
    Thus, $g_*\mu_n$ converges weakly to $g_*\mu$ and $g_*$ is
    continuous. It follows that $F \circ (g_*)$ is continuous for
    every $F \in \calC_b(\calM_f(Y))$. Therefore, if $(\nu_n)_n$
    converges to $\nu$ in $\calM_f(\calM_f(X))$ we get
    \begin{displaymath}
      \int F \dif g_{**}\nu_n = \int F \circ (g_*) \dif \nu_n \to \int F
      \circ (g_*) \dif \nu = \int F \dif g_{**}\nu
    \end{displaymath}
    and this proves the continuity of $g_{**}$.
  }
  
  {
    Assertion~\ref{it:lm:pushforwardproperties_2} follows in a similar
    way as assertion~\ref{it:lm:pushforwardproperties_1} using the
    transformation formula~\eqref{eq:1lvlTrafo} and dominated
    convergence. We omit the proof.
  }
    
  To show~\ref{it:lm:pushforwardproperties5}, let $A \subset X$ be a
  closed set and let $m \asn \max(\epsilon, \delta)$. Then,
  \begin{align*}
    g_*\mu(A) &= \mu(g^{-1}(A)) \\
    &= \mu(g^{-1}(A) \cap M_\epsilon) + \mu(g^{-1}(A) \cap \Complement
    M_\epsilon) \\
    &\leq \mu(h^{-1}(B(A, \epsilon))) + \delta \\
    &\leq h_*\mu(B(A, m)) + m
  \end{align*}
  and in the same way we can show that
 \begin{align*}
   h_*\mu(A) \leq g_*\mu(B(A, m)) + m.
  \end{align*}
  This holds for every closed set $A \subset X$ and thus
   \begin{displaymath}
     \dP( g_*\mu , h_*\mu ) \leq m = \max(\epsilon, \delta).
    \end{displaymath}
\end{proof}

\subsection{Nets in topological spaces}
\label{ssec:nets}

This subsection is a short introduction to nets. A more comprehensive
survey can be found in \cite{Kelley}. Nets are a generalization of
sequences and the reader not familiar with this topic may safely skip
this part and think of sequences whenever we use nets.

A non-empty set $\calA$ with a partial order $\preceq$ is called
\emph{directed} if every pair $\alpha_1, \alpha_2 \in \calA$ has a
common successor $\alpha \in \calA$ (\ie $\alpha_1 \preceq \alpha$ and
$\alpha_2 \preceq \alpha$). A map $x$ from a directed set
$(\calA, \preceq)$ to a topological space $(X, \tau)$ is called a
\emph{net in $X$}. Similar to sequences we will denote this map by
$(x_\alpha)_{\alpha \in \calA}$ or $(x_\alpha)_{\alpha}$. Observe that
$(\N, \leq)$ is a directed set and that a sequence $(x_n)_{n \in \N}$
is a net with index set $\N$.

We say that the net $(x_\alpha)_{\alpha}$ is \emph{eventually} in a
set $A \subset X$ if there is an $\alpha_0 \in \calA$ such that
$x_\alpha \in A$ for all $\alpha \succeq \alpha_0$. We say that
$(x_\alpha)_{\alpha}$ is \emph{frequently} in $A$ if every
$\alpha_0 \in \calA$ has a successor $\alpha \succeq \alpha_0$ with
$x_\alpha \in A$. Likewise, we say that a net \emph{eventually (resp.
  frequently)} has a certain property if it eventually (resp.
frequently) takes values in the set of elements of $X$ with this
property.

Let $z$ be an element of $X$. A net $(x_\alpha)_{\alpha}$ is said to
converge to $z$ if for every neighborhood $N \subset X$ of $z$ there
is an $\alpha_0 \in \calA$ such that $x_\alpha \in N$ for all
$\alpha \succeq \alpha_0$ (\ie the net is eventually in $N$). We
denote this convergence by $x_\alpha \to z$.
\begin{lemma}
  Let $X$ and $Y$ be topological spaces and $f \from X \to Y$. The
  function $f$ is continuous if and only if for every convergent net
  $x_\alpha \to z$ in $X$ we have $f(x_\alpha) \to f(z)$.
\end{lemma}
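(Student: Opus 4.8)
The plan is to prove the two implications separately, handling the forward direction directly and the reverse direction by contraposition, since the latter is where nets are genuinely needed.

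For the forward implication I would assume $f$ is continuous and take an arbitrary convergent net $x_\alpha \to z$ in $X$. To verify $f(x_\alpha) \to f(z)$, I would fix an arbitrary neighborhood $N \subset Y$ of $f(z)$. By continuity there is an open set $O$ with $f(z) \in O \subseteq N$, so $f^{-1}(O)$ is open, contains $z$, and is contained in $f^{-1}(N)$; hence $f^{-1}(N)$ is a neighborhood of $z$. Since $x_\alpha \to z$, the net is eventually in $f^{-1}(N)$, \ie there is an $\alpha_0$ with $x_\alpha \in f^{-1}(N)$ for all $\alpha \succeq \alpha_0$, and therefore $f(x_\alpha) \in N$ for all such $\alpha$. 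As $N$ was arbitrary, this shows $f(x_\alpha) \to f(z)$. This direction uses only the definition of net convergence recalled above and is routine.

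For the reverse implication I would argue by contraposition: assuming $f$ is \emph{not} continuous, I would construct a single convergent net along which the conclusion fails. Failure of continuity at some point produces a $z \in X$ and a neighborhood $N$ of $f(z)$ such that $f^{-1}(N)$ is not a neighborhood of $z$; equivalently, every neighborhood $U$ of $z$ contains a point $x_U$ with $f(x_U) \notin N$. The key construction is to index a net by the collection $\calN$ of all neighborhoods of $z$, directed by \emph{reverse} inclusion, so that $U_1 \preceq U_2$ means $U_2 \subseteq U_1$; this is directed because for any two neighborhoods $U_1, U_2$ the intersection $U_1 \cap U_2$ is again a neighborhood of $z$ and a common successor. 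Choosing for each $U \in \calN$ a point $x_U \in U$ with $f(x_U) \notin N$ defines a net $(x_U)_{U \in \calN}$.

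I would then check that $x_U \to z$ while the image net does not converge to $f(z)$. For convergence, given any neighborhood $V$ of $z$, every successor $U \succeq V$ satisfies $U \subseteq V$, so $x_U \in U \subseteq V$ and the net is eventually in $V$. On the other hand $f(x_U) \notin N$ for every index $U$, so the image net is never in the neighborhood $N$ of $f(z)$ and hence $f(x_U) \not\to f(z)$. This yields the required counterexample and finishes the contrapositive. The main obstacle — and the only point where nets differ essentially from sequences — is precisely this reverse direction: one must take the neighborhood filter of $z$ as the index set and order it by reverse inclusion. Verifying that this order is directed (via intersections) and that the resulting net converges to $z$ is the crux, and it is exactly the step that would break down for sequences in a general (possibly non-first-countable) topological space, which is why nets are indispensable here.
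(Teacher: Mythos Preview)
Your proof is correct and is the standard textbook argument. The paper does not actually supply a proof of this lemma; it is stated without proof in the preliminaries on nets, with a general reference to Kelley's \emph{General Topology} for the background material.
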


Let $(\calB, \preceq_\calB)$ be another directed set and
$(y_\beta)_\beta$ be another net in $X$. We say that $(y_\beta)_\beta$
is a \emph{subnet} of $(x_\alpha)_\alpha$ if there is a function $T$
from $\calB$ to $\calA$ with $y = x \circ T$ (\ie
$y_\beta = x_{T(\beta)}$ for every $\beta$) and if for every
$\alpha_0 \in \calA$ there is a $\beta_0 \in \calB$ such that
$T(\beta) \succeq \alpha_0$ for every $\beta \succeq_\calB \beta_0$.

Moreover, we call a point $z \in X$ a \emph{cluster point} of
$(x_\alpha)_\alpha$ if for every neighborhood $N \subset X$ of $z$ and
every $\alpha_0 \in \calA$ there is an $\alpha \succeq \alpha_0$ with
$x_\alpha \in N$ (\ie the net is frequently in $N$). It can be shown
that $z$ is a cluster point of $(x_\alpha)_\alpha$ if and only if
there is a subnet converging to $z$.

We call a net \emph{compact} if every subnet has a convergent subnet.
The following lemma is based on \cite[Lemma 2.3]{Topsoe74}.
\begin{lemma}\label{lm:CompactNets}
  Let $C$ be a subset of a regular topological space $X$. The
  following are equivalent:
  \begin{enumerate}
    \item $C$ is relatively compact.
    \item Every net in $C$ has a converging subnet.
    \item Every net in $C$ has a cluster point.
    \item Every net in $C$ is a compact net.
  \end{enumerate}
\end{lemma}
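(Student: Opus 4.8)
The plan is to prove Lemma~\ref{lm:CompactNets}, which characterizes relative compactness of a subset $C$ of a regular topological space $X$ via the convergence behavior of nets in $C$. Let me sketch the four implications I would use.

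=== PROOF PROPOSAL ===

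The plan is to prove the four stated conditions equivalent by establishing a cycle of implications, relying on the standard net-theoretic facts recalled just above the lemma, in particular the equivalence ``$z$ is a cluster point of a net if and only if some subnet converges to $z$.'' Throughout, relative compactness of $C$ means that the closure $\cl C$ is compact. Since a net in $C$ is also a net in $\cl C$, and cluster points of such a net necessarily lie in $\cl C$ (as $\cl C$ is closed), I can freely pass between $C$ and $\cl C$ when convenient.

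\medskip

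I would first show $(1) \Rightarrow (2)$. Assume $C$ is relatively compact, so $\cl C$ is compact, and let $(x_\alpha)_\alpha$ be a net in $C$. By the standard characterization of compactness for nets (every net in a compact space has a cluster point, equivalently a convergent subnet), the net $(x_\alpha)_\alpha$, viewed as a net in the compact space $\cl C$, has a subnet converging to some point of $\cl C$; this is exactly $(2)$. Next, $(2) \Rightarrow (3)$ is immediate from the recalled fact that a point is a cluster point of a net precisely when some subnet converges to it: a converging subnet furnishes a cluster point. For $(3) \Rightarrow (4)$, I would argue that any subnet of a net in $C$ is itself a net in $C$, hence by $(3)$ has a cluster point, and therefore (again by the cluster-point/subnet equivalence) has a convergent subnet; this is precisely the definition of a compact net, giving $(4)$.

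\medskip

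The substantive step, and the main obstacle, is closing the cycle with $(4) \Rightarrow (1)$ (or equivalently $(3) \Rightarrow (1)$). Here I must produce the compactness of $\cl C$ from purely net-theoretic hypotheses, and this is where the regularity assumption on $X$ is essential. The idea is: to show $\cl C$ is compact it suffices to show every net in $\cl C$ has a cluster point in $\cl C$. Given a net $(z_\beta)_\beta$ in $\cl C$, each $z_\beta$ lies in the closure of $C$, so I can approximate it by points of $C$ and, using regularity, build an associated net $(x_\gamma)_\gamma$ in $C$ that shares the cluster points of $(z_\beta)_\beta$ up to the topology. Concretely, for each $\beta$ and each neighborhood $N$ of $z_\beta$ one picks $x_{(\beta,N)} \in C \cap N$, directing the index set by the product order on pairs $(\beta, N)$; regularity is what guarantees that a cluster point of this auxiliary net in $C$ is also a cluster point of the original net $(z_\beta)_\beta$, because one can separate a point from a closed set not containing it by disjoint open sets. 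Applying hypothesis $(3)$ (which follows from $(4)$ along the already-established part of the cycle) to $(x_\gamma)_\gamma$ yields a cluster point $z \in \cl C$, and the regularity argument transfers it to $(z_\beta)_\beta$. Once every net in $\cl C$ has a cluster point, $\cl C$ is compact by the net characterization of compactness, giving $(1)$.

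\medskip

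I expect the bookkeeping of the auxiliary net and its directed index set to be the delicate part, together with the careful use of regularity to transfer cluster points from the approximating net in $C$ back to the given net in $\cl C$; the other three implications are essentially formal consequences of the definitions and the recalled subnet/cluster-point equivalence. Since the lemma is explicitly attributed to \cite[Lemma 2.3]{Topsoe74}, I would follow that reference for the precise construction of the approximating net and cite it for the details of the regularity argument.
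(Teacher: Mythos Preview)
The paper does not give its own proof of this lemma; it simply attributes the result to \cite[Lemma~2.3]{Topsoe74} and states it without argument. Your sketch is therefore more than what the paper provides, and it is essentially correct: the implications $(1)\Rightarrow(2)\Rightarrow(3)\Rightarrow(4)$ are formal, and you correctly identify $(3)\Rightarrow(1)$ (equivalently $(4)\Rightarrow(1)$) as the only substantive step, where regularity is needed to pass from cluster points of nets in $C$ to compactness of $\cl C$. Your plan to approximate a net in $\cl C$ by an auxiliary net in $C$ and transfer cluster points back via regularity is the right idea, and your caveat that the directed-set bookkeeping is the delicate part is well placed (the product order you describe needs care, since the neighborhood component depends on $\beta$). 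Since you already intend to cite \cite{Topsoe74} for those details, your proposal is entirely in line with how the paper handles the lemma.
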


We define the limit superior of a real-valued net $(x_\alpha)_\alpha$
by
\begin{align*}
  \limsup_\alpha x_\alpha = \lim_\alpha \sup_{\alpha'
  \succeq \alpha} x_{\alpha'},
\end{align*}
\ie it is the limit of the supremum of the tails of the net. The limit
superior is the largest cluster point of the net or $\infty$ if there
is no largest cluster point.

Sometimes we will be concerned with measure-valued nets
$(\mu_\alpha)_\alpha \subset \calM_f(\R)$. We call
$(\mu_\alpha)_\alpha$ \emph{tight} if for every $\epsilon>0$ there
is a compact set $C \subset \R$ such that
$\limsup_{\alpha} \mu_\alpha(\Complement C) < \epsilon$ (\ie
$\mu_\alpha(\Complement C) < \epsilon$ eventually). If
$(\mu_\alpha)_\alpha$ is a compact net (\eg convergent), then it is
also tight.

\section{M2m spaces and the two-level Gromov-weak topology}
\label{sec:m2m-spaces}

In this section we give the basic definitions of metric two-level
measure spaces, equivalence of m2m spaces and the set $\Mtwo$ of
equivalence classes of m2m spaces. Moreover, we define a set $\tfset$
of test functions on $\Mtwo$ and show that $\tfset$ separates points
in $\Mtwo$, \ie that an m2m space $\calX \in \Mtwo$ is determined by
the values $\set{ \Phi(\calX) | \Phi \in \tfset}$. Then we define the
two-level Gromov-weak topology on $\Mtwo$ as the initial topology
induced by $\tfset$.

\begin{definition}[Metric two-level measure spaces and
  equivalences]\label{def:m2mSpacesAndEquivalence}
  \begin{enumerate}
    \item A triple $(X, r, \nu)$ is called a \emph{metric two-level
      measure space (m2m space)} if $X \subset \R^\N$ is non-empty,
    $(X, r)$ is a Polish metric space and
    $\nu \in \calM_f(\calM_f(X))$.
    \item Two m2m spaces $(X, r, \nu)$ and $(Y, d, \lambda)$ are
    called equivalent if there exists a measurable function
    $f \from X \to Y$ such that $\lambda = f_{**}\nu$ and $f$ is
    isometric on the set $\supp\mm{\nu}$ (but not necessarily on the
    whole space $X$). The equivalence between both spaces is denoted
    by $(X, r, \nu) \cong (Y, d, \lambda)$ or by
    $(X, r, \nu) \cong_f (Y, d, \lambda)$ if we want to emphasize that
    $f$ is the measure-preserving isometry.
    \item By $\Mtwo$ we denote the set of all equivalence classes of
    m2m spaces. In the following we will not distinguish between an
    m2m space and its equivalence class. Generic elements of $\Mtwo$
    will be denoted by $\calX = (X, r, \nu)$,
    $\calX_n = (X_n, r_n, \nu_n)$ or $\calY = (Y, d, \lambda)$.
  \end{enumerate}
\end{definition}

\begin{remarks}\label{rm:Xisometrictosuppmm}
  \begin{enumerate}
    \item Note that every Polish metric space is homeomorphic to a
    closed subset of $\R^\N$ (equipped with the product topology) by
    \cite[Corollary 4.3.25]{Engelking}. Therefore, the condition
    $X \subset \R^\N$ is not a restriction and every Polish metric
    space with a two-level measure can be seen as an m2m space, even
    if $X$ is not a subset of $\R^\N$.
    \item Let $(X, r, \nu)$ be an m2m space with
    $S \asn \supp\mm{\nu} \not= \emptyset$. The support of $\nu$ is a
    subset of $\set{\mu \in \calM_f(X) | \supp\mu \subset S}$ by
    Corollary~\ref{cor:SupportOfMomentMeasure}. Thus, $(X, r, \nu)$ is
    equivalent to $(S, r', \nu')$, where $r'$ is the restriction of
    $r$ to $S \times S$ and $\nu'$ is the restriction of $\nu$ to
    $\calM_f(S)$. This holds for every m2m space and to simplify our
    proofs we will often assume (without loss of generality) that
    $X = \supp\mm{\nu}$.
  \end{enumerate}
\end{remarks}

Before we start to define the test functions which shall induce the
two-level Gromov-weak topology, we need to introduce some notation:
Let $\vect{\mu} = (\mu_1, \dotsc, \mu_m) \in \calM_f(X)^m$ and
$\vect{n} = (n_1, \dotsc, n_m) \in \N^m$. We define
\begin{align*}
  \vect{\mu}^{\otimes \vect{n}}\asn \bigotimes_{i=1}^m \mu_i^{\otimes
  n_i} \\
  \intertext{and}
  \vect{\mu}^{\otimes\N} \asn \bigotimes_{i=1}^m \mu_i^{\otimes \N}.
\end{align*}
Moreover, for $\vect{\mu} = (\mu_1, \mu_2, \dotsc) \in \calM_f(X)^\N$
we define
\begin{align*}
  \vect{\mu}^{\otimes \N} \asn \bigotimes_{i=1}^\infty \mu_i^{\otimes
  \N}.
\end{align*}
This notation will shorten our test functions and will be particularly
convenient in the upcoming proofs. By a slight abuse of notation, we
sometimes write $(i, j) \in \vect{n}$, where we regard the vector
$\vect{n}$ as the set
$\set{(i,j) | i \in \set{1, \dotsc, m}, j \in \set{1, \dotsc, n_i}}$.
Moreover, for a Polish metric space $(X, r)$, $m \in \N$ and
$\vect{n} = (n_1, \dotsc, n_m) \in \N^m$ we define the following
\emph{distance operators}
\begin{align*}
  R^{(X, r)}_m &\from X^m \to \Rplus^{m \times m} , &  R^{(X, r)}_m(\vect{x}) &\asn
                                                                                (r(x_i, x_j))_{1 \leq i,j \leq m}   \\
  R^{(X, r)}_{\vect{n}} &\from X^{\vect{n}} \to \Rplus^{\abs{\vect{n}}
                          \times \abs{\vect{n}}}, & R^{(X,
                                                    r)}_{\vect{n}}(\vect{x}) &\asn  (r(x_{ij}, x_{kl}))_{(i, j), (k, l) \in 
                                                                               \vect{n}} \\
  R^{(X, r)}_{\N \times \N} &\from X^{\N \times \N} \to \Rplus^{\N^4},
                                                    & R^{(X,
                                                      r)}_{\vect{n}}(\vect{x}) &\asn  (r(x_{ij}, x_{kl}))_{(i, j), (k, l) \in \N^2},
\end{align*}
{where $\abs{\vect{n}} = \sum_{i=1}^m \abs{n_i}$}. For convenience we
often suppress the super- and subscript in the distance operators
above and simply write $R$ instead of $R^{(X, r)}_m$,
$R^{(X, r)}_{\vect{n}}$ and $R^{(X, r)}_{\N \times \N}$. The space and
dimension should always be clear from the context.

\begin{definition}[Test functions]\label{def:TF}
  Define $\tfset$ as the set of all test functions $\Phi \from \Mtwo
  \to \R$ that are of one of the following forms:
  \begin{align}
    \Phi((X, r, \nu)) &= \chi(\mass{\nu}) \tag{TF1},\label{TF1}
    \vphantom{\int} \\
    \Phi((X, r, \nu)) &= \chi(\mass{\nu}) \int \psi(\mass{\vect{\mu}})
    \dif \normalize{\nu}^{\otimes m}(\vect{\mu}) \tag{TF2},\label{TF2}
    \\
    \Phi((X, r, \nu)) &= \chi(\mass{\nu}) \int \psi(\mass{\vect{\mu}})
    \int \phi \circ R(\matr{x}) \dif \normalize{\vect{\mu}}^{\otimes
      \vect{n}}(\matr{x}) \dif \normalize{\nu}^{\otimes m}(\vect{\mu})
    \tag{TF3},\label{TF3}
  \end{align}
  where $m \in \N$, $\vect{n} = (n_1, \dotsc, n_m) \in \N^m$,
  $\chi \in \calC_b(\Rplus)$, $\psi \in \calC_b(\Rplus^m)$,
  $\phi \in \calC_b(\Rplus^{\abs{\vect{n}} \times \abs{\vect{n}}})$
  with $\chi(0) = 0$ and $\psi(\vect{a}) = 0$ whenever any of the
  components of the vector $\vect{a} \in \Rplus^m$ is 0.
\end{definition}

The test functions are created in such a way that they are bounded on
$\Mtwo$. This is the reason why we need to decompose the measures $\nu$
and $\vect{\mu}$ into the masses $\mass{\nu}$, $\mass{\vect{\mu}}$
and the normalized measures $\normalize{\nu}$ and
$\normalize{\vect{\mu}}$. Later in this section we will define a
topology on $\Mtwo$ such that the test functions are continuous. Thus,
they are a suitable domain for generators of stochastic processes on
$\Mtwo$.

In Theorem~\ref{th:ReconstructionTheorem} we will prove that $\tfset$
separates points in $\Mtwo$. The three types of test functions serve
different purposes for determining an m2m space $(X, r, \nu)$. Test
functions of the form \eqref{TF1} simply determine the mass
$\mass{\nu}$, whereas test functions of the form \eqref{TF2} determine
the normalized mass distribution $\mass{}_*\normalize{\nu}$. The space
$(X, r)$ (more precisely, the support $\supp\mm{\nu}$ equipped with
the restriction of $r$) and the structure of $\normalize{\nu}$ are
determined by the test functions of type \eqref{TF3}.

Note that the set $\tfset$ is closed under multiplication, but not
under addition. Moreover, the functions in $\tfset$ are well-defined,
as we can see in the next lemma.
\begin{lemma}\label{lm:TestFunctionsAreWellDefined}
  Every $\Phi \in \tfset$ is well-defined. That is, we have
  $\Phi(\calX) = \Phi(\calY)$ for equivalent m2m spaces
  $\calX, \calY \in \Mtwo$.
\end{lemma}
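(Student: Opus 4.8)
The plan is to verify the invariance separately for the three forms \eqref{TF1}, \eqref{TF2}, \eqref{TF3}, using throughout that $\calX \cong_f \calY$ means $\lambda = f_{**}\nu$ with $f$ isometric on $S \asn \supp\mm{\nu}$. The two elementary facts I would rely on are that the one-level operator $f_*$ preserves total mass, $\mass{f_*\mu} = \mu(f^{-1}(Y)) = \mu(X) = \mass{\mu}$, and that it is linear and commutes both with normalization ($\normalize{f_*\mu} = f_*\normalize{\mu}$) and with tensor powers. As a consequence the two-level operator $f_{**} = (f_*)_*$ preserves the total mass, $\mass{\lambda} = \mass{f_{**}\nu} = \mass{\nu}$, and commutes with normalization, $\normalize{\lambda} = f_{**}\normalize{\nu}$. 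Form \eqref{TF1} is then immediate. The degenerate case $\mass{\nu} = 0$ is dealt with by the convention $\chi(0) = 0$, which forces both sides to vanish; similarly the convention on $\psi$ makes the integrands in \eqref{TF2} and \eqref{TF3} vanish whenever a sampled measure is the null measure, so the null-measure conventions cause no trouble and I may assume $\mass{\nu} > 0$ below.

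For \eqref{TF2} I would apply the change-of-variables formula \eqref{eq:2lvlTrafo} to the bounded measurable test function $\vect{\mu} \mapsto \psi(\mass{\vect{\mu}})$ on $\calM_f(Y)^m$. Writing $\normalize{\lambda}^{\otimes m} = (f_{**}\normalize{\nu})^{\otimes m} = (f_*^{\times m})_* \normalize{\nu}^{\otimes m}$, where $f_*^{\times m}$ sends $(\mu_1, \dotsc, \mu_m)$ to $(f_*\mu_1, \dotsc, f_*\mu_m)$, the integral over $\calM_f(Y)^m$ becomes the integral over $\calM_f(X)^m$ of $\psi(\mass{(f_*\mu_1, \dotsc, f_*\mu_m)})$ against $\normalize{\nu}^{\otimes m}$. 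Since $\mass{f_*\mu_i} = \mass{\mu_i}$, the argument of $\psi$ is unchanged, and the value of \eqref{TF2} on $\calY$ equals its value on $\calX$.

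For \eqref{TF3} I would perform the same coordinatewise change of variables on the outer integral; this additionally transports the inner sampling measure, since $\normalize{f_*\mu_i} = f_*\normalize{\mu_i}$ together with commutation of push-forward and tensor powers gives $\normalize{(f_*\vect{\mu})}^{\otimes\vect{n}} = (f^{\times\abs{\vect{n}}})_* \normalize{\vect{\mu}}^{\otimes\vect{n}}$, where $f^{\times\abs{\vect{n}}} \from X^{\vect{n}} \to Y^{\vect{n}}$ applies $f$ coordinatewise. By the one-level transformation formula \eqref{eq:1lvlTrafo} the inner integral then reads $\int \phi \circ R^{(Y,d)}(f^{\times\abs{\vect{n}}}(\matr{x})) \dif\normalize{\vect{\mu}}^{\otimes\vect{n}}(\matr{x})$. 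The main obstacle is to replace $R^{(Y,d)}(f^{\times\abs{\vect{n}}}(\matr{x}))$ by $R^{(X,r)}(\matr{x})$, which is exactly where the isometry enters and where one must cope with the fact that $f$ is only assumed isometric on $S$. I would argue that for $\normalize{\nu}^{\otimes m}$-almost every $\vect{\mu}$ each component satisfies $\supp\mu_i \subset S$: this holds $\nu$-almost surely by Corollary~\ref{cor:SupportOfMomentMeasure}, and transfers to $\normalize{\nu}$ since $\normalize{\nu} = \nu/\mass{\nu}$ has the same null sets as $\nu$. For such $\vect{\mu}$ the sampled coordinates $x_{ij}$ lie in $\supp\mu_i \subset S$ for $\normalize{\vect{\mu}}^{\otimes\vect{n}}$-almost every $\matr{x}$, and there $d(f(x_{ij}), f(x_{kl})) = r(x_{ij}, x_{kl})$ for every pair of entries, so $R^{(Y,d)}(f^{\times\abs{\vect{n}}}(\matr{x})) = R^{(X,r)}(\matr{x})$ almost everywhere. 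Hence $\phi \circ R^{(Y,d)}(f^{\times\abs{\vect{n}}}(\matr{x})) = \phi \circ R^{(X,r)}(\matr{x})$ almost everywhere, the inner integrals coincide, and combined with the mass invariance of $\psi(\mass{\vect{\mu}})$ established for \eqref{TF2} this shows the values of \eqref{TF3} agree, completing the proof.
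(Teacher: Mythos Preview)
Your proof is correct and follows essentially the same approach as the paper's: both push the outer integral through $f_{**}$ (respectively $f_*^{\times m}$), use mass preservation of $f_*$ to handle $\chi$ and $\psi$, and then invoke the isometry of $f$ on $\supp\mm{\nu}$ to identify $R^{(Y,d)}\circ f^{\times\abs{\vect{n}}}$ with $R^{(X,r)}$ inside the inner integral. You are more explicit than the paper about the null-measure conventions and about why the sampled points lie in $\supp\mm{\nu}$ almost everywhere (via Corollary~\ref{cor:SupportOfMomentMeasure}), but the argument is the same.
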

\begin{proof}
  Let $\calX = (X, r, \nu) \cong_f \calY = (Y, d, \lambda)$. That is,
  $f \from X \to Y$ is isometric on $\supp \mm{\nu}$ and
  $\lambda = f_{**}\nu$. For $n \in \N$ we define
  $f^{\otimes n} \from X^n \to Y^n$ by
  $f^{\otimes n}(x_1, \dotsc, x_n) = (f(x_1), \dotsc, f(x_n))$.
  Because of the isometric properties of $f$, we have
  $R^{(Y, d)}(f^{\otimes n}(\vect{x})) = R^{(X, r)}(\vect{x})$ for
  $\vect{x} \in (\supp\mm{\nu})^n$. Therefore, we conclude that for
  every $\Phi$ as in \eqref{TF3}
  \begin{align*}
    \Phi((Y, d, \lambda)) &= \Phi((Y, d, f_{**}\nu)) \\
    &= \chi( \mass{f_{**}\nu}) \int \psi(\mass{\vect{\mu}}) \int \phi
      \circ R^{(Y, d)}	(\matr x) \dif \vect{\normalize\mu}^{\otimes \vect
      n}(\matr x)\dif
      \normalize{f_{**}\nu}^{ \otimes m}(\vect\mu) \\
    &= \chi( \mass{\nu} ) \int \psi(\mass{{f^{\otimes n}}_*\vect\mu}) \int \phi
      \circ R^{(Y, d)}	(\matr x) \dif ({f^{\otimes m}}_*\vect{\normalize\mu})^{\otimes
      \vect{n}} (\matr x) \dif \normalize{\nu}^{\otimes m}(\vect \mu) \\
    &= \chi( \mass{\nu} ) \int \psi(\mass{\vect\mu}) \int \phi \circ
      R^{(Y, d)}	(f^{\otimes\abs{\vect{n}}}(\matr x)) \dif \vect{\normalize\mu}^{\otimes \vect n}(\matr
      x) \dif \normalize{\nu}^{\otimes m}(\vect \mu) \\
    &= \Phi((X, r,
      \nu)).
  \end{align*}
  Equality for test functions as in \eqref{TF1} and \eqref{TF2}
  follows in the same way.
\end{proof}

Before we are able to prove that $\tfset$ separates points in $\Mtwo$,
we need some preparatory propositions. Let $X$ be a Polish space and
$\mu$ be a Borel probability measure on $X$. For a sequence
$\vect{x} = (x_i)_{i \in \N} \in X^\N$ and $n \in \N$ we define the
empirical measures
\begin{align*}
  \Xi_n(\vect{x}) \asn \frac{1}{n} \sum_{i = 1}^n \delta_{x_i}.
\end{align*}
The Glivenko-Cantelli Theorem (\cf for example \cite[Theorem
11.4.1]{Dudley02}) states that if $\vect{x}$ is random with law
$\mu^{\otimes \N}$, then almost surely the weak limit
$\wlim_{n \to \infty} \Xi_n(\vect x)$ exists and is equal to $\mu$ .
In other words, the measure $\mu$ can be reconstructed from an
infinite \iid sample. This also holds if $\mu$ is a random probability
measure, as can be seen from the following proposition from
\cite[Theorem 11.2.1]{Dawson93}.
\begin{proposition}[Reconstruction of two-level probability
  measures]\label{prp:deFinetti}
  Let $\nu \in \calM_1(\calM_1(X))$ be a two-level probability measure
  on a non-empty Polish space $X$ and
  $\vect x = (x_i)_{i \in \N} \in X^\N$ be a random sequence with law
  $\int \mu^{\otimes\N} \dif\nu(\mu)$. Then the weak limit
  \begin{displaymath}
    \mu \asn \wlim_{n \to \infty} \Xi_n(\vect x)
   \end{displaymath}
   exists almost surely and the random probability measure $\mu$ has
   law $\nu$.
\end{proposition}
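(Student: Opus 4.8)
The plan is to reduce the statement to the classical Glivenko-Cantelli theorem by conditioning on the ``latent'' random measure drawn from $\nu$ and then transporting the resulting almost-sure statement back to the given random sequence via Fubini's theorem.

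First I would set up the canonical coupling on the product space. Consider $\calM_1(X) \times X^\N$ equipped with the probability measure $Q$ determined by the kernel $\mu \mapsto \mu^{\otimes\N}$, that is $Q(\dif\mu \, \dif\vect{x}) \asn \nu(\dif\mu)\,\mu^{\otimes\N}(\dif\vect{x})$; under $Q$ the first coordinate $\mu$ has law $\nu$ and, conditionally on $\mu$, the second coordinate $\vect{x}$ is distributed as $\mu^{\otimes\N}$. By construction the second marginal of $Q$ is exactly $\int \mu^{\otimes\N}\dif\nu(\mu)$, the law of the random sequence in the statement. Hence any $Q$-almost-sure statement concerning $\vect{x}$ alone translates into an almost-sure statement for the random sequence $\vect{x}$.

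Next I would address measurability. The map $\vect{x} \mapsto \Xi_n(\vect{x})$ is measurable from $X^\N$ to $\calM_1(X)$ for each $n$, and since $\calM_1(X)$ is Polish its weak topology is metrized by the complete Prokhorov metric $\dP$; therefore the set $C \asn \set{\vect{x} \in X^\N | (\Xi_n(\vect{x}))_n \text{ is } \dP\text{-Cauchy}}$ is measurable, and on $C$ the weak limit $\Xi_\infty(\vect{x}) \asn \wlim_n \Xi_n(\vect{x})$ exists and defines a measurable map $\Xi_\infty \from C \to \calM_1(X)$. Consequently the set $A \asn \set{(\mu, \vect{x}) | \vect{x} \in C,\ \Xi_\infty(\vect{x}) = \mu}$ is a measurable subset of $\calM_1(X) \times X^\N$. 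Now I apply Glivenko-Cantelli fibrewise: for each fixed $\mu \in \calM_1(X)$, \cite[Theorem 11.4.1]{Dudley02} gives $\mu^{\otimes\N}(\set{\vect{x} | \Xi_\infty(\vect{x}) = \mu}) = 1$, so the $\mu$-section $A_\mu$ of $A$ has full $\mu^{\otimes\N}$-measure. By Fubini's theorem,
\begin{displaymath}
  Q(A) = \int \mu^{\otimes\N}(A_\mu) \dif\nu(\mu) = \int 1 \dif\nu(\mu) = 1.
\end{displaymath}
Thus $Q$-almost surely the weak limit exists and equals the latent measure $\mu$. Projecting onto the $\vect{x}$-coordinate and using that its marginal is the law of the random sequence proves that $\wlim_n \Xi_n(\vect{x})$ exists almost surely. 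Moreover, on $A$ the functional $\Xi_\infty(\vect{x})$ coincides with the first coordinate, which has law $\nu$ under $Q$; hence the random probability measure $\wlim_n \Xi_n(\vect{x})$ has law $\nu$.

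The main obstacle is purely measure-theoretic bookkeeping: verifying that the convergence set $C$ and the limit functional $\Xi_\infty$ are genuinely Borel measurable, that $\mu \mapsto \mu^{\otimes\N}$ is a measurable kernel so that $Q$ is well-defined, and that $(\mu, \vect{x}) \mapsto (\mu, \Xi_\infty(\vect{x}))$ is jointly measurable, so that $A$ is measurable and Fubini applies. Once these points are in place, the entire probabilistic content is carried by the Glivenko-Cantelli theorem applied on each fibre.
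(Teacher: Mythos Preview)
Your argument is correct. The paper itself does not supply a proof of this proposition; it simply quotes the result from \cite[Theorem 11.2.1]{Dawson93}, having just recalled the Glivenko--Cantelli theorem from \cite[Theorem 11.4.1]{Dudley02}. Your approach --- constructing the canonical coupling $Q(\dif\mu,\dif\vect{x}) = \nu(\dif\mu)\,\mu^{\otimes\N}(\dif\vect{x})$, applying Glivenko--Cantelli on each fibre, and integrating out via Fubini --- is the standard way to obtain this statement and is essentially what lies behind the cited reference. The measurability checks you flag (measurability of $\Xi_n$, of the Cauchy set $C$, of the limit functional $\Xi_\infty$, and of the kernel $\mu\mapsto\mu^{\otimes\N}$) are routine on a Polish space and pose no real obstacle, so there is nothing to add.
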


Our goal is to reconstruct the two-level measure
$\nu \in \calM_1(\calM_1(X))$ from an infinite sample in $X$. To this
end, we need not one random measure $\mu$, but an \iid sequence of
random measures $(\mu_i)_i$. If we sample an infinite \iid sample
$(x_{ij})_j$ with each measure $\mu_i$, then, from all of the sampled
points we can reconstruct first the measures $\mu_i$ and then the
two-level measure $\nu$. To be precise, let $\matr{x} = (x_{ij})_{ij}$ be
an infinite random matrix in $X$ with law
$\int \vect{\mu}^{\otimes\N}(\wildcard)\dif\nu^{\otimes\N}(\vect{\mu})$. Then,
almost surely for each $i \in \N$ the weak limit
$\mu_i \asn \wlim_{n \to \infty}\Xi_n((x_{ij})_j)$ exists and each
random measure $\mu_i$ has law $\nu$. Therefore,
$\vect{\mu} = (\mu_i)_i$ is an infinite \iid sample in $\calM_1(X)$
and $\nu$ is the weak limit of $\Xi_n(\vect{\mu})$ by the
Glivenko-Cantelli Theorem.

This result can easily be extended to measures
$\nu \in \calM_1(\calM_f(X))$ by decomposing finite measures
$\mu \in \calM_f(X)$ into their total mass $\mass{\mu}$ and their
normalized (probability) measure $\normalize{\mu}$, provided that
$\nu(\set{\nullmeasure}) = 0$ (since we cannot sample points with the null
measure $\nullmeasure$). Let us first state a generalization of
Proposition~\ref{prp:deFinetti}. We omit the straightforward proof.
\begin{lemma}\label{lm:deFinettiExtended}
  Let $X$ be a non-empty Polish space and
  $\nu \in \calM_1(\calM_f(X))$ be a two-level measure with
  $\nu(\set{\nullmeasure}) = 0$. Let
  $(m, \vect x) \in \Rplus \times X^\N$ be random with law
  $\int \delta_{\mass{\mu}} \otimes \normalize\mu^{\otimes \N} \dif
  \nu(\mu)$. Then the weak limit
  \begin{displaymath}
    \mu \asn m \cdot \wlim_{n \to \infty}\Xi_n(\vect x)
  \end{displaymath}
  exists almost surely and the random measure $\mu$ has law
  $\nu$.
\end{lemma}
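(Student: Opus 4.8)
The plan is to reduce Lemma~\ref{lm:deFinettiExtended} to Proposition~\ref{prp:deFinetti} by the decomposition of a finite measure into its mass and its normalization. The map $D \from \calM_f(X) \setminus \set{\nullmeasure} \to \Rplus \times \calM_1(X)$ sending $\mu$ to $(\mass{\mu}, \normalize{\mu})$ is a (measurable) bijection onto $(0, \infty) \times \calM_1(X)$ with inverse $(m, p) \mapsto m \cdot p$. Since $\nu(\set{\nullmeasure}) = 0$, the pushforward $\tilde\nu \asn D_*\nu$ is a probability measure on $(0,\infty) \times \calM_1(X)$, and it suffices to reconstruct $\tilde\nu$ from the sample, because the reconstructed pair $(m, \wlim_n \Xi_n(\vect x))$ pushed back through $m \cdot p$ yields $\mu$ and pushing $\tilde\nu$ back yields $\nu$.

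First I would describe the joint law explicitly. The pair $(m, \vect x)$ has law $\int \delta_{\mass{\mu}} \otimes \normalize\mu^{\otimes\N} \dif\nu(\mu)$, which under the change of variables becomes $\int \delta_{m'} \otimes p^{\otimes\N} \dif\tilde\nu(m', p)$ on $\Rplus \times X^\N$. Conditionally on drawing $(m', p)$ from $\tilde\nu$, the first coordinate is the deterministic value $m'$ and the sequence $\vect x$ is an \iid sample from the probability measure $p$. Thus $m = m'$ almost surely, and by the Glivenko--Cantelli Theorem (the one-level, non-random statement invoked just before Proposition~\ref{prp:deFinetti}) applied conditionally on $p$, the empirical measures $\Xi_n(\vect x)$ converge weakly to $p$ almost surely. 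Consequently the stated weak limit exists almost surely and equals $m \cdot \wlim_n \Xi_n(\vect x) = m' \cdot p$.

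It then remains to identify the law of the reconstructed random measure $\mu = m' \cdot p$. Since $(m', p)$ has law $\tilde\nu = D_*\nu$ and $\mu$ is the image of $(m', p)$ under the multiplication map $(m', p) \mapsto m' \cdot p$, which is exactly $D^{-1}$, the law of $\mu$ is $(D^{-1})_* \tilde\nu = (D^{-1})_* D_* \nu = \nu$. This is the desired conclusion.

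The only subtlety worth flagging is measurability: one must check that $D$ and $D^{-1}$ are Borel measurable so that $\tilde\nu$ is well-defined and the identification of pushforwards is legitimate. Measurability of $\mu \mapsto \mass{\mu}$ is immediate, continuity of $\mu \mapsto \normalize{\mu}$ on $\calM_f(X) \setminus \set{\nullmeasure}$ was already noted in the preliminaries, and the multiplication map $(m, p) \mapsto m \cdot p$ is continuous; these suffice. Since the paper explicitly calls this proof \emph{straightforward} and omits it, the expected main obstacle is purely bookkeeping --- correctly conditioning on $(m', p)$ to invoke the non-random Glivenko--Cantelli statement rather than re-deriving the de Finetti-type result --- and there is no genuine analytic difficulty.
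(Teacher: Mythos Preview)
Your proposal is correct and follows precisely the decomposition into mass and normalization that the paper itself suggests in the paragraph preceding the lemma; since the paper omits the proof as straightforward, there is nothing further to compare.
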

Thus, we can reconstruct a randomly chosen finite measure $\mu$ from
its mass and a sequence of \iid samples in $X$. It follows that we can
reconstruct a two-level measure $\nu \in \calM_1(\calM_f(X))$ from a
sample of masses and points in $X$:
\begin{proposition}[Reconstruction of two-level
  measures]\label{prp:ReconstructionFromPointsAndMasses}
  Let $X$ be a non-empty Polish space and let
  $\nu \in \calM_1(\calM_f(X))$ be a two-level measure with
  $\nu(\set{\nullmeasure})=0$. Moreover, let
  $( (m_i)_i, (x_{ij})_{ij} ) \in \R^\N \times X^{\N \times \N}$ be
  random with law
  \begin{displaymath}
    \int \bigotimes_{i=1}^\infty \delta_{\mass{\mu_i}} \otimes
    \vect{\normalize{\mu}}^{\otimes \N} \dif\nu^{\otimes \N}(\vect\mu).
\end{displaymath}
  Then almost surely we have
  \begin{enumerate}
    \item\label{it:prp:ReconstructionFromPointsAndMasses1} The weak limit
    $\mu_i \asn m_i \cdot \wlim_{n \to \infty}\Xi_n((x_{ij})_j)$
    exists for every $i \in \N$ and the random measure
    $\mu_i$ has law $\nu$,
    \item\label{it:prp:ReconstructionFromPointsAndMasses2} The
    two-level measure $\frac{1}{n} \sum_{i=1}^n
    \delta_{\mu_i}$ converges weakly to $\nu$,
    \item\label{it:prp:ReconstructionFromPointsAndMasses3} The sequence
    $(x_{ij})_{i}$ is dense in $\supp
    \mm{\nu}$ for every positive integer $j$.
  \end{enumerate}
\end{proposition}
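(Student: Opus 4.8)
The plan is to bootstrap the single-measure reconstruction result, Lemma~\ref{lm:deFinettiExtended}, up to the two-level setting, treating the three assertions separately and collecting the relevant almost-sure events at the end by countable intersection. Throughout I use that $\mass{\mu_i}<\infty$ always (as $\mu_i\in\calM_f(X)$) and that $\mass{\mu_i}>0$ for $\nu$-almost every $\mu_i$, since $\nu(\set{\nullmeasure})=0$.

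For assertion~\ref{it:prp:ReconstructionFromPointsAndMasses1} I would fix $i\in\N$ and read off the marginal law of the pair $(m_i,(x_{ij})_j)$ from the given joint law. Because the $\mu_k$ are drawn i.i.d.\ from $\nu$ and, conditionally on $\vect\mu$, the data of distinct rows are independent, integrating out all coordinates except the $i$-th yields exactly the law $\int \delta_{\mass\mu}\otimes\normalize\mu^{\otimes\N}\dif\nu(\mu)$ from Lemma~\ref{lm:deFinettiExtended}. That lemma gives, for this fixed $i$, that $\mu_i=m_i\cdot\wlim_n\Xi_n((x_{ij})_j)$ exists almost surely and has law $\nu$; intersecting these countably many events over $i$ produces one event on which all the limits exist. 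I would also record, for later use, that on this event the reconstructed measure equals the originally sampled one: conditionally on $\mu_i\neq\nullmeasure$, Glivenko--Cantelli gives $\wlim_n\Xi_n((x_{ij})_j)=\normalize{\mu_i}$, and multiplying by $m_i=\mass{\mu_i}$ returns $\mu_i$. Hence, almost surely, the reconstructed sequence $(\mu_i)_i$ is precisely the i.i.d.\ sample from $\nu$ we started with.

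Assertion~\ref{it:prp:ReconstructionFromPointsAndMasses2} is then immediate: $\frac1n\sum_{i=1}^n\delta_{\mu_i}=\Xi_n(\vect\mu)$ is the empirical measure of an i.i.d.\ sample from the probability measure $\nu$ on the Polish space $\calM_f(X)$, so by the Glivenko--Cantelli theorem it converges weakly to $\nu$ almost surely. I would simply intersect this event with the one from the previous step.

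The genuinely new work is assertion~\ref{it:prp:ReconstructionFromPointsAndMasses3}, and this is where I expect the main obstacle. For fixed $j$, the variables $(x_{ij})_i$ are i.i.d.\ over $i$ with common law $\eta\asn\int\normalize\mu\dif\nu(\mu)$, a probability measure on $X$. The key step is to identify its support: using $\mu(A)=\mass\mu\,\normalize\mu(A)$ together with $\mass\mu\in(0,\infty)$ for $\nu$-a.e.\ $\mu$, a Borel set $A$ is $\eta$-null iff $\normalize\mu(A)=0$ for $\nu$-a.e.\ $\mu$ iff $\mu(A)=0$ for $\nu$-a.e.\ $\mu$ iff $\mm{\nu}(A)=0$, the last equivalence being Lemma~\ref{lm:MomentMeasureIsSupportingMeasure} with $f=\One_A$. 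Thus $\eta$ and $\mm{\nu}$ have the same null sets, and applying this to open balls gives $\supp\eta=\supp\mm{\nu}$. It then remains to show that an i.i.d.\ sample from $\eta$ is almost surely dense in $\supp\eta$: choosing a countable base $(U_k)_k$ of nonempty relatively open subsets of the separable space $\supp\eta$, each satisfies $\eta(U_k)>0$ by definition of the support, so the second Borel--Cantelli lemma gives that almost surely infinitely many $x_{ij}$ fall into each $U_k$. Intersecting over $k$ and then over $j$ yields one almost-sure event on which $(x_{ij})_i$ meets every $U_k$, hence is dense in $\supp\mm{\nu}$, for every positive integer $j$. Finally, intersecting the almost-sure events from all three assertions gives the full claim.
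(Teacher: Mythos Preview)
Your proposal is correct and follows essentially the same approach as the paper: reduce assertion~\ref{it:prp:ReconstructionFromPointsAndMasses1} to Lemma~\ref{lm:deFinettiExtended} via the marginal law for fixed $i$, deduce assertion~\ref{it:prp:ReconstructionFromPointsAndMasses2} from Glivenko--Cantelli applied to the i.i.d.\ sequence $(\mu_i)_i$, and for assertion~\ref{it:prp:ReconstructionFromPointsAndMasses3} identify the law of $(x_{ij})_i$ as $\eta=\int\normalize\mu\dif\nu(\mu)$ and use that $\supp\eta=\supp\mm{\nu}$. Your treatment is in fact more detailed than the paper's (which asserts the support identification and the density without proof), and your explicit use of Lemma~\ref{lm:MomentMeasureIsSupportingMeasure} and Borel--Cantelli fills in exactly the steps the paper leaves implicit.
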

\begin{proof}
  \begin{enumerate}
    \item If we fix $i\in\N$, then $(m_i, (x_{ij})_{j} )$ has law
    $\int \delta_{\mass{\mu}}\otimes \normalize\mu^{\otimes\N} \dif\nu(\mu)$
    and the claim follows from Lemma~\ref{lm:deFinettiExtended}.
    \item $(\mu_i)_i$ is an \iid sequence of random measures, each
    with law $\nu$, and the claim follows from the Glivenko-Cantelli
    Theorem.
    \item $(x_{ij})_i$ is an \iid sequence of random variables in
    $X$, each with law $\int \normalize\mu
    \dif\nu(\mu)$. Thus, the sequence is almost surely dense in its
    support, which coincides with the support of $\mm{\nu}$.
  \end{enumerate}
\end{proof}
Property \ref{it:prp:ReconstructionFromPointsAndMasses3} of the last
proposition implies that we can even reconstruct $\nu$ \emph{and} $(X,
r)$ if we only know the masses
$(m_i)_i$ of the sampled measures and the distances $r(x_{ij},
x_{kl})$ between all points $x_{ij}$ and $x_{kl}$ with $i, j, k, l \in
\N$. We exploit this fact in the next theorem to show that the test
functions in $\tfset$ separate points in $\Mtwo$.
\begin{theorem}[Test functions separate
  points]\label{th:ReconstructionTheorem}
  $\tfset$ separates points in $\Mtwo$. That is, two m2m spaces $\calX,
  \calY \in \Mtwo$ are equivalent if and only if $\Phi(\calX) =
  \Phi(\calY)$ for every test function $\Phi \in \tfset$.
\end{theorem}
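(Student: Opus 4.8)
The plan is to prove both directions of the equivalence. The "only if" direction is essentially Lemma~\ref{lm:TestFunctionsAreWellDefined}, so the real content is the "if" direction: assuming $\Phi(\calX) = \Phi(\calY)$ for all $\Phi \in \tfset$, I must construct a measure-preserving isometry exhibiting $\calX \cong \calY$. The strategy is to reconstruct each m2m space from the values of the test functions, using Proposition~\ref{prp:ReconstructionFromPointsAndMasses}.

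Let me sketch the key steps. First I would reduce to the case $X = \supp\mm{\nu}$ and $Y = \supp\mm{\lambda}$ by Remark~\ref{rm:Xisometrictosuppmm}. Next, I would handle the mass: test functions of type \eqref{TF1} immediately give $\mass{\nu} = \mass{\lambda}$ (since $\chi$ ranges over all of $\calC_b(\Rplus)$ vanishing at $0$, and these separate points of $\Rplus$). If this common mass is $0$, both spaces are the zero m2m space and are equivalent; otherwise I may normalize and assume $\mass{\nu} = \mass{\lambda} = 1$, so $\nu, \lambda$ are probability measures. I must also verify that the atom at $\nullmeasure$ contributes nothing, so that the reconstruction machinery (which requires $\nu(\set{\nullmeasure}) = 0$) applies after splitting off that atom; here type \eqref{TF2} functions, which determine the normalized mass distribution $\mass{}_*\normalize{\nu}$, pin down the mass of the atom at $\nullmeasure$.

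The heart of the argument is to encode the full reconstruction of Proposition~\ref{prp:ReconstructionFromPointsAndMasses} in the test-function values. The idea is that type \eqref{TF3} functions compute, for each $m$ and $\vect n$, the expectation
\begin{displaymath}
  \int \psi(\mass{\vect{\mu}}) \int \phi \circ R(\matr{x}) \dif \normalize{\vect{\mu}}^{\otimes \vect{n}}(\matr{x}) \dif \normalize{\nu}^{\otimes m}(\vect{\mu}),
\end{displaymath}
which is precisely the joint law of the sampled masses $(\mass{\mu_i})_i$ and the finite distance matrix $R_{\vect n}(\matr x)$ under the sampling measure of Proposition~\ref{prp:ReconstructionFromPointsAndMasses}. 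Since this holds for all $m, \vect n, \chi, \psi, \phi$, and since $\calC_b$ functions are measure-determining on the relevant Polish spaces, the test functions determine the \emph{entire} joint distribution of the masses $(m_i)_i$ and the infinite distance matrix $(r(x_{ij}, x_{kl}))_{i,j,k,l}$ on $\Rplus^\N \times \Rplus^{\N^4}$. As $\calX$ and $\calY$ agree on all test functions, these joint sampling distributions coincide for the two spaces.

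From equality of the sampling laws I would extract the isometry by a standard coupling/measurable-selection argument. Draw a single sample $((m_i)_i, (x_{ij})_{ij})$ and, using the identical distribution, a corresponding sample $((m_i)_i, (y_{ij})_{ij})$ in $Y$ with matching masses and matching distance matrices, i.e.\ $r(x_{ij}, x_{kl}) = d(y_{ij}, y_{kl})$ almost surely for all indices. By Proposition~\ref{prp:ReconstructionFromPointsAndMasses}\ref{it:prp:ReconstructionFromPointsAndMasses3}, the points $(x_{ij})$ are almost surely dense in $\supp\mm{\nu} = X$ and likewise $(y_{ij})$ in $Y$. The distance-preserving bijection $x_{ij} \mapsto y_{ij}$ on these countable dense sets then extends, by completeness and uniform continuity, to a bijective isometry $f \from X \to Y$. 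Finally, by part~\ref{it:prp:ReconstructionFromPointsAndMasses1} and~\ref{it:prp:ReconstructionFromPointsAndMasses2} the reconstructed empirical two-level measures $\frac1n\sum_{i=1}^n \delta_{\mu_i}$ converge to $\nu$ and to $\lambda$ in the respective spaces; because $f$ maps the reconstructed $\mu_i$ on $X$ to the corresponding reconstructed measures on $Y$ (masses and normalized empirical measures are transported exactly), continuity of the two-level push-forward (Lemma~\ref{lm:pushforwardproperties}\ref{it:lm:pushforwardproperties_1}) gives $\lambda = f_{**}\nu$, proving $\calX \cong_f \calY$.

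The main obstacle I expect is the bookkeeping in the coupling step: one must realize the two samples on a common probability space so that masses and \emph{all} pairwise distances agree simultaneously almost surely, and then argue that the resulting map on the dense countable sets is well-defined (independent of the chosen indexing) and genuinely extends to an isometry of the supports. Care is also needed to ensure the measure-determining property of the test functions truly captures the full infinite joint law rather than only finite-dimensional marginals; this requires that the cylinder events generated by finitely many masses and finite distance matrices separate measures on $\Rplus^\N \times \Rplus^{\N^4}$, which holds since these spaces are Polish and the cylinder $\sigma$-algebra is the Borel $\sigma$-algebra.
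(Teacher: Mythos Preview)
Your proposal is correct and follows essentially the same route as the paper: use \eqref{TF1} and \eqref{TF2} to match $\mass{\nu}$ and $\mass{}_*\nu$, reduce to probability measures with no atom at $\nullmeasure$, then argue that \eqref{TF3} functions determine the finite (and hence, by a projective-limit argument, the infinite) joint law of sampled masses and distance matrices, and finally invoke Proposition~\ref{prp:ReconstructionFromPointsAndMasses} to obtain coupled samples $(\vect m, \matr x)$ and $(\vect m, \matr y)$ with identical distance matrices from which the isometry $x_{ij}\mapsto y_{ij}$ is extended by density and shown to satisfy $\lambda=\bar f_{**}\nu$ via continuity of the push-forward. The paper's proof is exactly this argument, stated slightly more tersely (it writes the equality of the measures $\int \bigotimes_i \delta_{\mass{\mu_i}}\otimes R_*\normalize{\vect\mu}^{\otimes\vect n}\dif\nu^{\otimes m}$ and then ``takes the projective limit'' without further comment), so the obstacles you flag are precisely the details that are being suppressed there as well.
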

\begin{proof}
  We have already shown in Lemma \ref{lm:TestFunctionsAreWellDefined}
  that the value of a test function is the same for equivalent m2m
  spaces. To prove the other direction, let $\calX = (X, r,
  \nu)$ and $\calY = (Y, d, \lambda)$ be such that $\Phi(\calX) =
  \Phi(\calY)$ for every $\Phi \in
  \tfset$. We exclude the trivial case $\nu = \lambda =
  \nullmeasure$. Because $\Phi(\calX) =
  \Phi(\calY)$ for every test function as in \eqref{TF1} and
  \eqref{TF2}, we conclude that $\mass{\nu} =
  \mass{\lambda}$ and $\mass{}_*\nu = \mass{}_*\lambda$.
  In particular we have $\nu(\set{\nullmeasure}) =
  \lambda(\set{\nullmeasure})$. Without loss of generality we may assume
  that $\nu$ and
  $\lambda$ are \emph{probability} measures with $\nu(\set{\nullmeasure}) =
  \lambda(\set{\nullmeasure}) = 0$. Now, from the equality for all $\Phi \in
  \tfset$ it follows that
  \begin{align*}
    \int \bigotimes_{i=1}^m \delta_{\mass{\mu_i}} \otimes
    R^{(X, r)}_*\normalize{\vect\mu}^{\otimes\vect{n}} \dif\nu^{\otimes m}(\vect \mu) =
    \int \bigotimes_{i=1}^m \delta_{\mass{\mu_i}} \otimes
    R^{(Y, d)}_*\normalize{\vect\mu}^{\otimes\vect{n}} \dif\lambda^{\otimes m}(\vect \mu)
  \end{align*}
  for all $m \in \N$ and $\vect{n} \in
  \N^m$. By taking the projective limit we get
  \begin{align*}
    \int \bigotimes_{i=1}^\infty \delta_{\mass{\mu_i}} \otimes
    R^{(X, r)}_*\normalize{\vect\mu}^{\otimes\N} \dif\nu^{\otimes \N}(\vect \mu) =
    \int \bigotimes_{i=1}^\infty \delta_{\mass{\mu_i}} \otimes
    R^{(Y, d)}_*\normalize{\vect\mu}^{\otimes\N} \dif\lambda^{\otimes \N}(\vect \mu).
  \end{align*}
  Together with
  Proposition~\ref{prp:ReconstructionFromPointsAndMasses} this implies
  that there are $\vect m \in \Rplus^\N$, $\matr x \in X^{\N \times
    \N}$ and $\matr y \in Y^{\N \times \N}$ such that $(\vect{m},
  R^{(X, r)}(\matr{x})) = (\vect{m}, R^{(Y,
    d)}(\matr{y}))$ and that both $(\vect{m},
  \matr{x})$ and $(\vect{m},
  \matr{y})$ satisfy the
  properties~\ref{it:prp:ReconstructionFromPointsAndMasses1}
  to~\ref{it:prp:ReconstructionFromPointsAndMasses3} from
  Proposition~\ref{prp:ReconstructionFromPointsAndMasses}. If we
  define a function $f$ by $f(x_{ij}) = y_{ij}$ for all $i,j \in
  \N$, then
  $f$ is isometric and can be extended continuously to an isometric
  function $\bar f \from \supp\mm{\nu} \to
  Y$. Since we can reconstruct $\nu$ from $(\vect m, \matr
  x)$ and $\lambda$ from $(\vect m, \matr
  y)$ by taking weak limits and $\bar{f}_{*}$,
  $\bar{f}_{**}$ are continuous (\cf
  Lemma~\ref{lm:pushforwardproperties}), we see that $\lambda =
  \bar{f}_{**}\nu$.
\end{proof}

\begin{remark}\label{rm:MtwoCannotBeEmbedded}
  In the preceding proof we reconstruct an m2m space $(X, r,
  \nu)$ with $\nu(\set{\nullmeasure})=0$ and $\mass{\nu} =
  1$ from the measure
  \begin{align*}
    \int \bigotimes_{i=1}^\infty \delta_{\mass{\mu_i}} \otimes
    R_*\normalize{\vect\mu}^{\otimes\N} \dif\nu^{\otimes \N}(\vect \mu)
  \end{align*}
  in $\calM_1(\Rplus^\N \times
  \Rplus^{\N^4})$. One might be tempted to think that a general m2m
  space $(X, r, \nu)$ is determined by the measure
  \begin{align}\label{eq:rm:MtwoCannotBeEmbedded}
    \mass{\nu} \cdot \int \bigotimes_{i=1}^\infty \delta_{\mass{\mu_i}} \otimes
    R_*\normalize{\vect{\mu}}^{\otimes\N} \dif\normalize{\nu}^{\otimes
    \N}(\vect{\mu})
  \end{align}
  in $\calM_f(\Rplus^\N \times
  \Rplus^{\N^4})$. However, the association between an m2m space and
  the measure \eqref{eq:rm:MtwoCannotBeEmbedded} is not unique: For
  instance, \eqref{eq:rm:MtwoCannotBeEmbedded} is equal to
  $\nullmeasure$ for every m2m space $(\emptyset, 0,
  c\delta_\nullmeasure)$ with $c \geq 0$, even though they are
  different m2m spaces (\cf Remark~\ref{rm:Xisometrictosuppmm}).
\end{remark}

{
  Since the test functions in $\tfset$ separate points in $\Mtwo$, we
  can use them to induce a topology on $\Mtwo$ which satisfies the
  Hausdorff separation axiom.
  }
\begin{definition}[Two-level Gromov-weak topology]
  The \emph{two-level Gromov-weak topology} is the initial topology on
  $\Mtwo$ induced by the test functions in $\tfset$. That is, the
  two-level Gromov-weak topology is the coarsest topology on $\Mtwo$
  such that the functions in $\tfset$ are continuous. We denote this
  topology by $\tautGw$.

  We say that a net of m2m spaces
  $(\calX_\alpha)_\alpha \subset \Mtwo$ \emph{converges two-level
    Gromov-weakly} to an m2m space $\calX \in \Mtwo$ if it converges
  in the two-level Gromov-weak topology, that is, if
  $\Phi(\calX_\alpha) \to \Phi(\calX)$ for every test function
  $\Phi \in \tfset$. We denote this by $\calX_\alpha \tGwto \calX$.
\end{definition}

\begin{remark}[Weak convergence implies two-level Gromov-weak
  convergence]\label{rm:WeakConvImpliestGwConv}
  Weak convergence of two-level measures implies two-level Gromov-weak
  convergence of the associated m2m spaces. That is, if $(X, r)$ is a
  fixed Polish metric space, then the function
  \begin{equation}\label{eq:rm:WeakConvImpliestGwConv1}
    \begin{aligned}
      \calM_f(\calM_f(X)) &\to \R \\
      \nu &\mapsto \Phi((X, r, \nu))
    \end{aligned}
  \end{equation}  
  is continuous for every $\Phi \in \tfset$ and therefore
  $\nu_n \wto \nu$ implies that $(X, r, \nu_n)_n$ converges two-level
  Gromov-weakly to $(X ,r, \nu)$.

  {
    To see that this is true, let $\Phi$ be a test function of the
    form~\eqref{TF3} (the arguments for the simpler cases~\eqref{TF1}
    and \eqref{TF2} are similar) with $m$, $\vect{n}$, $\chi$, $\psi$,
    $\phi$ as in Definition~\ref{def:TF}. The function
    \begin{equation}\label{eq:rm:WeakConvImpliestGwConv2}
      \begin{aligned}
        \calM_f(X)^m &\to \R \\
        \vect{\mu} &\mapsto \psi(\mass{\vect{\mu}}) \int \phi \circ R
        \dif\normalize{\vect{\mu}}^{\otimes \vect{n}}
      \end{aligned}
    \end{equation}
    is a composition of continuous functions except for the
    normalizing function $\vect{\mu} \mapsto \normalize{\vect{\mu}}$
    which is only discontinuous on
    $\set{\vect{\mu} = (\mu_1, \dotsc, \mu_m) \in \calM_f(X)^m | \mu_i
      = \nullmeasure \text{ for some } i \in \set{1, \dotsc, m}}$.
    However, this discontinuity is smoothed by the fact that
    $\psi(\vect{a})=0$ whenever any of the components of the vector
    $\vect{a}$ is 0 (\cf~Definition~\ref{def:TF}) and therefore
    \eqref{eq:rm:WeakConvImpliestGwConv2} is a continuous function. In
    a similar manner it can be seen that the function
    \begin{equation}\label{eq:rm:WeakConvImpliestGwConv3}
      \begin{aligned}
        \calM_f(\calM_f(X)) &\to \R \\
        \nu &\mapsto \chi(\mass{\nu}) \int F(\vect{\mu}) \dif
        \normalize{\nu}^{\otimes m}(\vect{\mu})
      \end{aligned}
    \end{equation}
    is continuous for every $F \in \calC_b(\calM_f(X)^m)$ (using the
    fact that $\chi(0)=0$). Putting
    \eqref{eq:rm:WeakConvImpliestGwConv3} and
    \eqref{eq:rm:WeakConvImpliestGwConv2} together yields the
    continuity of \eqref{eq:rm:WeakConvImpliestGwConv1}.
  }
\end{remark}

\begin{remark}
  The theory of m2m spaces can be seen as a generalization of the
  theory of metric measure spaces (mm spaces). Let $(X, r, \mu)$,
  $(X_1, r _1, \mu_1)$, $(X_2, r_2, \mu_2), \dotsc$ be mm
  spaces. Then we have
  \begin{enumerate}
    \item $(X_1, r_1, \mu_1)$ and $(X_2, r_2, \mu_2)$ are equivalent
    as mm spaces if and only if $(X_1, r_1, \delta_{\mu_1})$ and
    $(X_2, r_2, \delta_{\mu_2})$ are equivalent as m2m spaces.
    \item $(X_n, r_n, \mu_n)$ converges Gromov-weakly to $(X, r,
    \mu)$ if and only if $(X_n, r_n, \delta_{\mu_n})$ converges
    two-level Gromov-weakly to $(X, r, \delta_{\mu})$
  \end{enumerate}
\end{remark}

It is shown in Section~\ref{sec:EquivalenceOfBothTopologies} that
$(\Mtwo, \tautGw)$ is a Polish space. Thus, subsets are compact if and
only if they are sequentially compact and functions on $\Mtwo$ are
continuous if and only if they are sequentially continuous. But for
now we cannot use the fact that $(\Mtwo, \tautGw)$ is Polish and our
proofs (for compactness or continuity) must not rely on sequences.

Sometimes it is convenient to work with simpler test functions
than~\eqref{TF3}. For this reason we give equivalent conditions for
two-level Gromov-weak convergence in the following lemma.
\begin{lemma}\label{lm:testfunctions_alternative}
  Let $(\calX_\alpha)_{\alpha \in \calA}$ be a net of m2m spaces with
  $\calX_\alpha = (X_\alpha, r_\alpha, \nu_\alpha)$ and let
  $\calX = (X, r, \nu)$ be another m2m space. 
  $(\calX_\alpha)_\alpha$ converges two-level Gromov-weakly to $\calX$
  if and only if the following two conditions hold:
  \begin{enumerate}
    \item\label{it:lm:testfunctions_alternative2_2}
    $\mass{}_*\nu_\alpha$ converges weakly to $\mass{}_*\nu$ in
    $\calM_f(\Rplus)$
    \item\label{it:lm:testfunctions_alternative2_3}
    $\tilde{\Phi}(\calX_\alpha)$ converges to $\tilde{\Phi}(\calX)$
    for every $\tilde{\Phi} \from \Mtwo \to \R$ of the form
    \begin{align}\label{TF4}\tag{TF4}
      \tilde{\Phi}((X, r, \nu)) = \int \psi(\mass{\vect{\mu}}) \int
      \phi \circ R \dif \normalize{\vect{\mu}}^{\otimes\vect{n}}
      \dif \nu^{\otimes m}(\vect{\mu})
    \end{align}
    with $m \in \N$, $\vect{n} \in \N^m$,
    $\phi \in \calC_b(\Rplus^{\abs{\vect{n}} \times \abs{\vect{n}}})$,
    and $\psi \in \calC_b(\Rplus^m)$ with $\psi(\vect{a}) = 0$
    whenever any of the components of the vector $\vect{a} \in \Rplus^m$
    is 0.
  \end{enumerate}
\end{lemma}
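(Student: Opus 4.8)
The plan is to pass between the families \eqref{TF3} and \eqref{TF4} using the single identity $\nu^{\otimes m}=\mass{\nu}^m\,\normalize{\nu}^{\otimes m}$ (valid because $\nu=\mass{\nu}\,\normalize{\nu}$), together with convergence of the total masses $\mass{\nu_\alpha}\to\mass{\nu}$. For fixed $m$, $\vect{n}$, $\psi$, $\phi$ as in Definition~\ref{def:TF} set
\begin{align*}
  G(\nu)\asn\int\psi(\mass{\vect{\mu}})\int\phi\circ R
  \dif\normalize{\vect{\mu}}^{\otimes\vect{n}}\dif\normalize{\nu}^{\otimes m}(\vect{\mu}),
\end{align*}
which is bounded by $\normInfty{\psi}\normInfty{\phi}$ uniformly in $\nu$. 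Then the \eqref{TF3}-function with multiplier $\chi$ equals $\chi(\mass{\nu})\,G(\nu)$, while the \eqref{TF4}-function equals $\mass{\nu}^m\,G(\nu)$. Since \eqref{TF2} is exactly the special case $\phi\equiv1$ of \eqref{TF3}, it suffices throughout to treat the families \eqref{TF1} and \eqref{TF3}. The recurring device is that once $\mass{\nu_\alpha}\to\mass{\nu}$ is known, then for any $M>\mass{\nu}$ we eventually have $\mass{\nu_\alpha}<M$, so on this range the bounded admissible multipliers $t\mapsto(t\wedge M)^m$ and $t\mapsto t\wedge M$ coincide with the monomials $t^m$ and $t$ and may legitimately be inserted into \eqref{TF3}.

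For the ``only if'' direction, assume $\calX_\alpha\tGwto\calX$. Applying \eqref{TF1} with $\chi(t)=t\wedge M$ gives $\mass{\nu_\alpha}\wedge M\to\mass{\nu}$; choosing $M>\mass{\nu}$ this forces $\mass{\nu_\alpha}\to\mass{\nu}$ (on a subnet with $\mass{\nu_\alpha}\ge M$ one would have $\mass{\nu_\alpha}\wedge M=M\ne\mass{\nu}$). To obtain~\ref{it:lm:testfunctions_alternative2_2} I show $\int g\dif\mass{}_*\nu_\alpha\to\int g\dif\mass{}_*\nu$ for every $g\in\calC_b(\Rplus)$ by splitting $g=g(0)+\tilde{g}$ with $\tilde{g}(0)=0$: the constant part contributes $g(0)\mass{\nu_\alpha}\to g(0)\mass{\nu}$, and $\int\tilde{g}(\mass{\mu})\dif\nu_\alpha=\mass{\nu_\alpha}\int\tilde{g}(\mass{\mu})\dif\normalize{\nu_\alpha}$ coincides, once $\mass{\nu_\alpha}<M$, with the \eqref{TF2}-value having $m=1$, $\psi=\tilde{g}$ and multiplier $\chi(t)=t\wedge M$, hence converges. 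For \ref{it:lm:testfunctions_alternative2_3}, feed $\chi(t)=(t\wedge M)^m$ into \eqref{TF3}; once $\mass{\nu_\alpha}<M$ its value is $\mass{\nu_\alpha}^m\,G(\nu_\alpha)$, which is the \eqref{TF4}-value, so \eqref{TF4}-convergence follows.

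For the converse, assume \ref{it:lm:testfunctions_alternative2_2} and \ref{it:lm:testfunctions_alternative2_3}. Testing \ref{it:lm:testfunctions_alternative2_2} against the constant function $1$ yields $\mass{\nu_\alpha}\to\mass{\nu}$, and hence \eqref{TF1}-convergence for all continuous $\chi$. For \eqref{TF3} I distinguish two cases. If $\mass{\nu}>0$, then eventually $\mass{\nu_\alpha}>0$ and the \eqref{TF3}-value equals $\chi(\mass{\nu_\alpha})\,\mass{\nu_\alpha}^{-m}$ times the \eqref{TF4}-value; since $\chi(\mass{\nu_\alpha})\to\chi(\mass{\nu})$, $\mass{\nu_\alpha}^{-m}\to\mass{\nu}^{-m}$ and \ref{it:lm:testfunctions_alternative2_3} supplies the remaining factor, the product converges to the \eqref{TF3}-value at $\calX$. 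If $\mass{\nu}=0$, then the target is $\chi(0)\,G(\nu)=0$, whereas $\abs{\chi(\mass{\nu_\alpha})\,G(\nu_\alpha)}\le\abs{\chi(\mass{\nu_\alpha})}\,\normInfty{\psi}\normInfty{\phi}\to0$ because $\chi(0)=0$ and $\mass{\nu_\alpha}\to0$. In both cases \eqref{TF3}-convergence holds, and with it \eqref{TF2}-convergence.

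The routine part is this algebra; the care lies in the two degeneracies of the normalization $\mu\mapsto\normalize{\mu}$. The first is the possible atom of $\nu$ at the null measure $\nullmeasure$: because $\psi$ vanishes on the coordinate hyperplanes, \eqref{TF4} (and hence $G$) is blind to it, which is precisely why the total-mass information carried by \ref{it:lm:testfunctions_alternative2_2} must be imposed separately and cannot be recovered from \ref{it:lm:testfunctions_alternative2_3} alone. The second is the case $\mass{\nu}=0$, dispatched through $\chi(0)=0$ and the uniform bound on $G$. The enabling step throughout is the early extraction of $\mass{\nu_\alpha}\to\mass{\nu}$, which both licenses replacing the admissible bounded multipliers by the monomials $t^m$ on the eventually bounded range of masses and justifies dividing by $\mass{\nu_\alpha}^m$ in the non-degenerate case.
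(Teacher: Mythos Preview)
Your proof is correct and follows essentially the same approach as the paper: both pass between \eqref{TF3} and \eqref{TF4} via the identity $\nu^{\otimes m}=\mass{\nu}^m\,\normalize{\nu}^{\otimes m}$ and the choice of a multiplier $\chi$ that coincides with $t^m$ on the (eventually) bounded range of masses. The paper's proof is terse---it simply picks $\chi(x)=x^m$ on $[0,\mass{\nu}+1]$ and declares the converse ``obvious''---whereas you spell out the details, including the degenerate case $\mass{\nu}=0$ and the extraction of $\mass{\nu_\alpha}\to\mass{\nu}$, which is a fair expansion of what the paper leaves implicit.
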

Note that in \eqref{TF4} we use $\nu$ instead of the normalized
version $\normalize{\nu}$.
\begin{proof}
  \ref{it:lm:testfunctions_alternative2_2} is equivalent to
  convergence of all test functions of the form \eqref{TF1} and
  \eqref{TF2}. \ref{it:lm:testfunctions_alternative2_3} follows from
  the convergence of functions of the form \eqref{TF3} by choosing the
  function $\chi \in \calC_b(\Rplus)$ such that $\chi(x) = x^m$ for
  $x \in [0, \mass{\nu}+1]$. The other direction is obvious.
\end{proof}

\begin{remark}\label{rm:MassIs2lvlGromovWeakContinuous}
  The previous lemma shows that the functions $(X, r, \nu) \mapsto
  \mass{\nu}$ and $(X, r, \nu) \mapsto \mass{}_*\nu $ are continuous
  on $\Mtwo$ in the two-level Gromov-weak topology.
\end{remark}

\section{The two-level Gromov-Prokhorov metric}
\label{sec:d2GP-metric}

In this section we define the two-level Gromov-Prokhorov distance
$\dtGP$ on the space $\Mtwo$. At the end of this section we prove that
$\dtGP$ is indeed a metric and that $(\Mtwo, \dtGP)$ is a Polish
metric space. The topology induced by the metric $\dtGP$ will turn out
to be the same as the two-level Gromov-weak topology (\cf
Section~\ref{sec:EquivalenceOfBothTopologies}).

\begin{definition}[Two-level Gromov-Prokhorov metric]\label{def:2GPmetric}
  Let $\calX = (X, r, \nu)$ and $\calY = (Y, d, \lambda)$ be two m2m
  spaces. We define the \emph{two-level Gromov-Prokhorov distance}
  $\dtGP(\calX, \calY)$ between $\calX$ and $\calY$ by
  \begin{equation}\label{eq:def:2GPmetric}
    \dtGP(\calX, \calY) \asn \inf_{Z, \iota_X, \iota_Y}
    \dP^{\calM_f(Z)}(\iota_{X**}\nu, \iota_{Y**}\lambda),
  \end{equation}
  where the infimum ranges over all isometric embeddings $\iota_X
  \colon X \to Z$, $\iota_Y \colon Y \to Z$ into a common Polish
  metric space $Z$ and where $\dP^{\calM_f(Z)}$ denotes the Prokhorov
  metric for measures on the space $\calM_f(Z)$. The topology
  induced by this metric is called the \emph{two-level Gromov-Prokhorov
    topology} and denoted by $\tautGP$.
\end{definition}
Recall from Corollary~\ref{cor:SupportOfMomentMeasure} that for any
m2m space $(X, r, \nu)$ the support of $\nu$ is a subset of
\begin{align*}
\set{\mu \in \calM_f(X) | \supp\mu \subset \supp\mm{\nu}}.
\end{align*}
With this fact it is easy to see that the two-level Gromov-Prokhorov
distance between equivalent m2m spaces is zero. Thus, the value of
$\dtGP(\calX, \calY)$ does not depend on the chosen representatives of
the equivalence classes and is well-defined. In Proposition
\ref{prp:dtGPIsPolishMetric} we prove that $\dtGP$ is in fact a metric
and even complete.

Finding a ``good'' space $Z$ and embeddings $\iota_X, \iota_Y$ in
\eqref{eq:def:2GPmetric} can be a challenging task. If $X$ and $Y$
have a similar structure, there might be a natural way to overlap $X$
and $Y$ in such a way that the Prokhorov distance in
\eqref{eq:def:2GPmetric} is small. However, if $X$ and $Y$ are very
different, it might be better to choose $Z$ as the disjoint union
$X \sqcup Y$. Then, the problem of finding an appropriate space $Z$ and
embeddings $\iota_X, \iota_Y$ reduces to finding a metric $r'$ on
$X \sqcup Y$ such that it extends the old metrics $r$ and $d$ and
connects the spaces $X$ and $Y$ in an optimal way. In the next lemma
we prove that this procedure gives the same results as in the original
definition in~\eqref{eq:def:2GPmetric}.
\begin{lemma}[Alternative definition of $\dtGP$]\label{lm:2GPmetricEquiv}
  For all m2m spaces $\calX = (X, r, \nu)$ and
  $\calY = (Y, d, \lambda)$ we have
   \begin{displaymath}
    \dtGP(\calX, \calY) = \inf_{r'}
    \dP^{\calM_f(X \sqcup Y, r')}(\nu, \lambda),
  \end{displaymath}
  where the infimum ranges over all metrics $r'$ on the disjoint union
  $X \sqcup Y$ which extend the metrics $r$ and $d$.
\end{lemma}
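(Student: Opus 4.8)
The plan is to prove the two inequalities ``$\dtGP(\calX,\calY) \le \inf_{r'}\dP^{\calM_f(X\sqcup Y,r')}(\nu,\lambda)$'' and ``$\ge$'' separately, exploiting the natural correspondence between a metric $r'$ on $X\sqcup Y$ and an embedding of $X$ and $Y$ into a common space. Throughout I regard $\nu$ and $\lambda$ as measures on $\calM_f(X\sqcup Y)$ via the canonical inclusions $X,Y \hookrightarrow X\sqcup Y$ (the mild abuse of notation already present in the statement). The one recurring tool is that an isometric embedding $\iota\colon W\to Z$ induces, directly from the definition of the Prokhorov metric, an isometric embedding $\iota_*\colon \calM_f(W)\to\calM_f(Z)$, and hence, one level up, an isometric embedding $\iota_{**}$ on two-level measures; in particular $\iota$ preserves both the one- and the two-level Prokhorov distance.

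For ``$\le$'' I would fix any metric $r'$ on $X\sqcup Y$ extending $r$ and $d$ and let $Z$ be the completion of $(X\sqcup Y,r')$. Then $Z$ is Polish (completion of a separable metric space), and the inclusions $\iota_X\colon X\to Z$, $\iota_Y\colon Y\to Z$ are isometric embeddings since $r'$ extends $r$ and $d$. By the isometry-invariance above, $\dP^{\calM_f(Z)}(\iota_{X**}\nu,\iota_{Y**}\lambda) = \dP^{\calM_f(X\sqcup Y,r')}(\nu,\lambda)$, so this $r'$ furnishes an admissible triple $(Z,\iota_X,\iota_Y)$ in Definition~\ref{def:2GPmetric} of exactly that value. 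Taking the infimum over $r'$ gives the inequality.

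For ``$\ge$'' I would fix an admissible triple $(Z,\iota_X,\iota_Y)$ and pull the metric of $Z$ back to $X\sqcup Y$, setting $\rho(p,q)\asn d_Z(\pi(p),\pi(q))$ where $\pi\colon X\sqcup Y\to Z$ equals $\iota_X$ on $X$ and $\iota_Y$ on $Y$. Then $\rho$ is a \emph{pseudo}metric extending $r$ and $d$, and $\pi$ factors through an isometry of the quotient onto $\pi(X\sqcup Y)\subset Z$, so again by isometry-invariance $\dP^{\calM_f(X\sqcup Y,\rho)}(\nu,\lambda)=\dP^{\calM_f(Z)}(\iota_{X**}\nu,\iota_{Y**}\lambda)$. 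The main obstacle is that $\rho$ need not be a genuine metric: a point of $X$ and a point of $Y$ may have $\rho$-distance $0$ if their images in $Z$ coincide, whereas the infimum in the statement ranges over honest metrics. I would repair this by separating the two blocks: for $\epsilon>0$ define $r'_\epsilon(p,q)\asn\rho(p,q)$ when $p,q$ lie in the same block and $r'_\epsilon(p,q)\asn\rho(p,q)+\epsilon$ when they lie in different blocks. A short case check on the three types of triangle shows $r'_\epsilon$ is a genuine metric extending $r$ and $d$, and by construction $\rho\le r'_\epsilon\le\rho+\epsilon$ pointwise.

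It then remains to show that uniformly $\epsilon$-close ground metrics yield $\epsilon$-close two-level Prokhorov distances. For a single level this is immediate: if $\rho\le\sigma\le\rho+\epsilon$ then $B^{\sigma}(A,s)\subset B^{\rho}(A,s)$ and $B^{\rho}(A,s)\subset B^{\sigma}(A,s+\epsilon)$ for every $A$, whence $\dP^{\rho}(\Theta,\Theta')\le\dP^{\sigma}(\Theta,\Theta')\le\dP^{\rho}(\Theta,\Theta')+\epsilon$ for all finite measures $\Theta,\Theta'$. Applying this once shows the level-one metrics $\dP^{\rho}$ and $\dP^{r'_\epsilon}$ on $\calM_f(X\sqcup Y)$ are again within $\epsilon$ of each other pointwise, and applying it a second time (with these as ground metrics) gives $\dP^{\calM_f(X\sqcup Y,r'_\epsilon)}(\nu,\lambda)\le\dP^{\calM_f(X\sqcup Y,\rho)}(\nu,\lambda)+\epsilon$. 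Combining, $\inf_{r'}\dP^{\calM_f(X\sqcup Y,r')}(\nu,\lambda)\le\dP^{\calM_f(Z)}(\iota_{X**}\nu,\iota_{Y**}\lambda)+\epsilon$ for every admissible triple and every $\epsilon>0$; letting $\epsilon\to 0$ and taking the infimum over triples yields the reverse inequality and hence equality.
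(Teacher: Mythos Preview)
Your proof is correct and follows essentially the same route as the paper: both directions use the natural correspondence between metrics on $X\sqcup Y$ and common embeddings into a target space, with the same $+\epsilon$ perturbation trick (the paper's $r'_\delta$ is exactly your $r'_\epsilon$) to turn the pullback pseudometric into an honest metric. Your version is actually more detailed than the paper's in one respect: the paper simply asserts that $\dP^{\calM_f(X\sqcup Y,r'_\delta)}(\nu,\lambda)<\epsilon+\delta$, whereas you supply the two-step justification (perturbing the ground metric by $\epsilon$ perturbs the level-one Prokhorov metric by at most $\epsilon$, and then the level-two Prokhorov metric by at most $\epsilon$); note also that taking the completion in your ``$\le$'' direction is unnecessary, since $(X\sqcup Y,r')$ is already Polish.
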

{
  Observe that $(X \sqcup Y, r')$ is again a Polish metric space since
  separability and completeness are inherited from the spaces $X$ and
  $Y$: If $X_c$ and $Y_c$ are countable dense subsets of $X$ and $Y$,
  respectively, then $X_c \sqcup Y_c$ is countable and dense in
  $X \sqcup Y$. To show completeness, let $(z_n)_n$ be a Cauchy
  sequence in $(X \sqcup Y, r')$. Then there must be a subsequence
  $(z_n')_n$ with $\set{z_n' | n \in \N} \subset X$ or
  $\set{z_n' | n \in \N} \subset Y$. Since $r'$ extends the complete
  metrics $r$ and $d$, this subsequence must be convergent. Hence the
  original Cauchy sequence $(z_n)_n$ is also convergent.
}

In the preceding lemma we regard $\nu$ and $\lambda$ as elements of
the same space $\calM_f(\calM_f(X \sqcup Y))$ without using the
canonical embeddings $\iota_X \from X \to X \sqcup Y$ and
$\iota_Y \from Y \to X \sqcup Y$. We will continue to use this
abbreviated notation in the remainder of this section whenever it
seems appropriate (\ie when we are embedding spaces into their
disjoint union). In the same manner we will regard points $x \in X$
and $y \in Y$ as elements of $X \sqcup Y$ and write $r'(x,y)$ instead
of $r'(\iota_{X}(x), \iota_{Y}(y))$ (where $r'$ is a metric on
$X \sqcup Y$).

\begin{proof}
  By Definition~\ref{def:2GPmetric} we always have
  \begin{displaymath}
    \dtGP(\calX, \calY) \leq \inf_{r'}
    \dP^{\calM_f(X \sqcup Y, r')}(\nu, \lambda).
  \end{displaymath}
  On the other hand, let $\dtGP(\calX, \calY) < \epsilon$. Then there
  is a Polish metric space $(Z, r_Z)$ and isometric embeddings
  $\iota_X \from X \to Z$, $\iota_Y \from Y \to Z$ such that
  \begin{displaymath}
    \dP^{\calM_f(Z, r_Z)}(\iota_{X**}\nu, \iota_{Y**}\lambda) < \epsilon.
  \end{displaymath}
  For $\delta>0$ define $r'_\delta$ as the metric on $X \sqcup Y$
  which extends $r$ and $d$ and satisfies
  \begin{displaymath}
     r'_\delta(x, y) = \delta + r_Z(\iota_X(x), \iota_Y(y))
  \end{displaymath}
  for every $x \in X$ and $y \in Y$. Then $(X \sqcup Y, r'_\delta)$ is complete and
  separable and we have
  \begin{displaymath}
    \dP^{\calM_f(X \sqcup Y, r'_\delta)}(\nu, \lambda)
    < \epsilon + \delta.
  \end{displaymath}
  Ranging over all possible $\epsilon$ and $\delta$ yields the claim.
\end{proof}

In Lemma~\ref{lm:d2GPconvergence} we will see that a sequence
$(X_n, r_n, \nu_n)$ of m2m spaces converges to a limit m2m space
$(X, r, \nu)$ with respect to $\dtGP$ if and only if we can embed all
the spaces isometrically in a common Polish metric space $Z$ such
that the two-level push-forward of $\nu_n$ converges weakly in
$\calM_f(\calM_f(Z))$ to the two-level push-forward of $\nu$. A similar result
holds for Cauchy sequences of m2m spaces as can be seen in the
following Lemma.
\begin{lemma}[Embedding of sequences of m2m spaces]\label{lm:2GPcauchysequence}
  Let $(\epsilon_n)_n$ be a sequence of positive real numbers and let
  $(\calX_n)_n$ be a sequence of m2m spaces with
  $\calX_n = (X_n, r_n, \nu_n)$ and
  \begin{displaymath}
    \dtGP(\calX_n, \calX_{n+1}) < \epsilon_n
  \end{displaymath}
  for every $n \in \N$. Then there is a Polish metric space $(Z, r_Z)$
  and isometric embeddings $\iota_1, \iota_2, \dotsc$ of $X_1, X_2,
  \dotsc$, respectively, into $Z$ such that
  \begin{displaymath}
    \dP^{\calM_f(Z, r_Z)}(\iota_{n**}\nu_n, \iota_{n+1**}\nu_{n+1}) <
    \epsilon_n
  \end{displaymath}
  for all $n \in \N$.
\end{lemma}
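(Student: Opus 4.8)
The plan is to iteratively glue the spaces together, one at a time, using the alternative definition of $\dtGP$ from Lemma~\ref{lm:2GPmetricEquiv}. The difficulty is that each pairwise glueing produces a metric on a two-space disjoint union $X_n \sqcup X_{n+1}$, but we need a single metric on the infinite disjoint union $Z \asn \bigsqcup_{n} X_n$ that simultaneously witnesses all the pairwise bounds. So the task is essentially a consistent extension problem: build the metric on $Z$ step by step so that the restriction to each consecutive pair realizes (approximately) the given Gromov-Prokhorov bound.

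Let me sketch the construction. For each $n$, since $\dtGP(\calX_n, \calX_{n+1}) < \epsilon_n$, Lemma~\ref{lm:2GPmetricEquiv} gives a metric $r_n'$ on $X_n \sqcup X_{n+1}$ extending $r_n$ and $r_{n+1}$ with $\dP^{\calM_f(X_n \sqcup X_{n+1}, r_n')}(\nu_n, \nu_{n+1}) < \epsilon_n$. I would then define a metric $r_Z$ on the full disjoint union $Z = \bigsqcup_n X_n$ by setting $r_Z$ equal to $r_n$ within each block $X_n$, equal to the cross-distances $r_n'(x,y)$ prescribed above for $x \in X_n$, $y \in X_{n+1}$ in consecutive blocks, and for non-consecutive blocks $X_n, X_m$ with $n < m$ defining the distance as the shortest-path (infimum over chains) value
\begin{displaymath}
  r_Z(x, y) = \inf \sum_{k=n}^{m-1} r_k'(z_k, z_{k+1}),
\end{displaymath}
where the infimum ranges over chains $z_n = x, z_{n+1}, \dotsc, z_m = y$ with $z_k \in X_k$. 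This path-metric construction automatically yields a genuine metric (the triangle inequality is built in), and I must check two things: first, that $r_Z$ restricted to each consecutive pair $X_n \sqcup X_{n+1}$ still equals $r_n'$ (i.e. routing through other blocks never gives a strictly shorter path), which is the point where one typically adds a uniform positive offset $\delta$ to all cross-distances — exactly the device already used in the proof of Lemma~\ref{lm:2GPmetricEquiv} — so that detours through intermediate blocks are strictly longer; and second, that $(Z, r_Z)$ is Polish, which follows as in the remark after Lemma~\ref{lm:2GPmetricEquiv} since separability and completeness are inherited blockwise (a Cauchy sequence in $Z$ is eventually trapped in a single block because distinct blocks are uniformly separated).

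The main obstacle is precisely guaranteeing that the glued metric $r_Z$ does not shrink the consecutive cross-distances, so that the Prokhorov bound survives. If the shortest-path infimum went below $r_n'$, the Prokhorov distance $\dP^{\calM_f(Z, r_Z)}(\iota_{n**}\nu_n, \iota_{n+1**}\nu_{n+1})$ could increase (smaller ambient distances make it harder to satisfy the Prokhorov inequalities), breaking the desired estimate. The fix is to choose, when invoking Lemma~\ref{lm:2GPmetricEquiv}, the metrics $r_n'$ with a built-in separation constant so that every path leaving $X_n$ and returning incurs a cost strictly exceeding any direct cross-distance. One clean way is to add a fixed offset $\delta_n > 0$ to all cross-distances between $X_n$ and $X_{n+1}$, large enough that any detour through a neighboring block is longer than the direct jump; since Lemma~\ref{lm:2GPmetricEquiv} already accommodates such an offset at the cost of $\epsilon + \delta$, I can absorb it by choosing $\delta_n$ small relative to the slack between $\dtGP(\calX_n, \calX_{n+1})$ and $\epsilon_n$.

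Finally, once the single Polish space $(Z, r_Z)$ and the isometric embeddings $\iota_n \from X_n \to Z$ are in place, each $\iota_n$ is isometric because $r_Z$ restricts to $r_n$ on $X_n$, and the restriction of $r_Z$ to $X_n \sqcup X_{n+1}$ coincides with $r_n'$. Therefore the Prokhorov distance computed in the large space equals the one computed in the pairwise glueing,
\begin{displaymath}
  \dP^{\calM_f(Z, r_Z)}(\iota_{n**}\nu_n, \iota_{n+1**}\nu_{n+1})
  = \dP^{\calM_f(X_n \sqcup X_{n+1}, r_n')}(\nu_n, \nu_{n+1}) < \epsilon_n,
\end{displaymath}
which is exactly the claim. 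The only arithmetic to watch is the bookkeeping of the offsets $\delta_n$, but these can be chosen summable and shrinking so as not to disturb the strict inequalities.
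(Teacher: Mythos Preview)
Your overall strategy---glue consecutive pairs via Lemma~\ref{lm:2GPmetricEquiv} and assemble them into a single metric on the infinite disjoint union---is the same idea as the paper's. The paper phrases the glueing inductively (at stage $n+1$ apply Lemma~\ref{lm:2GPmetricEquiv} to the already-built $(Z_n, d_n, \nu_n)$ and $\calX_{n+1}$, obtaining a metric $d_{n+1}$ on $Z_{n+1} = Z_n \sqcup X_{n+1}$ that extends $d_n$), which bypasses your path-metric verification entirely: by construction each $d_{n+1}$ restricts to $d_n$ on $Z_n$, so all earlier Prokhorov bounds are preserved automatically. Your concern about offsets is in fact unnecessary even in your formulation---any detour through a neighbouring block can be collapsed using the triangle inequality of the relevant pairwise metric $r_k'$ (which extends both $r_k$ and $r_{k+1}$), so the path metric already restricts to $r_n'$ on each consecutive pair without modification.

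There is, however, a genuine gap in your completeness claim. You assert that a Cauchy sequence in $Z$ is ``eventually trapped in a single block because distinct blocks are uniformly separated''; but uniform separation would require a \emph{uniform} positive lower bound on all cross-block distances, which is incompatible with your later choice of ``summable and shrinking'' offsets $\delta_n$ (forced on you whenever $\epsilon_n \to 0$, the case of interest for Cauchy sequences). A concrete counterexample: take each $X_n$ to be a single point with $r_n'(\ast_n, \ast_{n+1}) = 2^{-n}$; then $(\ast_n)_n$ is Cauchy in $Z$ but has no limit there. The fix is simply to pass to the metric completion of the disjoint union, exactly as the paper does. Separability is immediate, the completion is Polish by definition, and the Prokhorov estimates are unaffected since the embeddings $\iota_n$ remain isometric and the relevant measures are supported on the original blocks.
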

\begin{proof}
  We define $Z_n \asn \bigsqcup_{k=1}^n X_k$ for every $n \in \N$ and
  $Z_\infty \asn \bigsqcup_{k=1}^\infty X_k$. We will inductively
  define metrics $d_n$ on $Z_n$ using Lemma~\ref{lm:2GPmetricEquiv}.
  By this lemma there exists a metric $d_2$ on $Z_2 = X_1 \sqcup X_2$
  such that $(Z_2, d_2)$ is complete and separable and
 \begin{displaymath}
    \dP^{\calM_f(Z_2, d_2)}(\nu_1, \nu_2) < \epsilon_1.
  \end{displaymath}
  This metric extends $r_1$ and $r_2$ and therefore we have $(Z_2,
  d_2, \nu_2) \cong (X_2, r_2, \nu_2)$ and
  \begin{displaymath}
    \dtGP((Z_2, d_2, \nu_2), \calX_3) = \dtGP(\calX_2, \calX_3) < \epsilon_2.
  \end{displaymath}
  By using Lemma~\ref{lm:2GPmetricEquiv} again we find a metric $d_3$
  on $Z_3 = Z_2 \sqcup X_3$ such that $(Z_3, d_3)$ is complete and
  separable and
  \begin{align*}
    \dP^{\calM_f(Z_3, d_3)}(\nu_2, \nu_3) < \epsilon_2.
  \end{align*}
  This procedure can be continued ad infinitum and in this way we get
  a metric $d_\infty$ on $Z_\infty$ as a ``limit''. The metric space
  $(Z_\infty, d_\infty)$ is separable but not necessarily complete.
  For this reason we define $(Z, r_Z)$ as the completion of
  $(Z_\infty, d_\infty)$. It is easily seen that this completion has
  the desired properties with $\iota_n$ as the canonical inclusion of
  $X_n$ into $Z_\infty \subset Z$ for every $n \in \N$.
\end{proof}

\begin{lemma}[Embedding of $\dtGP$-convergent
  sequences]\label{lm:d2GPconvergence}
  Let $(\calX_n)_n$ be a sequence of m2m spaces with
  $\calX_n = (X_n, r_n, \nu_n)$ which converges to an m2m space
  $\calX = (X, r, \nu)$. Then, there is a Polish metric space
  $(Z, r_Z)$ and isometric embeddings
  $\iota, \iota_1, \iota_2, \dotsc$ of $X, X_1, X_2, \dotsc$,
  respectively, into Z such that
  \begin{displaymath}
    \dP^{\calM_f(Z, r_Z)}(\iota_{n**}\nu_n, \iota_{**}\nu) \to 0.
  \end{displaymath}
\end{lemma}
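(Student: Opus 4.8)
The plan is to realize every space inside one common Polish space by gluing each $X_n$ to a \emph{single} fixed copy of the limit space $X$. Since $\dtGP(\calX_n, \calX) \to 0$, Lemma~\ref{lm:2GPmetricEquiv} allows me to choose, for each $n$, a metric $r_n'$ on the disjoint union $X \sqcup X_n$ that extends $r$ and $r_n$ and satisfies
\begin{displaymath}
  \dP^{\calM_f(X \sqcup X_n, r_n')}(\nu_n, \nu) < \dtGP(\calX_n, \calX) + \tfrac{1}{n}.
\end{displaymath}
From the construction in the proof of Lemma~\ref{lm:2GPmetricEquiv} (the offset $\delta$ used there) I may furthermore assume that $r_n'(x, x_n) \geq \delta_n$ for some $\delta_n > 0$ and all $x \in X$, $x_n \in X_n$; this positivity will be needed after the gluing.

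Next I would amalgamate these pairwise metrics along their common subspace $X$. On $Z_\infty \asn X \sqcup \bigsqcup_n X_n$ I define $d$ to restrict to $r$ on $X$, to $r_n$ on each $X_n$ and to $r_n'$ on each pair $X \sqcup X_n$, while for $a \in X_n$ and $b \in X_m$ with $n \neq m$ I set
\begin{displaymath}
  d(a, b) \asn \inf_{x \in X}\bigl(r_n'(a, x) + r_m'(x, b)\bigr),
\end{displaymath}
the length of the cheapest route from $a$ to $b$ passing through the shared space $X$. Positivity of $d$ off the diagonal is immediate from the lower bounds $\delta_n$. As each summand is separable, $(Z_\infty, d)$ is separable, and I take $(Z, r_Z)$ to be its completion, which is Polish; the required isometric embeddings $\iota, \iota_1, \iota_2, \dotsc$ of $X, X_1, X_2, \dotsc$ are then the canonical inclusions into $Z$.

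To finish, observe that by construction each canonical inclusion $j_n \from (X \sqcup X_n, r_n') \to (Z, r_Z)$ is isometric. An isometric embedding of the base space induces a Prokhorov-distance-non-increasing two-level push-forward $j_{n**}$ (this follows directly from the definition of the Prokhorov metric, since the $\epsilon$-neighborhood of a closed set can only grow in the larger space, and the argument applies once on each measure level). Because $j_{n**}\nu_n = \iota_{n**}\nu_n$ and $j_{n**}\nu = \iota_{**}\nu$, this yields
\begin{displaymath}
  \dP^{\calM_f(Z, r_Z)}(\iota_{n**}\nu_n, \iota_{**}\nu) \leq \dP^{\calM_f(X \sqcup X_n, r_n')}(\nu_n, \nu) < \dtGP(\calX_n, \calX) + \tfrac{1}{n} \xrightarrow{n \to \infty} 0,
\end{displaymath}
which is exactly the claim.

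I expect the metric amalgamation to be the main obstacle: one must verify that the ``route through $X$'' prescription really defines a metric on $Z_\infty$ — the only nontrivial case of the triangle inequality being three points in three distinct summands, which is settled by combining the genuine triangle inequalities inside each $(X \sqcup X_n, r_n')$ with the identity $r_n'(x, x') = r(x, x')$ for $x, x' \in X$ — and that $d$ restricts back to each $r_n'$ without creating shortcuts, so that the inclusions $X \sqcup X_n \hookrightarrow Z$ remain isometric. The auxiliary monotonicity of the Prokhorov distance under isometric embeddings is routine and follows directly from its definition, as sketched above.
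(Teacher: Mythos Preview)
Your proposal is correct and follows essentially the same approach as the paper: both constructions glue each $X_n$ to a single fixed copy of the limit space $X$ and take the completion of the resulting disjoint union, with distances between distinct $X_n$ and $X_m$ obtained by routing through $X$. The paper phrases this as an inductive construction (in the spirit of Lemma~\ref{lm:2GPcauchysequence}) while you build the amalgamated metric on $X \sqcup \bigsqcup_n X_n$ in one step, but the resulting space is the same and you supply the verification details (triangle inequality, positivity via the offsets $\delta_n$, monotonicity of the Prokhorov distance under isometric embedding) that the paper leaves implicit.
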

\begin{proof}
  The proof is similar to the proof of
  Lemma~\ref{lm:2GPcauchysequence}, but this time we use $Z_n \asn
  \bigsqcup_{k=1}^n X_k \sqcup X$ and define the metrics $d_n$
  inductively by always ``routing'' through the space $X$.
\end{proof}

With the preceding embedding lemma it is easy to show that
$\tautGP$-convergence implies $\tautGw$-convergence.
\begin{lemma}[$\tautGP$ is finer than
  $\tautGw$]\label{lm:d2GPconvergenceImpliesWeakConvergence}
  Every test function $\Phi \in \tfset$ is continuous with respect
  to the two-level Gromov-Prokhorov topology. Therefore, two-level
  Gromov-Prokhorov convergence implies two-level Gromov-weak
  convergence and $\tautGP$ is finer than $\tautGw$.
\end{lemma}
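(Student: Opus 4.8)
The plan is to exploit the fact that $\tautGP$ is the topology induced by $\dtGP$ and is therefore first countable, so it suffices to verify \emph{sequential} continuity of each test function. Concretely, I would fix $\Phi \in \tfset$ and a sequence of m2m spaces $(\calX_n)_n$ with $\calX_n = (X_n, r_n, \nu_n)$ converging to $\calX = (X, r, \nu)$ with respect to $\dtGP$, and show that $\Phi(\calX_n) \to \Phi(\calX)$. Once this is done for all $\Phi \in \tfset$, the final assertions are immediate: since $\tautGw$ is by definition the coarsest topology making every $\Phi \in \tfset$ continuous, the $\tautGP$-continuity of all these $\Phi$ forces $\tautGw \subseteq \tautGP$, which is exactly the statement that $\tautGP$ is finer than $\tautGw$ and that $\dtGP$-convergence implies two-level Gromov-weak convergence.

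The central step is to move everything into a common ambient space. By Lemma~\ref{lm:d2GPconvergence} there exist a Polish metric space $(Z, r_Z)$ and isometric embeddings $\iota, \iota_1, \iota_2, \dotsc$ of $X, X_1, X_2, \dotsc$ into $Z$ such that $\dP^{\calM_f(Z, r_Z)}(\iota_{n**}\nu_n, \iota_{**}\nu) \to 0$. As the Prokhorov metric metrizes the weak topology on $\calM_f(\calM_f(Z))$, this yields weak convergence $\iota_{n**}\nu_n \wto \iota_{**}\nu$ there. I would then invoke Remark~\ref{rm:WeakConvImpliestGwConv}, applied to the \emph{fixed} space $(Z, r_Z)$: the map $\eta \mapsto \Phi((Z, r_Z, \eta))$ is continuous on $\calM_f(\calM_f(Z))$ in the weak topology, and hence $\Phi((Z, r_Z, \iota_{n**}\nu_n)) \to \Phi((Z, r_Z, \iota_{**}\nu))$.

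It remains to transfer this convergence back to $\calX_n$ and $\calX$ using invariance of the test functions under equivalence. Since $\iota_n \from X_n \to Z$ is an isometric embedding (in particular isometric on $\supp\mm{\nu_n}$) whose induced two-level measure is $\iota_{n**}\nu_n$, we have $\calX_n = (X_n, r_n, \nu_n) \cong_{\iota_n} (Z, r_Z, \iota_{n**}\nu_n)$, and likewise $\calX \cong_{\iota} (Z, r_Z, \iota_{**}\nu)$. By Lemma~\ref{lm:TestFunctionsAreWellDefined} each $\Phi$ is constant on equivalence classes, so $\Phi(\calX_n) = \Phi((Z, r_Z, \iota_{n**}\nu_n))$ and $\Phi(\calX) = \Phi((Z, r_Z, \iota_{**}\nu))$. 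Combining with the previous paragraph gives $\Phi(\calX_n) \to \Phi(\calX)$, as desired.

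I do not expect a genuinely hard step: the substantive work has already been front-loaded into the embedding Lemma~\ref{lm:d2GPconvergence} and into the weak-continuity statement of Remark~\ref{rm:WeakConvImpliestGwConv}, and the present proof merely chains these together. The only points requiring a little care are bookkeeping: checking that an isometric embedding is in particular isometric on the relevant support so that $\iota_n$ genuinely furnishes an equivalence, and noting that the auxiliary space $Z$ need not be realized as a subset of $\R^\N$, which by Remark~\ref{rm:Xisometrictosuppmm} is harmless since the test functions and the notion of equivalence do not depend on that embedding.
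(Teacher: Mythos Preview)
Your proof is correct and follows essentially the same route as the paper's own proof: embed the convergent sequence into a common Polish space via Lemma~\ref{lm:d2GPconvergence}, then apply the weak-continuity of $\eta \mapsto \Phi((Z, r_Z, \eta))$ from Remark~\ref{rm:WeakConvImpliestGwConv}. The paper compresses your equivalence step into a single ``without loss of generality'' phrase, but the argument is the same.
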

\begin{proof}
  Let $\calX = (X, r, \nu), \calX_1 = (X_1, r_1, \nu_1), \dotsc$ be
  m2m spaces with $\dtGP(\calX_n, \calX) \to 0$. By
  Lemma~\ref{lm:d2GPconvergence} we may assume without loss of
  generality that all metric spaces coincide, \ie
  $(Z, r_Z) = (X, r) = (X_1, r_1) = \dotsc$, and that we have
  $\dP^{\calM_f(Z, r_Z)}(\nu_n, \nu) \to 0$. For every
  $\Phi \in \tfset$ the function
  $\lambda \mapsto \Phi((Z, r_Z, \lambda))$ from $\calM_f(\calM_f(X))$
  to $\R$ is continuous with respect to weak convergence (\cf
  Remark~\ref{rm:WeakConvImpliestGwConv}) and thus we get
  $\Phi(\calX_n) \to \Phi(\calX)$.
\end{proof}

Finally, we are able to show that the two-level Gromov-Prokhorov
distance satisfies the axioms of a metric.
\begin{proposition}\label{prp:dtGPIsPolishMetric}
  $\dtGP$ is a metric and $(\Mtwo, \dtGP)$ is a Polish metric space.
\end{proposition}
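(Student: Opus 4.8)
The plan is to verify the three metric axioms and then establish separability and completeness, relying throughout on the embedding lemmas and the separation theorem proved above. Symmetry of $\dtGP$ is immediate from the symmetry of the Prokhorov metric and the symmetric role of the two spaces in \eqref{eq:def:2GPmetric}, non-negativity is clear, and $\dtGP(\calX, \calX) = 0$ follows by embedding $X$ into itself via the identity. For the triangle inequality, given m2m spaces $\calX = (X, r, \nu)$, $\calY = (Y, d, \lambda)$, $\calZ = (W, e, \kappa)$ and $\epsilon > 0$, I would use Lemma~\ref{lm:2GPmetricEquiv} to choose a metric $r_1$ on $X \sqcup Y$ extending $r, d$ with $\dP^{\calM_f(X \sqcup Y, r_1)}(\nu, \lambda) < \dtGP(\calX, \calY) + \epsilon$, and a metric $r_2$ on $Y \sqcup W$ extending $d, e$ with the analogous bound for $\dtGP(\calY, \calZ)$. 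I then glue these to a metric $r_3$ on $X \sqcup Y \sqcup W$ by ``routing through $Y$'', setting $r_3(x, w) \asn \inf_{y \in Y}(r_1(x, y) + r_2(y, w))$ for $x \in X$, $w \in W$ (adding a vanishing offset $\delta$ if necessary to guarantee definiteness, exactly as in the proof of Lemma~\ref{lm:2GPmetricEquiv}). Embedding all three spaces isometrically into the resulting Polish space and applying the triangle inequality for the Prokhorov metric on $\calM_f(\calM_f(X \sqcup Y \sqcup W))$ yields $\dtGP(\calX, \calZ) \leq \dtGP(\calX, \calY) + \dtGP(\calY, \calZ) + 2\epsilon$; letting $\epsilon \to 0$ finishes this step.

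For definiteness, to see that $\dtGP(\calX, \calY) = 0$ forces $\calX \cong \calY$, I would consider the constant sequence $\calX_n \asn \calY$. Since $\dtGP(\calX_n, \calX) = 0 \to 0$, Lemma~\ref{lm:d2GPconvergenceImpliesWeakConvergence} gives $\Phi(\calY) = \Phi(\calX_n) \to \Phi(\calX)$ for every $\Phi \in \tfset$, so $\Phi(\calX) = \Phi(\calY)$ for all test functions. As the test functions separate points by Theorem~\ref{th:ReconstructionTheorem}, we conclude $\calX \cong \calY$, so $\dtGP$ is a genuine metric on $\Mtwo$.

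For completeness, let $(\calX_n)_n$ be a $\dtGP$-Cauchy sequence; since a Cauchy sequence converges as soon as it has a convergent subsequence, it suffices to extract one. Passing to a subsequence I may assume $\dtGP(\calX_n, \calX_{n+1}) < 2^{-n}$, and Lemma~\ref{lm:2GPcauchysequence} then embeds all $X_n$ isometrically into a common Polish space $(Z, r_Z)$ with $\dP^{\calM_f(Z, r_Z)}(\iota_{n**}\nu_n, \iota_{(n+1)**}\nu_{n+1}) < 2^{-n}$. The sequence $(\iota_{n**}\nu_n)_n$ is therefore Cauchy in the complete metric space $(\calM_f(\calM_f(Z)), \dP)$ and converges weakly to some $\nu_\infty$. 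Setting $\calX \asn (Z, r_Z, \nu_\infty)$, which is an m2m space after the harmless re-embedding of $Z$ into $\R^\N$ (cf.\ Remark~\ref{rm:Xisometrictosuppmm}), the inequality $\dtGP(\calX_n, \calX) \leq \dP^{\calM_f(Z, r_Z)}(\iota_{n**}\nu_n, \nu_\infty) \to 0$ shows convergence of the subsequence, and hence of the whole Cauchy sequence.

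For separability I would exhibit a countable dense set: the family of m2m spaces built on a finite set $X = \set{1, \dotsc, N}$ with rational metric, equipped with $\nu = \sum_k q_k \delta_{\mu_k}$ where $q_k \in \Q_{>0}$ and each $\mu_k = \sum_l p_{kl}\delta_{x_{kl}}$ has $p_{kl} \in \Q_{>0}$, $x_{kl} \in X$. Given an arbitrary m2m space, which by Remark~\ref{rm:Xisometrictosuppmm} I may take with $X = \supp\mm{\nu}$, I approximate $\nu$ in the Prokhorov metric on the fixed space $\calM_f(\calM_f(X))$ first by a finite sum of Dirac masses and then approximate each atom $\mu_k$ by a finitely supported measure; since $\dtGP(\calX, \calY) \leq \dP^{\calM_f(X)}(\nu, \lambda)$ whenever both measures live on the same space, this is $\dtGP$-small. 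Restricting to the finitely many points that appear and invoking Lemma~\ref{lm:2GPmetricEquiv} to pass to rational distances and rational weights with small $\dtGP$-error completes the argument. I expect the delicate points to be the gluing construction in the triangle inequality (checking that $r_3$ is genuinely a Polish metric) and the layered approximation in the separability proof; by contrast, definiteness and completeness are essentially formal consequences of the embedding lemmas and the separation theorem.
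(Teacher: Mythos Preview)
Your proposal is correct and follows essentially the same route as the paper: the triangle inequality via gluing two disjoint-union metrics through the middle space (the paper simply cites the construction already done in Lemma~\ref{lm:2GPcauchysequence} rather than writing out the infimum formula), definiteness via the constant sequence and Theorem~\ref{th:ReconstructionTheorem}, completeness via Lemma~\ref{lm:2GPcauchysequence} and completeness of the Prokhorov metric, and separability via finite rational m2m spaces. Your version is in fact slightly more careful in the completeness argument, where you pass to a summable subsequence before invoking Lemma~\ref{lm:2GPcauchysequence}; the paper applies the lemma directly to the full Cauchy sequence and asserts without further comment that the embedded pushforwards are Cauchy.
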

\begin{proof}
  First we prove that $\dtGP$ is indeed a metric on $\Mtwo$. Obviously
  $\dtGP$ is symmetric and non-negative. To prove the triangle
  inequality, let $\calX_i = (X_i, r_i, \nu_i) \in \Mtwo$ for $i \in
  \set{1, 2, 3}$. Let $\epsilon_{1}, \epsilon_{2} > 0$ such
  that
  \begin{align*}
    \dtGP(\calX_1, \calX_2) < \epsilon_{1}  \quad\text{and}\quad 
    \dtGP(\calX_2, \calX_3) < \epsilon_{2}.
  \end{align*}
  Then we can find a metric $d_3$ on $Z_3 \asn X_1 \sqcup X_2 \sqcup
  X_3$ that extends $r_1$, $r_2$ and $r_3$ and satisfies
  \begin{align*}
    \dP^{\calM_f(Z_3, d_3)}(\nu_1, \nu_2) < \epsilon_{1}
    \quad\text{and}\quad
    \dP^{\calM_f(Z_3, d_3)}(\nu_2, \nu_3) < \epsilon_{2}.
  \end{align*}
  Such a metric has already been constructed in the proof of
  Lemma~\ref{lm:2GPcauchysequence}. By the triangle inequality of the
  Prokhorov metric we get
  \begin{align*}
    \dtGP(\calX_1, \calX_3) \leq \dP^{\calM_f(Z_3, d_3)}(\nu_1, \nu_3)
    < \epsilon_{1} + \epsilon_{2}.
  \end{align*}
  The desired triangle inequality for $\dtGP$ follows simply by taking
  the infimum over all possible $\epsilon_{1}$ and $\epsilon_{2}$.

  Now assume that $\dtGP(\calX, \calY) = 0$ for two m2m spaces
  $\calX = (X, r, \nu)$, $\calY = (Y, d, \lambda)$. To prove that both
  spaces are equivalent, it suffices to show that
  $\Phi(\calX) = \Phi(\calY)$ for all test functions $\Phi \in \tfset$
  (\cf Theorem \ref{th:ReconstructionTheorem}). But this follows
  immediately from
  Lemma~\ref{lm:d2GPconvergenceImpliesWeakConvergence} together with
  Lemma~\ref{lm:d2GPconvergence} (by choosing $\calX_n = \calY$ for
  every $n$).

  This shows that $\dtGP$ is indeed a metric. To prove that $\dtGP$ is
  a complete metric, let $\calX_n = (X_n, r_n, \nu_n)$ be a Cauchy
  sequence with respect to $\dtGP$. By
  Lemma~\ref{lm:2GPcauchysequence} we can embed the metric spaces
  $(X_n, r_n)_n$ into a common Polish metric space $(Z, r)$ using
  isometries $(\iota_n)_n$. The two-level push-forward measures
  $(\iota_{n**}\nu_n)_n$ form a Cauchy sequence in
  $\calM_f(\calM_f(Z))$ and thus converge weakly to some
  $\nu \in \calM_f(\calM_f(Z))$. It follows that $(\calX_n)_n$
  converges to the m2m space $(Z, r, \nu)$ with respect to the two-level
  Gromov-Prokhorov metric.

  To show that $(\Mtwo, \dtGP)$ is separable, we define $\S$ as the
  set of all m2m spaces $(X, r, \nu) \in \Mtwo$ such that
  $\abs{X} < \infty$, the metric $r$ takes only rational values and
  \begin{align}\label{eq:prp:dtGPIsPolishMetric1}
    \nu = \sum_{i=1}^{M} a_i \delta_{\left(\sum_{j=1}^{N_i} b_{ij}
      \delta_{x_{ij}}\right)} &\text{ with } x_{ij} \in X,\ a_i, b_{ij} \in
    \Qplus,\ M, N_i \in \N.
  \end{align}
  That is, $(X, r, \nu)$ is a finite m2m space with only rational
  distances and $\nu$ is a finite atomic measure on finite atomic
  measures with only rational values. The set $\S$ is obviously
  countable. To prove density, let $(X, r, \lambda)$ be an arbitrary
  m2m space and let $\epsilon>0$. Because the set of measures of the
  form~\eqref{eq:prp:dtGPIsPolishMetric1} is dense in
  $\calM_f(\calM_f(X))$, there is a $\nu \in \calM_f(\calM_f(X))$ of
  this form with $\dP(\lambda, \nu) < \frac{\epsilon}{2}$. Note that
  $S \asn \supp\mm{\nu}$ is finite, thus $(S, r, \nu)$ is a finite m2m
  space that is $\frac{\epsilon}{2}$-close to $(X, r, \lambda)$. The
  last step is to approximate $r$ by a rational version $r'$ such that
  $\abs{r(x, y) - r'(x, y)} < \frac{\epsilon}{2}$ for all
  $x, y \in S$. Then $(S, r', \nu)$ is in $\S$ and we have
  $\dtGP( (X, r, \lambda), (S, r', \nu)) < \epsilon$.
\end{proof}

\section{Distance distribution and modulus of mass distribution}
\label{sec:DDandMMD}

In this section we define the distance distribution and the modulus
of mass distribution, which have been introduced in \cite{GPW09}. They
are indicators for the complexity of a metric measure space
$(X, r, \mu)$ and will be used in the characterization of compactness
in Section~\ref{sec:compactness}. 
\begin{definition}[Distance distribution and modulus of mass distribution]
  Let $\mu$ be a finite Borel measure on a Polish metric space
  $(X, r)$.
  \begin{enumerate}
    \item The \emph{distance distribution
      $\DD{\mu} \in \calM_f(\Rplus)$ of $\mu$} is defined by
    $\DD{\mu} \asn r_*\mu^{\otimes 2}$.
    \item For $\delta \geq 0$ the \emph{modulus of mass distribution
    $\MMD{\mu}$ of $\mu$} is the number defined by
    \begin{align*}
      \MMD{\mu} \asn \inf\set{ \epsilon>0 | \mu(\set{ x \in X |
          \mu(B(x, \epsilon)) \leq \delta}) \leq \epsilon}.
    \end{align*}
  \end{enumerate}
\end{definition}
The distance distribution reflects the effective diameter of the
support of $\mu$, whereas the modulus of mass distribution measures
the fineness of the measure $\mu$. Heuristically, $\MMD{\mu}$ is the
amount of ``thin points'' of $X$, where we think of $x \in X$ as a
thin point if $\mu(B(x, \epsilon)) \leq \delta$ for given
$\epsilon, \delta > 0$.

Note that the values of the distance distribution and the modulus of
mass distribution coincide for measures from equivalent mm spaces (\cf
\cite[Remark 2.10]{GPW09}).

The distance distribution and the modulus of mass distribution are
constructed for one-level measures $\mu \in \calM_f(X)$. At first sight
they seem to be inappropriate to measure the complexity of a two-level
measure $\nu \in \calM_f(\calM_f(X))$.
However, we can overcome this problem by ``projecting'' the two levels
of $\nu$ to one level, \ie by looking at the first moment measure
$\mm{\nu}(\wildcard) = \int \mu(\wildcard) \dif\nu(\mu)$. As mentioned
earlier, the moment measure of a two-level measure is not necessarily
finite. This is why we will approximate $\nu$ by measures from
$\calM_f(\calM_{\leq K}(X))$ with $K \nearrow \infty$. This
approximation will be introduced in the next section.

In the rest of this section we summarize some useful properties
of the modulus of mass distribution.
\begin{lemma}[Properties of the modulus of mass
  distribution]\label{lm:MMDProperties}
  Let $\mu$ be a finite Borel measure on a Polish metric space
  $(X, r)$.
  \begin{enumerate}
    \item\label{it:lm:MMDProperties_1} The function
    $\delta \mapsto \MMD{\mu}$ is non-decreasing and bounded by the
    total mass $\mass{\mu}$. Moreover, we have
    $\lim_{\delta \searrow 0}\MMD{\mu} = 0$.
    \item\label{it:lm:MMDProperties_2} For $\epsilon, \delta > 0$ we
    have $\MMD{\mu} < \epsilon$ if and only if
    $\mu(\set{x \in X | \mu(B(x, \epsilon)) \leq \delta }) <
    \epsilon$.
    \item\label{it:lm:MMDProperties_3} Let $\epsilon$ and $\delta$ be
    positive real numbers with $\MMD{\mu} < \epsilon$. Then there is a
    finite set $A \subset X$ with
    $\abs{A} \leq \max(1, \frac{\mass{\mu}}{\delta})$ such that
$
      \mu(\Complement B(A, \epsilon)) <  \epsilon.
$
  \end{enumerate}
\end{lemma}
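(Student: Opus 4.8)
Throughout write $T_\epsilon \asn \set{x \in X | \mu(B(x,\epsilon)) \leq \delta}$, so that by definition $\MMD{\mu} = \inf\set{\epsilon > 0 | \mu(T_\epsilon) \leq \epsilon}$. Two elementary monotonicities and one continuity fact drive all three parts: for fixed $\epsilon$ the set $T_\epsilon$ grows with $\delta$; for fixed $\delta$ it shrinks as $\epsilon$ grows; and $\epsilon \mapsto \mu(T_\epsilon)$ is left-continuous. I would establish these first and then read off the three assertions.

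For \ref{it:lm:MMDProperties_1} I argue as follows. If $\delta_1 \leq \delta_2$ then $T_\epsilon^{\delta_1} \subseteq T_\epsilon^{\delta_2}$, so the constraint $\mu(T_\epsilon) \leq \epsilon$ is harder to satisfy for the larger $\delta$; hence the infimum defining $\MMD{\mu}$ is non-decreasing in $\delta$. Since $\mu(T_\epsilon) \leq \mass{\mu}$ always holds, every $\epsilon > \mass{\mu}$ meets the constraint, giving $\MMD{\mu} \leq \mass{\mu}$. For the limit, note that as $\delta \searrow 0$ the sets $T_\epsilon$ decrease to $\set{x \in X | \mu(B(x,\epsilon)) = 0} \subseteq \Complement \supp\mu$, which is $\mu$-null; continuity from above of the finite measure $\mu$ yields $\mu(T_\epsilon) \searrow 0$. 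Thus for each fixed $\epsilon>0$ the constraint holds once $\delta$ is small, so $\lim_{\delta \searrow 0} \MMD{\mu} \leq \epsilon$, and as $\epsilon$ is arbitrary while $\MMD{\mu} \geq 0$, the limit is $0$.

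Part \ref{it:lm:MMDProperties_2} is the statement that the infimum is never attained ``from above,'' and it rests on left-continuity of $\epsilon \mapsto \mu(T_\epsilon)$: since the balls are open, $B(x,\epsilon') \nearrow B(x,\epsilon)$ as $\epsilon' \nearrow \epsilon$, so $\One_{T_{\epsilon'}} \to \One_{T_\epsilon}$ pointwise and dominated convergence gives $\mu(T_{\epsilon'}) \to \mu(T_\epsilon)$. For the forward direction, $\MMD{\mu} < \epsilon$ produces some $\epsilon' < \epsilon$ with $\mu(T_{\epsilon'}) \leq \epsilon'$, and monotonicity in $\epsilon$ (i.e. $T_\epsilon \subseteq T_{\epsilon'}$) gives $\mu(T_\epsilon) \leq \mu(T_{\epsilon'}) \leq \epsilon' < \epsilon$. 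Conversely, if $\mu(T_\epsilon) < \epsilon$, left-continuity lets me pick $\epsilon' < \epsilon$ sufficiently close to $\epsilon$ so that $\mu(T_{\epsilon'}) < \epsilon'$, whence $\MMD{\mu} \leq \epsilon' < \epsilon$.

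For \ref{it:lm:MMDProperties_3} the plan is a greedy covering argument. By part \ref{it:lm:MMDProperties_2}, $\MMD{\mu} < \epsilon$ gives $\mu(T_\epsilon) < \epsilon$, so the ``fat'' set $F \asn \Complement T_\epsilon = \set{x \in X | \mu(B(x,\epsilon)) > \delta}$ carries all but less than $\epsilon$ of the mass. I would choose centers $x_1, x_2, \dotsc$ recursively so that each new ball captures fresh mass exceeding $\delta$: given $A_i = \set{x_1, \dotsc, x_i}$ with uncovered region $U_i \asn \Complement B(A_i,\epsilon)$, as long as $\mu(U_i) \geq \epsilon$ I pick $x_{i+1}$ with $\mu(B(x_{i+1},\epsilon) \cap U_i) > \delta$. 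The sets $B(x_{i+1},\epsilon) \cap U_i$ are pairwise disjoint with mass $> \delta$ each, so their number is at most $\mass{\mu}/\delta$; this both forces termination and gives $\abs{A} \leq \max(1, \mass{\mu}/\delta)$, where the $\max$ with $1$ absorbs the degenerate case $\mass{\mu} < \delta$ (there every point is thin, so part \ref{it:lm:MMDProperties_2} forces $\mass{\mu} < \epsilon$ and $A = \emptyset$ already works). At termination $\mu(U_k) < \epsilon$, which is exactly $\mu(\Complement B(A,\epsilon)) < \epsilon$.

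The main obstacle is justifying the greedy step, namely that while $\mu(U_i) \geq \epsilon$ one can always find a center whose ball captures more than $\delta$ of the \emph{previously uncovered} mass. Here I use $\mu(T_\epsilon) < \epsilon \leq \mu(U_i)$ to conclude that $F \cap U_i$ has strictly positive mass, so an uncovered fat point exists; the delicate point, which I expect to require the most care, is converting fatness of such a point with respect to $\mu$ into fresh captured mass with respect to $U_i$, equivalently controlling the already-covered portion $B(x_{i+1},\epsilon) \cap B(A_i,\epsilon)$ of the chosen ball. This is the crux of the lemma; once the greedy step is secured, the disjointness bookkeeping above completes the proof.
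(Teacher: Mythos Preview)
Your arguments for \ref{it:lm:MMDProperties_1} and \ref{it:lm:MMDProperties_2} are correct and match the paper's (which mostly defers to \cite{GPW09} but, where it does argue, uses exactly your left-continuity computation for $\epsilon \mapsto \mu(T_\epsilon)$).

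For \ref{it:lm:MMDProperties_3} there is a genuine gap, and it is precisely the one you flag: the greedy step cannot in general be secured. Concretely, take $\mu$ Lebesgue on $[0,3]$, $\epsilon=1$, $\delta=1$; then $\mu(T_1)=0$ so $\MMD{\mu}<1$. Starting with $x_1=1.5$ gives $U_1=[0,0.5]\cup[2.5,3]$ with $\mu(U_1)=1\geq\epsilon$, but for \emph{every} choice of $x_2$ one has $\mu(B(x_2,1)\cap U_1)\leq 0.5<\delta$, so no center captures fresh mass exceeding $\delta$. Hence the recursion you propose can stall, and the disjoint-increments bookkeeping that underlies your cardinality bound breaks down.

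The paper avoids this obstacle by a different construction: it takes $A$ to be a \emph{maximal $\epsilon$-separated} subset of the fat set $D=\Complement T_\epsilon$. Maximality immediately yields $D\subset B(A,\epsilon)$, hence $\mu(\Complement B(A,\epsilon))\leq\mu(T_\epsilon)<\epsilon$, with no need to track fresh mass step by step. The cardinality bound then comes from the fact that every $x\in A$ lies in $D$ (so $\mu(B(x,\epsilon))>\delta$) together with the separation of the centers, giving $\abs{A}\delta\leq\mass{\mu}$. The case $\mass{\mu}\leq\epsilon$ is disposed of trivially with $\abs{A}=1$. If you want to repair your approach rather than switch, note that choosing each $x_{i+1}\in D\cap U_i$ (which is nonempty whenever $\mu(U_i)\geq\epsilon$) already produces an $\epsilon$-separated subset of $D$; you can then drop the fresh-mass requirement entirely and argue termination and the cardinality bound exactly as the paper does.
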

\begin{proof}
  \begin{enumerate}
    \item The claim was proved for probability measures in \cite[Lemma
    6.5]{GPW09}. The same proof holds true for finite measures.
    \item For the ``only if direction'' see Lemma 6.4 in \cite{GPW09}.
    To prove the other direction, observe that the function
    \begin{align*}
      \epsilon \mapsto \mu(B(x, \epsilon))
    \end{align*}
    is left-continuous for every $x \in X$ and that we have the equality
    \begin{align*}
      \mu(\set{x \in X | \mu(B(x, \epsilon)) \leq \delta}) = \int
      \One_{[0, \delta]}(\mu(B(x, \epsilon))) \dif\mu(x).
  \end{align*}
  It follows from the dominated convergence theorem that the function
  \begin{align*}
    \epsilon \mapsto \mu(\set{x \in X | \mu(B(x, \epsilon)) \leq \delta})
  \end{align*}
  is also left-continuous for a fixed $\delta$. If $V_\delta(\mu)
  \geq \epsilon$, then we have for every $\epsilon' < \epsilon$ that
  $\mu(\set{x \in X | \mu(B(x, \epsilon')) \leq \delta}) > \epsilon'$
  and therefore
  \begin{align*}
    \mu(\set{x \in X | \mu(B(x, \epsilon)) \leq \delta }) =
    \lim_{\epsilon' \nearrow \epsilon} \mu(\set{x \in X | \mu(B(x,
    \epsilon')) \leq \delta} \geq \epsilon.
  \end{align*}
  \item The assertion was proved in \cite[Lemma 6.9]{GPW09}, but only
  for probability measures. This is why we give a full proof for
  finite measures: In case $\mass{\mu} \leq \epsilon$ {we have
  $\mu(\Complement B(x, \epsilon)) < \epsilon$ for every
  $x \in \supp \mu$ (or every $x \in X$ if $\mu = \nullmeasure$)} and
  we are done. Otherwise we define
  $D \asn \set{ x \in X | \mu(B(x, \epsilon)) > \delta}$. Because
  $\MMD{\mu}$ is less than $\epsilon$, we have
  $\mu(\Complement D) < \epsilon < \mass{\mu}$ and $D$ is not empty.
  By \cite[page 278]{BuragoBurago} there exists an
  $\epsilon$-separated discrete subset $A$ of $D$ that is maximal.
  That is, we have $r(x_1, x_2) \geq \epsilon$ for $x_1, x_2 \in A$
  with $x_1 \not= x_2$ and adding a further point of $D$ would destroy
  this property. It follows from the maximality that
  $D \subset B(A, \epsilon)$ and therefore
  $\mu(\Complement B(A, \epsilon)) \leq \mu(\Complement D) <
  \epsilon$. Moreover, we see that
  \begin{displaymath}
    \mass{\mu} \geq \mu\left(B(A, \epsilon)\right)
    = \sum_{x \in A}\mu\left(B(x, \epsilon)\right) \geq \abs{A} \delta,
  \end{displaymath}
  which yields the claim.
  \end{enumerate}
\end{proof}

\begin{lemma}[$\MMD{}$ is upper
  semi-continuous]\label{lm:MMDUpperSemiCts}
  Let $\M$ denote the set of metric measure spaces equipped with the
  Gromov-weak topology and let $\delta>0$ be fixed. The function
  \begin{align*}
    \M &\to \Rplus \\
    (X, r, \mu) &\mapsto \MMD{\mu} 
  \end{align*}
  is upper semi-continuous. That is, if a net (or a sequence)
  $((X_\alpha, r_\alpha, \mu_\alpha))_\alpha$ of metric measure spaces converges Gromov
  weakly to $(X, r, \mu)$, then
  \begin{displaymath}
    \limsup_{\alpha}\MMD{\mu_\alpha} \leq \MMD{\mu}.
  \end{displaymath}
\end{lemma}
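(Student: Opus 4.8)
The plan is to reduce everything to a single ambient space and then transfer the defining inequality for $\MMD{}$ across weak convergence. Since the (one-level) Gromov-weak topology on $\M$ is metrizable and Polish (\cite{GPW09}), upper semi-continuity may be checked on sequences, so I would take $(X_n, r_n, \mu_n) \to (X, r, \mu)$ Gromov-weakly. By the embedding lemma for mm spaces (\cite{GPW09}, the one-level analog of Lemma~\ref{lm:d2GPconvergence}) I may assume that all $X_n$ and $X$ are isometrically embedded in a common Polish space $(Z, r_Z)$ with $\dP^{\calM_f(Z)}(\mu_n, \mu) \to 0$. Crucially, $\MMD{}$ depends only on the measure and the metric on its support (\cite[Remark 2.10]{GPW09}), so none of the values $\MMD{\mu_n}$ or $\MMD{\mu}$ change under this embedding, and the whole argument can be run inside $\calM_f(Z)$.

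Next I would fix an arbitrary $\epsilon > \MMD{\mu}$; by Lemma~\ref{lm:MMDProperties}\ref{it:lm:MMDProperties_2} this is equivalent to $\mu(\set{x \in Z | \mu(B(x, \epsilon)) \leq \delta}) < \epsilon$. The goal is then to show that eventually $\mu_n(\set{x | \mu_n(B(x, \epsilon)) \leq \delta}) < \epsilon$, which again by Lemma~\ref{lm:MMDProperties}\ref{it:lm:MMDProperties_2} gives $\MMD{\mu_n} < \epsilon$ eventually. To create room for a Prokhorov estimate I would introduce, for $0 < \rho < \epsilon$, the \emph{thickened thin set}
\begin{displaymath}
  S(\rho) \asn \set{x \in Z | \mu(B(x, \epsilon - \rho)) \leq \delta + \rho}.
\end{displaymath}
Because $x \mapsto \mu(B(x, s))$ is lower semi-continuous (using $B(x', s - \eta) \subset B(x, s)$ for $r_Z(x,x') < \eta$ and continuity of $\mu$ from below), each $S(\rho)$ is closed. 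The sets $S(\rho)$ decrease as $\rho \searrow 0$, with $\bigcap_{\rho > 0} S(\rho) = \set{x | \mu(B(x, \epsilon)) \leq \delta}$, so continuity of the finite measure $\mu$ from above yields $\mu(S(\rho)) \searrow \mu(\set{x | \mu(B(x, \epsilon)) \leq \delta}) < \epsilon$. Hence I can fix a single $\rho_0 \in (0, \epsilon)$ with $\mu(S(\rho_0)) < \epsilon$.

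The transfer step is then to compare the ``thin-point set'' of $\mu_n$ with the fixed closed set $S(\rho_0)$. For $n$ large enough that $\dP^{\calM_f(Z)}(\mu_n, \mu) < \rho_0$, I would apply the Prokhorov inequality $\mu(A) \leq \mu_n(B(A, \rho_0)) + \rho_0$ to the closed set $A = \overline{B(x, \epsilon - \rho_0)}$; since $B(A, \rho_0) \subset B(x, \epsilon)$, this gives $\mu(B(x, \epsilon - \rho_0)) \leq \mu_n(B(x, \epsilon)) + \rho_0$. Consequently $\set{x | \mu_n(B(x, \epsilon)) \leq \delta} \subset S(\rho_0)$, so $\mu_n(\set{x | \mu_n(B(x, \epsilon)) \leq \delta}) \leq \mu_n(S(\rho_0))$. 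Weak convergence together with closedness of $S(\rho_0)$ gives $\limsup_n \mu_n(S(\rho_0)) \leq \mu(S(\rho_0)) < \epsilon$, whence eventually $\MMD{\mu_n} < \epsilon$ and thus $\limsup_n \MMD{\mu_n} \leq \epsilon$. Letting $\epsilon \searrow \MMD{\mu}$ finishes the proof.

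I expect the main obstacle to be precisely this transfer step: weak convergence only controls masses of closed (or open) sets, yet the thin-point set is defined through $\mu_n$ itself rather than $\mu$. The resolution forces a simultaneous loss in the radius ($\epsilon \to \epsilon - \rho_0$) and in the mass threshold ($\delta \to \delta + \rho_0$) coming from the Prokhorov estimate, and the delicate point is absorbing this double slack without it accumulating — which is exactly what lower semi-continuity of the ball-mass function (closedness of $S(\rho_0)$) and continuity of $\mu$ from above accomplish, letting $\rho_0$ be chosen before $n$.
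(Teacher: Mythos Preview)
Your argument is correct. The paper itself does not prove this lemma but simply cites \cite[Proposition 6.6]{GPW09}, remarking that the proof there for probability measures carries over verbatim to finite measures; your write-up is essentially a reconstruction of that argument---embed in a common space, use the characterization of Lemma~\ref{lm:MMDProperties}\ref{it:lm:MMDProperties_2}, introduce a Prokhorov slack $\rho_0$ to compare the $\mu_n$-thin set to a fixed closed $\mu$-thin set, and conclude via Portmanteau.
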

\begin{proof}
  The proof for sequences of metric probability measure spaces can be
  found in \cite[Proposition 6.6]{GPW09}. It remains valid even if we
  replace metric probability measure spaces by metric measure
  spaces with \emph{finite} measures.
\end{proof}

The following technical lemma is a preparation for the example in
Section~\ref{sec:NestedKingman}.
\begin{lemma}\label{lm:AMMDLowerSemiCts}
  Let $\M$ denote the set of metric measure spaces equipped with the
  Gromov-weak topology and let $\epsilon, \delta>0$ be fixed. The
  function
  \begin{align*}
    \M &\to \Rplus \\
    (X, r, \mu) &\mapsto \mu(\set{x \in X | \mu(\clBall(x, \epsilon))
    < \delta})
  \end{align*}
  is lower semi-continuous. That is, if a net (or a sequence)
  $((X_\alpha, r_\alpha, \mu_\alpha))_\alpha$ of metric measure spaces converges Gromov
  weakly to $(X, r, \mu)$, then
\begin{align*}
  \liminf_{\alpha}\mu_\alpha(\set{x \in X |
  \mu_\alpha(\clBall(x, \epsilon)) < \delta}) \geq \mu({x \in X |
  \mu(\clBall(x, \epsilon)) < \delta}).
\end{align*}
\end{lemma}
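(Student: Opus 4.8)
The plan is to reduce, via an isometric embedding, to weak convergence of finite measures in one fixed Polish space, and there to trap the $\epsilon$-ball mass functions between fixed continuous bump integrals so that the portmanteau theorem applies. Since the one-level Gromov-weak topology on $\M$ is metrizable (it embeds into $\calM_f(\Rplus^{\N\times\N})$, cf.\ the introduction), lower semicontinuity is equivalent to sequential lower semicontinuity, so it suffices to treat a convergent sequence $(X_n,r_n,\mu_n)\to(X,r,\mu)$. By the one-level analog of Lemma~\ref{lm:d2GPconvergence} I may assume all $X_n$ and $X$ are isometrically embedded in a common Polish space $(Z,r_Z)$ with $\mu_n\to\mu$ weakly in $\calM_f(Z)$; because each $\mu_n$ lives on $X_n$ and the balls are computed with the restricted metric, the quantity in the lemma is unchanged if the defining sets are taken inside $Z$. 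Writing $B_n\asn\set{x\in Z | \mu_n(\clBall(x,\epsilon))\ge\delta}$ and $B\asn\set{x\in Z | \mu(\clBall(x,\epsilon))\ge\delta}$ and using that $\mu_n(Z)\to\mu(Z)$, the assertion becomes equivalent to the complementary bound $\limsup_n \mu_n(B_n)\le\mu(B)$, which pairs naturally with closed sets.

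For the bump functions, I would fix a small $\rho>0$ and let $\underline\kappa_\rho\from\Rplus\to[0,1]$ be the continuous, $\nicefrac{1}{\rho}$-Lipschitz function equal to $1$ on $[0,\epsilon]$ and to $0$ on $[\epsilon+\rho,\infty)$, and set $U^\rho_m(x)\asn\int \underline\kappa_\rho(r_Z(x,y))\dif m(y)$ for $m\in\set{\mu,\mu_1,\mu_2,\dotsc}$. Then $U^\rho_m$ is continuous in $x$ and satisfies $m(\clBall(x,\epsilon))\le U^\rho_m(x)\le m(\clBall(x,\epsilon+\rho))$. In particular $B_n\subset\set{x\in Z | U^\rho_{\mu_n}(x)\ge\delta}$, while $U^\rho_\mu\searrow \mu(\clBall(\wildcard,\epsilon))$ pointwise as $\rho\searrow0$ by dominated convergence. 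The crucial point is that for any compact $C\subset Z$ the family $\set{y\mapsto\underline\kappa_\rho(r_Z(x,y)) | x\in C}$ is the continuous image of $C$ in $(\calC_b(Z),\normInfty{\wildcard})$, hence sup-norm compact; since the masses $\mu_n(Z)$ are bounded, weak convergence then yields the uniform convergence $\sup_{x\in C}\abs{U^\rho_{\mu_n}(x)-U^\rho_\mu(x)}\to 0$.

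The assembly then runs as follows. Given $\eta>0$, I first choose $\rho,\theta>0$ so small that the closed set $F\asn\set{x\in Z | U^\rho_\mu(x)\ge\delta-\theta}$ satisfies $\mu(F)\le\mu(B)+\eta$; this is possible because these closed sets shrink to $B$ as $\rho,\theta\searrow0$ and $\mu$ is finite (continuity from above). Next, as $(\mu_n)_n$ converges weakly it is tight, so there is a compact $C\subset Z$ with $\sup_n\mu_n(\Complement C)<\eta$. On $C$ the uniform convergence gives, for all large $n$, $U^\rho_{\mu_n}(x)<U^\rho_\mu(x)+\theta$; hence every $x\in B_n\cap C$ has $U^\rho_\mu(x)>\delta-\theta$, i.e.\ $B_n\cap C\subset F$. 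Therefore $\mu_n(B_n)\le\mu_n(F)+\eta$ eventually, and the portmanteau theorem applied to the closed set $F$ gives $\limsup_n\mu_n(B_n)\le\mu(F)+\eta\le\mu(B)+2\eta$. Letting $\eta\searrow0$ proves $\limsup_n\mu_n(B_n)\le\mu(B)$, which is the claimed lower semicontinuity.

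The principal obstacle is that both the integrand set $B_n$ and the integrating measure $\mu_n$ vary with $n$, so neither the portmanteau theorem nor the upper semicontinuity of Lemma~\ref{lm:MMDUpperSemiCts} applies directly; the device resolving this is to trap the $n$-dependent mass $\mu_n(\clBall(\wildcard,\epsilon))$ between fixed continuous functionals and combine uniform convergence on compacta with tightness. It is worth stressing that the use of \emph{closed} balls and of the \emph{strict} inequality in the definition is precisely what makes $x\mapsto\mu(\clBall(x,\epsilon))$ upper semicontinuous and the set $B$ closed, so that the one-sided estimates point in the right direction; with open balls or a non-strict inequality the same scheme would instead produce the opposite (upper) semicontinuity, as in Lemma~\ref{lm:MMDUpperSemiCts}.
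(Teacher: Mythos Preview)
Your proof is correct and follows the same underlying idea as the paper's: embed into a common Polish space and reduce the varying-set, varying-measure problem to a Portmanteau-type estimate. The paper merely cites \cite[Proposition~6.6~(iv)]{GPW09} and says to swap closed for open sets, whereas you carry out the argument in full via the complementary (closed-set, $\limsup$) formulation with explicit bump-function smoothing and uniform convergence on compacta; this is exactly the kind of detail the paper's one-line reference suppresses, and your final remark on why closed balls with strict inequality are the right combination shows you see the duality with Lemma~\ref{lm:MMDUpperSemiCts}.
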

\begin{proof}
  The proof of \cite[Proposition 6.6 (iv)]{GPW09} actually shows that
  \begin{align*}
    \M &\to \Rplus \\
    (X, r, \mu) &\mapsto \mu(\set{x \in X | \mu(B(x, \epsilon))
    \leq \delta})
  \end{align*}
  is upper semi-continuous by using the Portmanteau Theorem for closed
  sets. The proof of our claim is similar, but uses the Portmanteau
  Theorem for open sets instead.
\end{proof}

\section{Approximation of m2m spaces}
\label{sec:Approximation}

As mentioned before, we need to approximate two-level measures
$\nu \in \calM_f(\calM_f(X))$ by measures $\nu_K$ from
$\calM_f(\calM_{\leq K}(X))$ to ensure that the first moment measure
is finite. The simplest choice for $\nu_K$ would be the restriction of
$\nu$ to $\calM_{\leq K}(X)$, \ie
$\nu_K(\wildcard) = \nu(\wildcard \cap \calM_{\leq K}(X))$. However,
this rough cut-off may lead to discontinuities if $\nu(\calM_K(X))$
is greater than 0. Therefore it is better to ``cut off'' $\nu$ with a
continuous density $f_K$ as defined below.

Let $\set{g_K \in \calC_b(\Rplus) | K > 0}$ be a set of functions
having the following properties:
\begin{enumerate}
  \item $0 \leq g_K \leq 1$ for every $K$,
  \item $g_K(x) = 0$ for $x \geq K$,
  \item $g_K \to 1$ for $K \to \infty$ uniformly on every bounded
  interval.
\end{enumerate}
For example we may define
\begin{align*}
  g_K(x) \asn
  \begin{cases}
    1 & 0 \leq x \leq \frac{K}{2} \\
    2-\frac{2x}{K} & \frac{K}{2} < x \leq K \\
    0 & K < x.
  \end{cases}
\end{align*}

Moreover, let $f_K \asn g_K \circ \mass{}$. For a two-level measure
$\nu \in \calM_f(\calM_f(X))$ we denote by $f_K \cdot \nu$ the measure
that has density $f_K$ with respect to $\nu$. That is, it is the
unique measure which satisfies
\begin{align*}
  f_K \cdot \nu(A) = \int_A f_K(\mu) \dif\nu(\mu) = \int_A
  g_K(\mass{\mu}) \dif\nu(\mu)
\end{align*}
for every measurable $A \subset \calM_f(X)$. The measures
$\set{ f_K\cdot \nu | K>0}$ will serve as approximations of
$\nu$. Clearly we have $f_K \cdot \nu \wto \nu$ for $K \to
\infty$. Moreover, if $\nu_n \wto \nu$, we have $f_K \cdot \nu_n \wto
f_K \cdot \nu$ for every $K>0$. Observe that $f_K\cdot \nu \in \calM_f(\calM_{\leq
  K}(X))$, thus the first moment measure is finite.

\begin{lemma}[Properties of the approximation
  $f_K \cdot \nu$]\label{lm:fKisContinuous}
  The following statements hold true in the two-level Gromov-Prokhorov
  and in the two-level Gromov-weak topology:
  \begin{enumerate}
    \item\label{it:lm:fKisContinuous1} The function $(X, r, \nu)
    \mapsto (X, r, f_K\cdot\nu)$ is continuous for every $K>0$.
    \item\label{it:lm:fKisContinuous1a} For every $K>0$ the function
    $(X, r, \nu) \mapsto (X, r, \mm{f_K\cdot\nu})$ is continuous from
    $\Mtwo$ to $\M$, where $\M$ denotes the set of
    metric measure spaces equipped with the Gromov-weak topology.
    \item\label{it:lm:fKisContinuous1b} The function $(X, r, \nu)
    \mapsto \DD{\mm{f_K \cdot \nu}}$ is continuous for every $K>0$.
    \item\label{it:lm:fKisContinuous2} $(X, r, f_K \cdot \nu) \to (X,
    r, \nu)$ for $K \to \infty$ and for every m2m space $(X, r, \nu)
    \in \Mtwo$.
  \end{enumerate}
\end{lemma}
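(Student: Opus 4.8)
The plan is to treat both topologies in tandem, leaning on two structural facts about the cut-off. First, $f_K = g_K\circ\mass{}$ is a bounded continuous function on $\calM_f(X)$ that depends only on the total mass; since an isometric embedding $\iota$ preserves mass, $f_K$ commutes with the two-level push-forward, so $\iota_{**}(f_K\cdot\nu) = f_K\cdot(\iota_{**}\nu)$, and the already-noted implications $\nu_\alpha\wto\nu \Rightarrow f_K\cdot\nu_\alpha\wto f_K\cdot\nu$ and $f_K\cdot\nu\wto\nu$ hold. Second, the map $t\mapsto t\,g_K(t)$ is bounded by $K$ on $\Rplus$, which is exactly what renders $\mm{f_K\cdot\nu}$ finite. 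For the two-level Gromov-weak topology I would argue with nets and the simplified convergence criterion of Lemma~\ref{lm:testfunctions_alternative}; for the two-level Gromov-Prokhorov topology I would use sequences (the metric makes $\tautGP$ first countable) together with the embedding Lemma~\ref{lm:d2GPconvergence}. I would also use repeatedly that $\tautGP$ is finer than $\tautGw$ (Lemma~\ref{lm:d2GPconvergenceImpliesWeakConvergence}): for a map into a \emph{fixed} target, $\tautGw$-continuity implies $\tautGP$-continuity, and every $\tautGP$-convergent net is $\tautGw$-convergent.

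For \ref{it:lm:fKisContinuous1} in the $\tautGw$ topology, given a net $\calX_\alpha\tGwto\calX$ I would verify the two conditions of Lemma~\ref{lm:testfunctions_alternative} for the images $(X_\alpha,r_\alpha,f_K\cdot\nu_\alpha)$. The mass distribution transforms as $\mass{}_*(f_K\cdot\nu) = g_K\cdot\mass{}_*\nu$, so weak convergence of $\mass{}_*\nu_\alpha$ (\cf Remark~\ref{rm:MassIs2lvlGromovWeakContinuous}) passes to $\mass{}_*(f_K\cdot\nu_\alpha)$; and a test function \eqref{TF4} evaluated at $f_K\cdot\nu$ equals the \eqref{TF4} test function at $\nu$ with weight $\psi$ replaced by $\psi\cdot\prod_i g_K(\wildcard)$, which again satisfies the hypotheses of Definition~\ref{def:TF}. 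In the $\tautGP$ topology I would instead embed a convergent sequence via Lemma~\ref{lm:d2GPconvergence} and conclude from $f_K\cdot(\iota_{n**}\nu_n)\wto f_K\cdot(\iota_{**}\nu)$ and the two structural facts above.

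The heart of the lemma is \ref{it:lm:fKisContinuous1a} and \ref{it:lm:fKisContinuous1b}, which concern the first moment measure; here the targets ($\M$ and $\calM_f(\Rplus)$) carry fixed topologies, so it suffices to prove $\tautGw$-continuity. Recalling that the Gromov-weak topology on $\M$ is the initial topology induced by the polynomials $(X,r,\sigma)\mapsto\int\phi\circ R\dif\sigma^{\otimes n}$ (\cf \cite{GPW09,Gloede12}), continuity of $(X,r,\nu)\mapsto(X,r,\mm{f_K\cdot\nu})$ reduces to $\tautGw$-continuity of each such polynomial of $\mm{f_K\cdot\nu}$. The key computation is the identity
\begin{align*}
  \int\phi\circ R\dif\mm{f_K\cdot\nu}^{\otimes n}
  = \int\Big(\prod_{i=1}^n\mass{\mu_i}\,g_K(\mass{\mu_i})\Big)
    \int\phi\circ R\dif\normalize{\vect\mu}^{\otimes\vect{1}}
    \dif\nu^{\otimes n}(\vect\mu),
\end{align*}
whose right-hand side is a test function of the form \eqref{TF4} with weight $\psi(\vect a) = \prod_i a_i\,g_K(a_i)$; this $\psi$ is bounded and continuous precisely because $t\mapsto t\,g_K(t)$ is bounded, and it vanishes whenever a coordinate does. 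Lemma~\ref{lm:testfunctions_alternative} then yields convergence of every polynomial, hence Gromov-weak convergence in $\M$, and $\tautGP$-continuity follows since the target is fixed. For \ref{it:lm:fKisContinuous1b} I would postcompose with the distance-distribution map $\sigma\mapsto\DD{\sigma} = r_*\sigma^{\otimes 2}$, which is continuous from $\M$ to $\calM_f(\Rplus)$ because $\int h\dif\DD{\sigma} = \int\phi\circ R\dif\sigma^{\otimes 2}$ with $\phi(M)=h(M_{12})$ is itself a polynomial.

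Finally, for \ref{it:lm:fKisContinuous2} I would keep $(X,r)$ fixed and use the identity embedding to bound $\dtGP((X,r,f_K\cdot\nu),(X,r,\nu))\leq\dP^{\calM_f(X)}(f_K\cdot\nu,\nu)$. Since $f_K\cdot\nu\wto\nu$ as $K\to\infty$, the Prokhorov distance tends to $0$, giving $\tautGP$-convergence and hence $\tautGw$-convergence because $\tautGP$ is finer. I expect the main obstacle to be \ref{it:lm:fKisContinuous1a}: one must recognise that the moment-measure polynomials are \emph{exactly} test functions of type \eqref{TF4} and check that the cut-off makes their weights $\psi$ bounded — this is the step where finiteness of $\mm{f_K\cdot\nu}$ is genuinely used, and keeping the normalised-versus-unnormalised bookkeeping of the measures straight is the delicate part.
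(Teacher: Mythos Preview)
Your proposal is correct and follows essentially the same route as the paper: part~\ref{it:lm:fKisContinuous1} via Lemma~\ref{lm:testfunctions_alternative} (for $\tautGw$) and the embedding Lemma~\ref{lm:d2GPconvergence} (for $\tautGP$), part~\ref{it:lm:fKisContinuous1b} as a postcomposition of~\ref{it:lm:fKisContinuous1a} with the continuous map $(X,r,\mu)\mapsto\DD{\mu}$, and part~\ref{it:lm:fKisContinuous2} via the identity embedding and $f_K\cdot\nu\wto\nu$.

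The one organisational difference worth noting is in part~\ref{it:lm:fKisContinuous1a}. You write the moment-measure polynomial $\int\phi\circ R\dif\mm{f_K\cdot\nu}^{\otimes n}$ directly as a \eqref{TF4} test function of $\nu$ with weight $\psi(\vect a)=\prod_i a_i\,g_K(a_i)$, which is globally bounded and continuous because $t\,g_K(t)\le K$. The paper instead first applies part~\ref{it:lm:fKisContinuous1} to pass to $f_K\cdot\nu$ and then takes a $\psi\in\calC_b(\Rplus^m)$ that agrees with $\prod_i x_i$ on $[0,K]^m$ (legitimate since $f_K\cdot\nu$ is supported on $\calM_{\leq K}$). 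Your route is marginally more direct and makes explicit why the cut-off is needed (boundedness of $t\,g_K(t)$), while the paper's route reuses part~\ref{it:lm:fKisContinuous1} as a black box; both are equivalent.
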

\begin{proof}
  \ref{it:lm:fKisContinuous1} with $\tautGw$: Fix $K>0$ and let the
  net $((X_\alpha, r_\alpha, \nu_\alpha))_\alpha$ converge to
  $(X, r, \nu)$ in the two-level Gromov-weak topology. We use
  Lemma~\ref{lm:testfunctions_alternative} to show that
  $(X_\alpha, r_\alpha, f_K \cdot \nu_\alpha)$ converges to
  $(X, r, f_K \cdot \nu)$. Since
  $\mass{}_*\nu_\alpha \wto \mass{}_*\nu$ (\cf
  Remark~\ref{rm:MassIs2lvlGromovWeakContinuous}) and $g_K$ is
  continuous and bounded, we have for every $h \in \calC_b(\Rplus)$
  \begin{align*}
    \int h \dif\mass{}_*(f_K \cdot \nu_\alpha) = \int h(\mass{\mu})
    g_K(\mass{\mu}) \dif\nu_\alpha(\mu) = \int h(m)g_K(m)
    \dif\mass{}_*\nu_\alpha(m)
  \end{align*}
  and this converges to
  \begin{align*}
    \int h(m) g_K(m) \dif\mass{}_*\nu(m) = \int h \dif\mass{}_*(f_K\cdot\nu).
  \end{align*}
  It follows that $\mass{}_*(f_K\cdot\nu_\alpha)$ converges weakly to
  $\mass{}_*(f_K\cdot\nu)$. Now let $\tilde{\Phi}$ be as in
  \eqref{TF4}. Then
  \begin{align*}
    \tilde{\Phi}((X_\alpha, r_\alpha, f_K \cdot \nu_\alpha)) &= \int \psi(\mass{\vect{\mu}}) \int
    \phi \circ R \dif \normalize{\vect{\mu}}^{\otimes\vect{n}}
    \dif (f_K\cdot\nu_\alpha)^{\otimes m}(\vect{\mu}) \\
    &= \int \psi(\mass{\vect{\mu}}) \prod_{i=1}^m g_K(\mass{\mu_i}) \int
    \phi \circ R \dif \normalize{\vect{\mu}}^{\otimes\vect{n}}
    \dif \nu_\alpha^{\otimes m}(\vect{\mu})
  \end{align*}
  and this converges to
  \begin{align*}
    \int \psi(\mass{\vect{\mu}}) \prod_{i=1}^m g_K(\mass{\mu_i}) \int
    \phi \circ R \dif \normalize{\vect{\mu}}^{\otimes\vect{n}}
    \dif \nu^{\otimes m}(\vect{\mu}) = \tilde{\Phi}((X, r, f_K \cdot \nu)).
  \end{align*}
  Therefore, both conditions of
  Lemma~\ref{lm:testfunctions_alternative} are satisfied and
  $(X_\alpha, r_\alpha, f_K \cdot \nu_\alpha)$ converges to
  $(X, r, f_K \cdot \nu)$.

  \ref{it:lm:fKisContinuous1} with $\tautGP$: If we have a converging
  sequence of m2m spaces $(X_n, r_n, \nu_n)\to (X, r, \nu)$, we can
  embed all the metric spaces isometrically into a common Polish
  metric space $(Z, r_Z$) such that the measures $\nu_n$ converge
  weakly to $\nu$ in $\calM_f(\calM_f(Z))$ (\cf
  Lemma~\ref{lm:d2GPconvergence}). Observe that the function
  $\nu \to f_K\cdot\nu$ is weakly continuous on $\calM_f(\calM_f(Z))$.
  Thus, $f_K\cdot\nu_n$ converges weakly to $f_K\cdot\nu$ in
  $\calM_f(\calM_f(Z))$ and weak convergence implies convergence of
  the corresponding m2m spaces in the two-level Gromov-Prokhorov
  metric.

  \ref{it:lm:fKisContinuous1a}: Recall from
  Lemma~\ref{lm:d2GPconvergenceImpliesWeakConvergence} that the
  two-level Gromov-weak topology $\tautGw$ is coarser than the
  two-level Gromov-Prokhorov topology $\tautGP$. Thus, it suffices
  to show continuity only with respect to $\tautGw$. Let $K>0$ and let
  the net $((X_\alpha, r_\alpha, \nu_\alpha))_\alpha$ converge to
  $(X, r, \nu)$ in the two-level Gromov-weak topology. By
  assertion~\ref{it:lm:fKisContinuous1}
  $(X_\alpha, r_\alpha, f_K \cdot \nu_\alpha)$ converges two-level
  Gromov-weakly to $(X, r, f_K \cdot \nu)$. We want to show that the
  mm spaces $((X_\alpha, r_\alpha, \mm{f_K \cdot \nu_\alpha}))_\alpha$
  converge Gromov-weakly to $(X, r, \mm{f_K\cdot\nu})$. By the
  Definition of the Gromov-weak topology (\cf \cite{GPW09}) we need to
  show that
  $\hat{\Phi}((X_\alpha, r_\alpha, \mm{f_K \cdot \nu_\alpha}))$
  converges to $\hat{\Phi}((X, r, \mm{f_K\cdot\nu}))$ for every test
  function $\hat{\Phi} \from \M \to \Rplus$ of the form
  \begin{align*}
    \hat{\Phi}((X, r, \mu)) = \int \phi \circ R \dif\mu^{\otimes m}
  \end{align*}
  with $m \in \N$ and $\phi \in \calC_b(\Rplus^{m \times m})$. Fix
  such a $\hat{\Phi}$ and let $\vect{n} \asn (1, \dotsc, 1) \in \N^m$.
  Then
  \begin{align*}
    \hat{\Phi}((X_\alpha, r_\alpha, \mm{f_K \cdot \nu_\alpha})) =  \int
    \phi \circ R\, \dif(\mm{f_K \cdot \nu_\alpha})^{\otimes
    m} = \int \int \phi \circ R\,
    \dif\vect{\mu}^{\otimes\vect{n}} \dif (f_K \cdot
    \nu_\alpha)^{\otimes m}(\vect{\mu}). 
  \end{align*}
  If we choose a function $\psi \in \calC_b(\Rplus^m)$ such that
  $\psi(x_1, \dotsc x_m) = \prod_{i=1}^m x_i$ on $[0, K]^m$, the
  right hand side of the last equation can be written as
  \begin{displaymath}
    \tilde{\Phi}((X_\alpha,  r_\alpha, f_K \cdot \nu_\alpha)) \asn
    \int \psi(\mass{\vect{\mu}}) \int \phi
    \circ R \dif\normalize{\vect{\mu}}^{\otimes\vect{n}}
    \dif (f_K\cdot\nu_\alpha)^{\otimes m}(\vect{\mu}).
  \end{displaymath}
  $\tilde{\Phi}$ is a test function as in \eqref{TF4}. Since
  $(X_\alpha, r_\alpha, f_K \cdot \nu_\alpha)$ converges
  two-level Gromov-weakly, we also have convergence of all test
  functions of this form (by Lemma~\ref{lm:testfunctions_alternative})
  and thus
  \begin{align*}
    \hat{\Phi}((X_\alpha, r_\alpha, \mm{f_K \cdot \nu_\alpha})) =
    \tilde{\Phi}((X_\alpha, r_\alpha, f_K \cdot \nu_\alpha)) \to
    \tilde{\Phi}((X, r, f_K 
    \cdot \nu)) = \hat{\Phi}((X, r, \mm{f_K \cdot \nu})).
  \end{align*}
 
  \ref{it:lm:fKisContinuous1b}: The claim follows immediately with
  assertion~\ref{it:lm:fKisContinuous1a} and the fact that the function
  \begin{align*}
    \M &\to \calM_1(\Rplus) \\
    (X, r, \mu) &\mapsto \DD{\mu}
  \end{align*}
  is continuous (\cf \cite[Proposition 6.6]{GPW09}).  

  \ref{it:lm:fKisContinuous2}: $f_K\cdot\nu$ converges weakly to
  $\nu$ for $K \to \infty$. Therefore, $(X, r, f_K \cdot \nu)$
  converges to $(X, r, \nu)$ in the two-level Gromov-Prokhorov metric.
  Since two-level Gromov-Prokhorov convergence implies two-level Gromov
  weak convergence
  (\cf Lemma~\ref{lm:d2GPconvergenceImpliesWeakConvergence}), assertion
  \ref{it:lm:fKisContinuous2} is true for both topologies.
\end{proof}

By combining the preceding lemma with Lemma~\ref{lm:MMDUpperSemiCts}
we get the following corollary.
\begin{corollary}\label{cor:tGwConvergenceAndMMD}
  For all $\delta, K>0$ the function
  \begin{align*}
    \Mtwo &\to \Rplus \\
    (X, r, \nu) &\mapsto \MMD{\mm{f_K \cdot \nu}}
  \end{align*}
  is upper semi-continuous in the two-level Gromov-weak topology (and
  in the two-level Gromov-Prokhorov topology). That is, if a net (or a
  sequence) $((X_\alpha, r_\alpha, \nu_\alpha))_\alpha$ of m2m spaces
  converges two-level Gromov weakly to $(X, r, \nu)$, then
  \begin{displaymath}
    \limsup_{\alpha}\MMD{\mm{f_K \cdot \nu_\alpha}} \leq \MMD{\mm{f_K
        \cdot \nu}}. 
  \end{displaymath}
  In particular this implies
  $\limsup\limits_\alpha\MMD{\mm{f_K \cdot \nu_\alpha}} \to 0$ for
  $\delta \searrow 0$.
\end{corollary}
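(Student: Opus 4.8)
The plan is to exhibit the map in question as a composition that passes through the space $\M$ of metric measure spaces, so that the statement falls out of the two immediately preceding lemmas. Indeed, for fixed $K>0$ the function
\[
  (X, r, \nu) \mapsto \MMD{\mm{f_K \cdot \nu}}
\]
factors as the map $(X, r, \nu) \mapsto (X, r, \mm{f_K \cdot \nu})$ from $\Mtwo$ into $\M$, followed by the map $(X, r, \mu) \mapsto \MMD{\mu}$ from $\M$ into $\Rplus$.

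First I would invoke Lemma~\ref{lm:fKisContinuous}\ref{it:lm:fKisContinuous1a}, according to which the first factor is continuous from $(\Mtwo, \tautGw)$ to $\M$. Thus, whenever a net $((X_\alpha, r_\alpha, \nu_\alpha))_\alpha$ converges two-level Gromov-weakly to $(X, r, \nu)$, the mm spaces $(X_\alpha, r_\alpha, \mm{f_K \cdot \nu_\alpha})$ converge Gromov-weakly to $(X, r, \mm{f_K \cdot \nu})$ in $\M$. Applying the upper semi-continuity of $\MMD{}$ on $\M$ provided by Lemma~\ref{lm:MMDUpperSemiCts} to this convergent net yields precisely
\[
  \limsup_\alpha \MMD{\mm{f_K \cdot \nu_\alpha}} \leq \MMD{\mm{f_K \cdot \nu}},
\]
which is the asserted upper semi-continuity in $\tautGw$. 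The version for $\tautGP$ requires no separate argument: since $\tautGP$ is finer than $\tautGw$ (Lemma~\ref{lm:d2GPconvergenceImpliesWeakConvergence}), every $\tautGP$-convergent net is also $\tautGw$-convergent, and the same inequality applies.

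For the final ``in particular'' claim I would fix the displayed bound for each $\delta>0$ and then send $\delta \searrow 0$. Because $f_K \cdot \nu \in \calM_f(\calM_{\leq K}(X))$, its first moment measure $\mm{f_K \cdot \nu}$ is a genuine finite Borel measure on $X$, so Lemma~\ref{lm:MMDProperties}\ref{it:lm:MMDProperties_1} gives $\MMD{\mm{f_K \cdot \nu}} \to 0$ as $\delta \searrow 0$. Since the left-hand side of the bound is non-negative, it is squeezed to $0$, which establishes $\limsup_\alpha \MMD{\mm{f_K \cdot \nu_\alpha}} \to 0$.

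I do not expect any real obstacle here, as all of the substantive work is already contained in Lemmas~\ref{lm:fKisContinuous} and~\ref{lm:MMDUpperSemiCts}. The only point demanding a little care is the routine observation that composing a continuous map with an upper semi-continuous function again gives an upper semi-continuous function; phrased for nets, this is exactly the two-step argument above, and it is also what makes the transfer from the coarser topology $\tautGw$ to the finer topology $\tautGP$ automatic.
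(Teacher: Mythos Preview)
Your proposal is correct and follows exactly the paper's approach: the corollary is stated immediately after Lemma~\ref{lm:fKisContinuous} with the one-line justification ``By combining the preceding lemma with Lemma~\ref{lm:MMDUpperSemiCts},'' and your write-up simply spells out this composition argument (continuity of $(X,r,\nu)\mapsto (X,r,\mm{f_K\cdot\nu})$ followed by upper semi-continuity of $\MMD{}$) together with the routine passage from $\tautGw$ to $\tautGP$ and the use of Lemma~\ref{lm:MMDProperties}\ref{it:lm:MMDProperties_1} for the final claim.
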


\section{Compactness}
\label{sec:compactness}

In this section we examine compactness in $(\Mtwo, \dtGP)$. The main
result of this section is Theorem~\ref{th:CompactnessEquivalencies},
in which we give several equivalent criteria for relative compactness.
In Theorem~\ref{th:CompactNETSEquivalencies} we characterize compact
nets. It might seem odd to look at nets in a metric space. However, we
need to show in the proof of
Theorem~\ref{th:dtGPandtGwConvergenceCoincide} that every
$\tautGw$-convergent net is also $\tautGP$-convergent. Thus it is
useful for us to have a better understanding about $\tautGP$-compact
nets. 

But first we introduce a sequence of compact subsets of $\Mtwo$ whose
union is dense.
  For every $N \in \N$ we define $\A_N \subset \Mtwo$ as the set of
  all m2m spaces $(X, r, \nu)$ such that 
  \begin{itemize}
    \item $\supp\mm{\nu}$ consists of at most $N$ points, 
    \item the diameter of $\supp\mm{\nu}$ is at most $N$,
    \item $\nu \in \calM_{\leq N}(\calM_{\leq N}(X))$.
  \end{itemize}
  Observe that the union $\bigcup_{N \in \N}\A_N$ is dense in
  $(\Mtwo, \dtGP)$ since it contains the dense set $\S$ from the proof
  of Proposition~\ref{prp:dtGPIsPolishMetric}.
\begin{lemma}
  $\A_N$ is compact in the two-level Gromov-Prokhorov topology for
  every $N\in\N$.
\end{lemma}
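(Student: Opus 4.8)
The plan is to exploit that $(\Mtwo, \dtGP)$ is Polish (Proposition~\ref{prp:dtGPIsPolishMetric}), so $\A_N$ is compact as soon as it is sequentially compact. I would therefore take an arbitrary sequence $(\calX_n)_n = ((X_n, r_n, \nu_n))_n$ in $\A_N$ and produce a subsequence converging in $\dtGP$ to a limit again lying in $\A_N$. By Remark~\ref{rm:Xisometrictosuppmm} I may assume $X_n = \supp\mm{\nu_n}$, so each $(X_n, r_n)$ is a finite metric space with at most $N$ points and diameter at most $N$. First, by the pigeonhole principle I pass to a subsequence along which the number of points $\abs{X_n} = k \le N$ is constant, fix labelings $X_n = \set{x_1^n, \dotsc, x_k^n}$, and observe that the distance matrices $(r_n(x_i^n, x_j^n))_{i,j}$ live in the compact cube $[0,N]^{k \times k}$; passing to a further subsequence they converge to a matrix $(d_{ij})_{i,j}$, which is a pseudometric on $\set{1, \dotsc, k}$ (some off-diagonal entries may vanish, which only merges points later on).

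The heart of the argument is to realize all $X_n$ isometrically inside one fixed compact space. I would use the Kuratowski embedding $\iota_n \from X_n \to Z \asn ([0,N]^k, \|\cdot\|_\infty)$, $\iota_n(x_i^n) \asn (r_n(x_i^n, x_1^n), \dotsc, r_n(x_i^n, x_k^n))$, which is isometric by the triangle inequality and which satisfies $\iota_n(x_i^n) \to z_i \asn (d_{i1}, \dotsc, d_{ik})$ as $n \to \infty$. Since $Z$ is compact, $\calM_{\leq N}(Z)$ is compact by Prokhorov's Theorem (Proposition~\ref{prp:ProkhorovsTheorem}: tightness is automatic on a compact space and the masses are bounded by $N$), and hence $\calM_{\leq N}(\calM_{\leq N}(Z))$ is compact as well. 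The push-forwards $\iota_{n**}\nu_n$ all lie in this last set, so after one more subsequence $\iota_{n**}\nu_n$ converges weakly to some $\nu \in \calM_{\leq N}(\calM_{\leq N}(Z))$. Because all spaces are embedded into the common space $Z$, the infimum in the definition of $\dtGP$ gives immediately
\[
  \dtGP\bigl(\calX_n, (Z, \|\cdot\|_\infty, \nu)\bigr)
  \le \dP^{\calM_f(Z)}(\iota_{n**}\nu_n, \nu) \to 0 ,
\]
so $\calX_n \to \calX \asn (Z, \|\cdot\|_\infty, \nu)$ in $\dtGP$ (note $Z \subset \R^k \subset \R^\N$, so $\calX$ is a legitimate m2m space by Remark~\ref{rm:Xisometrictosuppmm}).

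The step needing the most care, and the main obstacle, is checking that the limit $\calX$ actually lies in $\A_N$. The bound $\mass{\nu} \le N$ and the constraint that $\nu$ is carried by $\calM_{\leq N}(Z)$ persist in the weak limit because $\calM_{\leq N}(\calM_{\leq N}(Z))$ is compact, hence closed. For the condition on $\supp\mm{\nu}$ I would argue that the first moment measures converge: for $f \in \calC_b(Z)$ the map $\mu \mapsto \int f \dif\mu$ is bounded and continuous on $\calM_{\leq N}(Z)$, so integrating it against $\iota_{n**}\nu_n \to \nu$ yields $\int f \dif\mm{\iota_{n**}\nu_n} \to \int f \dif\mm{\nu}$, i.e. $\mm{\iota_{n**}\nu_n} \to \mm{\nu}$ weakly. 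A short computation gives $\mm{\iota_{n**}\nu_n} = \iota_{n*}\mm{\nu_n}$, which is supported on the finite set $\set{\iota_n(x_i^n) : i \le k}$; since $\iota_n(x_i^n) \to z_i$, the weak limit $\mm{\nu}$ is supported on $\set{z_1, \dotsc, z_k}$. Hence $\supp\mm{\nu}$ consists of at most $k \le N$ points, and its diameter is at most $N$ because $\|z_i - z_j\|_\infty = \lim_n r_n(x_i^n, x_j^n) = d_{ij} \le N$. All three defining conditions of $\A_N$ are therefore satisfied, so $\calX \in \A_N$, which establishes sequential compactness and thus compactness of $\A_N$.
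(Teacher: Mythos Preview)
Your proof is correct and follows the same overall strategy as the paper: reduce to $X_n = \supp\mm{\nu_n}$, embed all the finite spaces isometrically into one compact metric space $Z$, invoke Prokhorov's theorem to get compactness of $\calM_{\leq N}(\calM_{\leq N}(Z))$, extract a weakly convergent subsequence of $\iota_{n**}\nu_n$, and read off $\dtGP$-convergence from the definition.

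The only notable difference is how the common embedding space is produced. The paper appeals to Gromov--Hausdorff precompactness of uniformly bounded finite metric spaces (\cite[Theorem~7.4.15]{BuragoBurago}) and then to the embedding lemma \cite[Lemma~A.1]{GPW09} to obtain a compact $(Z,r_Z)$ containing isometric copies of all $X_n$ and of the GH-limit $X$. Your argument instead fixes $k=\abs{X_n}$ along a subsequence and uses the explicit Kuratowski embedding into $([0,N]^k,\|\cdot\|_\infty)$. This is more elementary and self-contained, avoiding the two external references, at the cost of the extra bookkeeping you carry out to verify $\calX\in\A_N$ (checking $\mm{\iota_{n**}\nu_n}=\iota_{n*}\mm{\nu_n}\wto\mm{\nu}$ and that the limit is supported on $\{z_1,\dotsc,z_k\}$). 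The paper is terser on this last point, simply asserting $(Z,r_Z,\nu)\cong(X,r,\nu)\in\A_N$ via the GH-limit $X$. Both routes are fine; yours trades a citation for a short explicit computation.
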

\begin{proof}
  Let $((X_n, r_n, \nu_n))_n$ be a sequence in $\A_N$. Without loss of
  generality we assume $X_n = \supp\mm{\nu_n}$. The finite metric
  spaces $((X_n, r_n))_n$ are determined by the number of points and
  the mutual distances between the points. All of these are bounded by
  $N$. By \cite[Theorem 7.4.15]{BuragoBurago} $((X_n, r_n))_n$ is
  relatively compact in the Gromov-Hausdorff topology and there is a
  subsequence which converges to some compact metric space $(X, r)$.
  For the sake of convenience we denote this subsequence again by
  $((X_n, r_n))_n$. We have $\abs{X} \leq N$ and
  $\diam X \leq N$. By \cite[Lemma A.1]{GPW09} there is a compact
  metric space $(Z, r_Z)$ and isometric embeddings
  $\iota, \iota_1, \iota_2, \dotsc$ of $X, X_1, X_2, \dotsc$ into $Z$
  such that
  \begin{displaymath}
    d_H^Z(\iota_n(X_n), \iota(X)) \to 0.
  \end{displaymath}
  Here, $d_H^Z$ denotes the Hausdorff metric on $Z$. Because $Z$ is compact,
  both $\calM_{\leq N}(Z)$ and $\calM_{\leq N}(\calM_{\leq N}(Z))$ are
  compact too. Therefore, $((\iota_{n**}\nu_n))_n$ has a subsequence
  which converges weakly to some measure
  $\nu \in \calM_{\leq N}(\calM_{\leq N}(Z))$. Then, the same
  subsequence of $((X_n, r_n, \nu_n))_n$ converges to $(Z, r_Z, \nu)$
  and $(Z, r_Z, \nu) \cong (X, r, \nu) \in \A_N$.
\end{proof}

\begin{theorem}[Characterization of compact sets]\label{th:CompactnessEquivalencies}
  Let $\Gamma \subset \Mtwo$ be a set of m2m spaces. The following are
  equivalent:
  \begin{enumerate}
    \item\label{it:th:CompactnessEquivalencies1}
    $\Gamma$ is relatively compact in the two-level Gromov-Prokhorov
    topology.
    \item\label{it:th:CompactnessEquivalencies2} For every $\epsilon >
    0$ there exists an $N \in \N$ such that $\dtGP(\calX, \A_N) <
    \epsilon$ for every $\calX \in \Gamma$.
    \item\label{it:th:CompactnessEquivalencies4}
    $\set{\mass{}_*\nu | (X, r, \nu) \in \Gamma}$ is relatively
    compact in $\calM_f(\Rplus)$ and for every $K > 0$ we have
    \begin{itemize}
      \item $\sup_{(X, r, \nu) \in \Gamma} \MMD{\mm{f_K \cdot \nu}} \to 0$
       for $\delta \searrow 0$,
      \item $\set{ \DD{\mm{f_K \cdot \nu}} | (X, r, \nu) \in \Gamma}$ is
      relatively compact in $\calM_f(\Rplus)$.
    \end{itemize}
    \item\label{it:th:CompactnessEquivalencies3}
    $\set{\mass{}_*\nu | (X, r, \nu) \in \Gamma}$ is relatively
    compact in $\calM_f(\Rplus)$ and for every $\epsilon>0$ there is
    an $N_\epsilon \in \N$ such that for every
    $\calX = (X, r, \nu) \in \Gamma$ there exists a measurable subset
    $X_{\calX, \epsilon} \subset X$ with
    \begin{itemize}
      \item
      $\nu\left( \Complement \set{ \mu \in \calM_f(X) |
          \mu(\Complement X_{\calX, \epsilon}) \leq \epsilon }\right)
      < \epsilon$,
      \item $X_{\calX, \epsilon}$ can be covered by at most
      $N_\epsilon$ balls of radius $\epsilon$,
      \item the diameter of $X_{\calX, \epsilon}$ is at most
      $N_\epsilon$.
    \end{itemize}
    \item\label{it:th:CompactnessEquivalencies5}
    $\set{\mass{}_*\nu | (X, r, \nu) \in \Gamma}$ is relatively
    compact in $\calM_f(\Rplus)$ and for every $\epsilon>0$ and
    $\calX = (X, r, \nu) \in \Gamma$ there is a compact subset
    $C_{\calX, \epsilon} \subset X$ such that
    \begin{itemize}
      \item
      $\nu\left(\Complement \set{\mu \in \calM_f(X) | \mu(\Complement
        C_{\calX, \epsilon}) \leq \epsilon}\right) < \epsilon$,
      \item
      $\calC_\epsilon \asn \set{C_{\calX, \epsilon} | \calX \in
        \Gamma}$ is relatively compact in the Gromov-Hausdorff
      topology.
    \end{itemize}
  \end{enumerate}
\end{theorem}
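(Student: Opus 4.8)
The overall strategy is to prove two cycles of implications, using throughout that $(\Mtwo,\dtGP)$ is complete (Proposition~\ref{prp:dtGPIsPolishMetric}), so that relative compactness is the same as total boundedness. The first cycle, $\ref{it:th:CompactnessEquivalencies1}\Leftrightarrow\ref{it:th:CompactnessEquivalencies2}$, is short and rests only on the facts that the sets $\A_N$ are increasing in $N$, each is $\tautGP$-compact, and their union contains the dense set $\S$ from Proposition~\ref{prp:dtGPIsPolishMetric}. If $\Gamma$ is relatively compact, it is totally bounded, so for each $\epsilon>0$ it admits a finite $\epsilon$-net, which may be chosen inside $\bigcup_N\A_N$; being finite, this net lies in a single $\A_N$, whence $\dtGP(\calX,\A_N)<\epsilon$ for all $\calX\in\Gamma$. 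Conversely, if $\Gamma$ lies in the $\epsilon$-neighbourhood of the totally bounded set $\A_N$, then a finite $\epsilon$-net of $\A_N$ is a $2\epsilon$-net of $\Gamma$, so $\Gamma$ is totally bounded and, by completeness, relatively compact.

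The second cycle is $\ref{it:th:CompactnessEquivalencies1}\Rightarrow\ref{it:th:CompactnessEquivalencies4}\Rightarrow\ref{it:th:CompactnessEquivalencies3}\Rightarrow\ref{it:th:CompactnessEquivalencies5}\Rightarrow\ref{it:th:CompactnessEquivalencies1}$; I begin with the implication into condition~\ref{it:th:CompactnessEquivalencies4}. The map $\calX\mapsto\mass{}_*\nu$ is $\tautGw$-continuous (Remark~\ref{rm:MassIs2lvlGromovWeakContinuous}), and since $\tautGP$ is finer than $\tautGw$ (Lemma~\ref{lm:d2GPconvergenceImpliesWeakConvergence}) it is $\tautGP$-continuous, so it sends the relatively compact set $\Gamma$ to a relatively compact set in $\calM_f(\Rplus)$; the same argument applies to $\calX\mapsto\DD{\mm{f_K\cdot\nu}}$ by Lemma~\ref{lm:fKisContinuous}\ref{it:lm:fKisContinuous1b}. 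For the uniform modulus-of-mass-distribution bound I would run a Dini-type argument on the compact closure $\overline{\Gamma}$: for fixed $K$ the function $\calX\mapsto\MMD{\mm{f_K\cdot\nu}}$ is upper semi-continuous (Corollary~\ref{cor:tGwConvergenceAndMMD}) and non-decreasing in $\delta$ with $\MMD{\mm{f_K\cdot\nu}}\to0$ as $\delta\searrow0$ (Lemma~\ref{lm:MMDProperties}\ref{it:lm:MMDProperties_1}); choosing for each point a suitable $\delta$, passing to a finite subcover and taking the minimum $\delta_0$ yields $\sup_{\Gamma}\MMD[\delta_0]{\mm{f_K\cdot\nu}}$ arbitrarily small. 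This proves condition~\ref{it:th:CompactnessEquivalencies3}.

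From~\ref{it:th:CompactnessEquivalencies3} I would pass to the covering condition~\ref{it:th:CompactnessEquivalencies4} using Lemma~\ref{lm:MMDProperties}\ref{it:lm:MMDProperties_3}: relative compactness of $\set{\mass{}_*\nu}$ gives a uniform mass bound and tightness, so one may fix $K$ so large that $\nu(\set{\mass{\mu}>K/2})$ is uniformly small and $\mass{\mm{f_K\cdot\nu}}\leq K\mass{\nu}$ is uniformly bounded; then a $\delta$ with small $\MMD{\mm{f_K\cdot\nu}}$ produces a finite set $A$ of uniformly bounded cardinality $N_\epsilon$ with $\mm{f_K\cdot\nu}(\Complement B(A,\epsilon))$ small. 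Setting $X_{\calX,\epsilon}\asn B(A,\epsilon)$ gives the covering by $N_\epsilon$ balls, the diameter bound comes from tightness of $\set{\DD{\mm{f_K\cdot\nu}}}$, and the mass condition $\nu(\Complement\set{\mu|\mu(\Complement X_{\calX,\epsilon})\leq\epsilon})<\epsilon$ follows by writing $\mm{f_K\cdot\nu}(\Complement X_{\calX,\epsilon})=\int\mu(\Complement X_{\calX,\epsilon})f_K(\mu)\dif\nu(\mu)$, applying Markov's inequality, and comparing $f_K\cdot\nu$ with $\nu$ on $\set{\mass{\mu}\leq K/2}$ (cf.\ Lemma~\ref{lm:MomentMeasureIsSupportingMeasure}). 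The step $\ref{it:th:CompactnessEquivalencies4}\Rightarrow\ref{it:th:CompactnessEquivalencies5}$ then builds, at each fixed scale $\epsilon$, a compact core $C_{\calX,\epsilon}$ by intersecting the coverings $X_{\calX,2^{-k}}$ over all finer scales and closing up; the uniform bounds $N_{2^{-k}}$ make the resulting family uniformly totally bounded of uniformly bounded diameter, hence relatively compact in the Gromov--Hausdorff topology by \cite[Theorem 7.4.15]{BuragoBurago}, while summing the scale-$2^{-k}$ mass errors as a geometric series keeps the total $\nu$-mass error below $\epsilon$.

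The crux is the final implication $\ref{it:th:CompactnessEquivalencies5}\Rightarrow\ref{it:th:CompactnessEquivalencies1}$. Given a sequence $(\calX_n)_n\subset\Gamma$, I would fix a scale $\epsilon=1/k$, use Gromov--Hausdorff relative compactness of $\calC_{1/k}$ to extract a subsequence whose cores converge, embed the ambient spaces isometrically into one common Polish space via \cite[Lemma A.1]{GPW09}, and then exhibit the two-level push-forwards $\iota_{n**}\nu_n$ as a relatively compact family in $\calM_f(\calM_f(Z))$: relative compactness of $\set{\mass{}_*\nu}$ bounds the total masses and gives tightness in the mass variable, while the mass-concentration on the compact cores provides tightness in space. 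A diagonal argument over the scales $1/k$ (together with the truncation $f_K\cdot\nu$ needed because the first moment measures may be infinite) yields a weakly convergent subsequence of two-level push-forwards, which by Lemma~\ref{lm:d2GPconvergence} corresponds to a $\dtGP$-convergent subsequence of $(\calX_n)_n$. The main obstacle I anticipate is precisely this simultaneous bookkeeping of three limiting parameters—the Gromov--Hausdorff scale $\epsilon$, the mass truncation level $K$, and the sequence index $n$—since the compact cores vary with $\epsilon$ and the measures must be shown tight uniformly before the diagonal extraction can be carried out.
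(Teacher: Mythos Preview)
Your overall plan is sound and most of the individual steps match the paper in spirit, but the organization differs and, more importantly, the way you close the loop is harder than what the paper actually does.

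\textbf{Organizational difference.} The paper does not prove the implications among \ref{it:th:CompactnessEquivalencies1}--\ref{it:th:CompactnessEquivalencies3} directly for sets. Instead it invokes the companion Theorem~\ref{th:CompactNETSEquivalencies} about \emph{nets}: a set is relatively compact iff every net in it is compact (Lemma~\ref{lm:CompactNets}), and the four net-level conditions in Theorem~\ref{th:CompactNETSEquivalencies} correspond exactly to \ref{it:th:CompactnessEquivalencies1}--\ref{it:th:CompactnessEquivalencies3}. The only work done in the proof of Theorem~\ref{th:CompactnessEquivalencies} itself is \ref{it:th:CompactnessEquivalencies3}$\Leftrightarrow$\ref{it:th:CompactnessEquivalencies5}. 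Your direct set-level arguments (total boundedness for \ref{it:th:CompactnessEquivalencies1}$\Leftrightarrow$\ref{it:th:CompactnessEquivalencies2}, the Dini argument for the modulus, Lemma~\ref{lm:MMDProperties}\ref{it:lm:MMDProperties_3} and Markov for the covering) are legitimate alternatives and in fact slightly more elementary than the net versions.

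\textbf{Where the approaches genuinely diverge: closing the cycle.} You try to prove \ref{it:th:CompactnessEquivalencies5}$\Rightarrow$\ref{it:th:CompactnessEquivalencies1} directly, by embedding the ambient spaces into a common $Z$ and showing tightness of $(\iota_{n**}\nu_n)_n$ in $\calM_f(\calM_f(Z))$. This is where you yourself flag the obstacle, and it is real: the compact cores $C_{\calX_n,\epsilon}$ vary with both $n$ and the scale $\epsilon$, and \cite[Lemma~A.1]{GPW09} produces one embedding for one GH-convergent sequence, so making a single embedding work for all scales simultaneously takes extra care. The paper sidesteps this entirely. It notes that \ref{it:th:CompactnessEquivalencies5}$\Rightarrow$\ref{it:th:CompactnessEquivalencies3} is almost immediate from the Burago--Burago--Ivanov characterization of GH-precompactness (uniform diameter bound and uniform covering numbers), and then in Theorem~\ref{th:CompactNETSEquivalencies} closes the loop via \ref{it:th:CompactnessEquivalencies3}$\Rightarrow$\ref{it:th:CompactnessEquivalencies2}: given the covering by $N_\epsilon$ balls with centres $x_1^{(\alpha)},\dotsc$, one defines an explicit discretization map $F_\alpha$ sending each point to the nearest centre, checks $\dP(\nu_\alpha,F_{\alpha**}\nu_\alpha)\leq\epsilon$ using Lemma~\ref{lm:pushforwardproperties}\ref{it:lm:pushforwardproperties5}, truncates the masses, and lands in $\A_N$. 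No common embedding space, no Prokhorov-in-$\calM_f(\calM_f(Z))$, no diagonal over scales. If you adopt this discretization step in place of your \ref{it:th:CompactnessEquivalencies5}$\Rightarrow$\ref{it:th:CompactnessEquivalencies1}, the bookkeeping difficulty you anticipate disappears.

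\textbf{Minor point on \ref{it:th:CompactnessEquivalencies3}$\Rightarrow$\ref{it:th:CompactnessEquivalencies5}.} Your construction of $C_{\calX,\epsilon}$ as the closure of $\bigcap_k X_{\calX,2^{-k}}$ does work (closed and totally bounded in a complete space). The paper instead intersects each $X_{\calX,\epsilon_n}$ with a genuinely compact $C_{\calX,\epsilon_n}\subset X$ obtained from tightness of the single measure $\nu$ on the Polish space $\calM_f(X)$; this gives compactness for free without appealing to completeness of $(X,r)$. Either route is fine.
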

Note that in assertion \ref{it:th:CompactnessEquivalencies4} it
suffices to have the property only for a diverging sequence
$(K_n)_n \nearrow \infty$.

\begin{proof}[of Theorem~\ref{th:CompactnessEquivalencies}]

  The equivalence of assertions \ref{it:th:CompactnessEquivalencies1}
  to \ref{it:th:CompactnessEquivalencies3} follows easily from
  Theorem~\ref{th:CompactNETSEquivalencies} and the fact that a set is
  relatively compact if and only if every net in it is compact (\cf
  Lemma~\ref{lm:CompactNets}). In the remainder of this proof we show
  that assertions \ref{it:th:CompactnessEquivalencies3} and
  \ref{it:th:CompactnessEquivalencies5} are equivalent.

  Note that a set $\calC$ of compact metric spaces is relatively
  compact in the Gromov-Hausdorff-topology if and only if the diameter
  of the elements of $\calC$ is uniformly bounded and for every
  $\epsilon>0$ there is an $N \in \N$ such that every element of
  $\calC$ can be covered by at most $N$ balls of radius $\epsilon$.
  This fact can be found in \cite[Section 7.4.2]{BuragoBurago}.
  Therefore, assertion \ref{it:th:CompactnessEquivalencies5} readily
  implies assertion \ref{it:th:CompactnessEquivalencies3}. 

  To prove the other direction, let $\epsilon>0$. For $n \in \N$ let
  $\epsilon_n \asn \frac{\epsilon}{2} \cdot
  \left(\frac{1}{2}\right)^n$ and let $N_{\epsilon_n}$ and
  $X_{\calX, \epsilon_n}$ be as in assertion
  \ref{it:th:CompactnessEquivalencies3}. Without loss of generality we
  may assume that every $X_{\calX, \epsilon_n}$ is closed. For every
  $\calX = (X, r, \nu) \in \Gamma$ and every $n \in \N$ there is a
  compact set $\calK_{\calX, \epsilon_n} \subset \calM_f(X)$ with
  $\nu(\Complement \calK_{\calX, \epsilon_n}) < \epsilon_n$. Because
  $\calK_{\calX, \epsilon_n}$ is tight, we have
  \begin{align*}
    \calK_{\calX, \epsilon_n} \subset \set{\mu \in \calM_f(X) |
    \mu(\Complement C_{\calX, \epsilon_n}) < \epsilon_n}
  \end{align*}
  for some compact set $C_{\calX, \epsilon_n} \subset X$ and thus
  \begin{align*}
    \nu\left( \Complement \set{\mu \in \calM_f(X) | \mu(\Complement
    C_{\calX, \epsilon_n}) \leq \epsilon_n}\right) \leq
    \nu(\Complement \calK_{\calX, \epsilon_n}) < \epsilon_n. 
  \end{align*}
  Define
  $C_{\calX, \epsilon} \asn \bigcap_{n \in \N} (X_{\calX, \epsilon_n}
  \cap C_{\calX, \epsilon_n})$. Then $C_{\calX, \epsilon}$ is compact
  as a closed subset of a compact set and we have
  \begin{align*}
    \nu&\left(\Complement \set{\mu \in \calM_f(X) | \mu(\Complement
         C_{\calX, \epsilon}) \leq \epsilon}\right) \\
       &\leq \nu\left( \Complement {\textstyle\bigcap\limits_{n \in \N}} \set{\mu \in \calM_f(X) |
         \mu(\Complement X_{\calX, \epsilon_n}) \leq \epsilon_n,\
         \mu(\Complement C_{\calX, \epsilon_n}) \leq \epsilon_n }\right) \\ 
       &\leq \sum_{n \in \N} \Big(\nu\left(\Complement \set{ \mu \in
         \calM_f(X) | \mu(\Complement X_{\calX, \epsilon_n}) \leq
         \epsilon_n}\right) \\
       &\phantom{\leq}+ \nu\left( \Complement \set{\mu \in \calM_f(X) | \mu(\Complement
         C_{\calX, \epsilon_n}) \leq \epsilon_n}\right) \Big) \\
    &< 2 \sum_{n
         \in \N} \epsilon_n = \epsilon. 
  \end{align*}
  The set
  $\calC_\epsilon = \set{C_{\calX, \epsilon} | \calX \in \Gamma}$ is
  relatively compact in the Gromov-Hausdorff-topology because the
  diameter of each $C_{\calX, \epsilon}$ is bounded by
  $N_{\epsilon_1}$ and for every $\delta>0$ there is a natural number
  $N$ such that each $C_{\calX, \epsilon}$ can be covered by no more
  than $N$ balls of radius $\delta$ (\cf the remark at the beginning
  of this proof). Thus, assertion
  \ref{it:th:CompactnessEquivalencies5} is fulfilled and the proof
  is complete.
\end{proof}

\begin{theorem}[Characterization of compact
  nets]\label{th:CompactNETSEquivalencies}
  Let $(\calA, \preceq)$ be a directed set and let
  $(\calX_\alpha)_{\alpha \in \calA}$ be a net in $\Mtwo$ with
  $\calX_\alpha = (X_\alpha, r_\alpha, \nu_\alpha)$. The following are
  equivalent:
\begin{enumerate}
  \item\label{it:th:CompactNETSEquivalencies1} $(\calX_\alpha)_\alpha$
  is a compact net with respect to the two-level Gromov-Prokhorov
  topology.
  \item\label{it:th:CompactNETSEquivalencies2} For every $\epsilon>0$
  there is an $N \in \N$ such that $\dtGP(\calX_\alpha, \A_N) <
  \epsilon$ eventually.
  \item\label{it:th:CompactNETSEquivalencies3}
  $(\mass{}_*\nu_\alpha)_\alpha$ is a compact net in $\calM_f(\Rplus)$
  and for every $K > 0$ we have:
  \begin{itemize}
    \item For every $\epsilon>0$ there is a $\delta>0$ such that
    $\MMD{\mm{f_K \cdot \nu_\alpha}} < \epsilon$ eventually,
    \item $(\DD{\mm{f_K \cdot \nu_\alpha}})_\alpha$ is a compact net in
    $\calM_f(\Rplus)$.
  \end{itemize}
  \item\label{it:th:CompactNETSEquivalencies4}
  $(\mass{}_*\nu_\alpha)_\alpha$ is a compact net and for every
  $\epsilon > 0$ there is an $N_\epsilon \in \N$ such that for every
  $\alpha$ there is a set $X_{\alpha, \epsilon} \subset X_\alpha$ such
  that we eventually have
  \begin{itemize}
    \item
    $\nu_\alpha\left(\Complement\set{\mu \in \calM_f(X_\alpha) |
        \mu(\Complement X_{\alpha, \epsilon}) \leq \epsilon}\right)<
    \epsilon$,
    \item $X_{\alpha, \epsilon}$ can be covered by at most
    $N_\epsilon$ balls of radius $\epsilon$,
    \item the diameter of $X_{\alpha, \epsilon}$ is at most
    $N_\epsilon$.
  \end{itemize}
\end{enumerate}
\end{theorem}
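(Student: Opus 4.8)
The plan is to prove the four statements equivalent through the cycle \ref{it:th:CompactNETSEquivalencies1} $\Rightarrow$ \ref{it:th:CompactNETSEquivalencies3} $\Rightarrow$ \ref{it:th:CompactNETSEquivalencies4} $\Rightarrow$ \ref{it:th:CompactNETSEquivalencies2} $\Rightarrow$ \ref{it:th:CompactNETSEquivalencies1}, using repeatedly that every subnet of a net satisfying one of the ``eventually''-type conditions again satisfies it, and that a continuous image of a compact net is a compact net. For \ref{it:th:CompactNETSEquivalencies1} $\Rightarrow$ \ref{it:th:CompactNETSEquivalencies3} the mass and distance-distribution parts are immediate, since the maps $(X,r,\nu) \mapsto \mass{}_*\nu$ and $(X,r,\nu) \mapsto \DD{\mm{f_K\cdot\nu}}$ into $\calM_f(\Rplus)$ are continuous by Remark~\ref{rm:MassIs2lvlGromovWeakContinuous} and Lemma~\ref{lm:fKisContinuous}, hence send the compact net $(\calX_\alpha)_\alpha$ to compact nets. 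For the modulus-of-mass part I fix $K$ and $\epsilon$ and argue by contradiction: if it fails, then for every $\delta>0$ the net is frequently in $\set{\alpha : \MMD[\delta]{\mm{f_K\cdot\nu_\alpha}} \ge \epsilon}$. Using that $\delta \mapsto \MMD[\delta]{\mu}$ is nondecreasing (Lemma~\ref{lm:MMDProperties}~\ref{it:lm:MMDProperties_1}), I would pass to the subnet indexed by the pairs $(\alpha,\delta)$ lying in this set, ordered so that $\delta \searrow 0$, extract a $\tautGP$-convergent sub-subnet with limit $(X,r,\nu)$, and conclude from upper semicontinuity (Corollary~\ref{cor:tGwConvergenceAndMMD}) that $\MMD[\delta^*]{\mm{f_K\cdot\nu}} \ge \epsilon$ for every $\delta^*>0$, contradicting $\lim_{\delta^*\searrow 0}\MMD[\delta^*]{\mm{f_K\cdot\nu}} = 0$.

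The implication \ref{it:th:CompactNETSEquivalencies3} $\Rightarrow$ \ref{it:th:CompactNETSEquivalencies4} carries the technical weight. Fix $\epsilon>0$. Since $(\mass{}_*\nu_\alpha)_\alpha$ is a compact, hence tight net with bounded mass (Prokhorov, Proposition~\ref{prp:ProkhorovsTheorem}), I pick $K_0$ with $\nu_\alpha(\set{\mass{\mu} > K_0/2}) < \epsilon/2$ eventually and a bound $\mass{\nu_\alpha} \le C$ eventually. On $\set{\mass{\mu}\le K_0/2}$ one has $g_{K_0}(\mass{\mu}) = 1$, so the restriction of $\mu \mapsto \mu(\wildcard)$ integrated against $\nu_\alpha$ on that set is dominated by $\mm{f_{K_0}\cdot\nu_\alpha}$. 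Choosing $\delta$ from the modulus condition for $K_0$ so that $\MMD[\delta]{\mm{f_{K_0}\cdot\nu_\alpha}} < \epsilon'$ eventually (with $\epsilon'<\epsilon^2/2$ fixed below), Lemma~\ref{lm:MMDProperties}~\ref{it:lm:MMDProperties_3} produces a finite set $A_\alpha$ with $\abs{A_\alpha} \le \max(1, \mass{\mm{f_{K_0}\cdot\nu_\alpha}}/\delta)$ and $\mm{f_{K_0}\cdot\nu_\alpha}(\Complement B(A_\alpha,\epsilon')) < \epsilon'$; I set $X_{\alpha,\epsilon} \asn B(A_\alpha,\epsilon')$. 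The cardinality is uniformly bounded because $\mass{\mm{f_{K_0}\cdot\nu_\alpha}} \le K_0\,\mass{\nu_\alpha} \le K_0 C$, and a Markov estimate on the good-mass region gives $\nu_\alpha(\set{\mu : \mass{\mu}\le K_0/2,\ \mu(\Complement X_{\alpha,\epsilon}) > \epsilon}) \le \epsilon'/\epsilon$; together with the large-mass part this yields $\nu_\alpha(\Complement\set{\mu : \mu(\Complement X_{\alpha,\epsilon}) \le \epsilon}) < \epsilon/2 + \epsilon'/\epsilon < \epsilon$. The diameter bound is the delicate point: each $x \in A_\alpha$ satisfies $\mm{f_{K_0}\cdot\nu_\alpha}(B(x,\epsilon')) > \delta$, so if two points of $A_\alpha$ were more than $L+2\epsilon'$ apart the product of their $\epsilon'$-balls would force $\DD{\mm{f_{K_0}\cdot\nu_\alpha}}((L,\infty)) > \delta^2$; since $(\DD{\mm{f_{K_0}\cdot\nu_\alpha}})_\alpha$ is a compact, hence tight net, choosing $L$ with $\DD{\mm{f_{K_0}\cdot\nu_\alpha}}((L,\infty)) < \delta^2$ eventually bounds $\diam X_{\alpha,\epsilon} \le L + 4\epsilon'$. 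Taking $N_\epsilon$ larger than $K_0C/\delta$ and $L+4\epsilon'$ gives \ref{it:th:CompactNETSEquivalencies4}.

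For \ref{it:th:CompactNETSEquivalencies4} $\Rightarrow$ \ref{it:th:CompactNETSEquivalencies2} I fix $\epsilon$ and set $\epsilon' = \epsilon/4$. I cover $X_{\alpha,\epsilon'}$ by at most $N_{\epsilon'}$ balls of radius $\epsilon'$, pick one point of $X_{\alpha,\epsilon'}$ in each, and let $\pi_\alpha \from X_\alpha \to X_\alpha$ send every point to a nearest such representative; its finite image $F_\alpha$ has at most $N_{\epsilon'}$ points and diameter at most $N_{\epsilon'}$. For $\mu$ with $\mu(\Complement X_{\alpha,\epsilon'}) \le \epsilon'$ the Prokhorov push-forward estimate Lemma~\ref{lm:pushforwardproperties}~\ref{it:lm:pushforwardproperties5} gives $\dP(\pi_{\alpha*}\mu,\mu) \le 2\epsilon'$, and applying the same estimate once more at the level of $\calM_f(X_\alpha)$ yields $\dP(\pi_{\alpha**}\nu_\alpha, \nu_\alpha) \le 2\epsilon'$. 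After restricting $\nu_\alpha$ to $\set{\mass{\mu}\le N_0}$ for a suitable $N_0$ obtained from mass tightness (an extra $\dP$-cost below $\epsilon'$) the resulting finite m2m space lies in a fixed $\A_N$ and is within $\epsilon$ of $\calX_\alpha$ eventually, which is \ref{it:th:CompactNETSEquivalencies2}. Finally, for \ref{it:th:CompactNETSEquivalencies2} $\Rightarrow$ \ref{it:th:CompactNETSEquivalencies1}, since subnets inherit \ref{it:th:CompactNETSEquivalencies2} it suffices to produce a cluster point of any net satisfying \ref{it:th:CompactNETSEquivalencies2}. For each $k$, condition \ref{it:th:CompactNETSEquivalencies2} with $\epsilon = 1/k$ together with compactness of $\A_{N_k}$ (shown in the lemma preceding this theorem) places the net eventually inside a finite union of balls of radius $2/k$; refining at each scale a cofinal index set on which the net stays in a single such ball produces nested regions of vanishing diameter, hence a Cauchy sequence of centers, which converges by completeness of $(\Mtwo,\dtGP)$ (Proposition~\ref{prp:dtGPIsPolishMetric}) to a cluster point. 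By the cluster-point criterion for nets this yields a convergent subnet and closes the cycle.

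The main obstacle is the diameter estimate in \ref{it:th:CompactNETSEquivalencies3} $\Rightarrow$ \ref{it:th:CompactNETSEquivalencies4}: turning tightness of the distance distributions $\DD{\mm{f_{K_0}\cdot\nu_\alpha}}$ into a \emph{uniform} bound on $\diam X_{\alpha,\epsilon}$, while keeping the bookkeeping between the truncated moment measure $\mm{f_{K_0}\cdot\nu_\alpha}$ and $\nu_\alpha$ (via the mass split at $K_0/2$ and Markov's inequality) consistent across all the ``eventually'' qualifiers. The delicate net constructions in \ref{it:th:CompactNETSEquivalencies1} $\Rightarrow$ \ref{it:th:CompactNETSEquivalencies3} and \ref{it:th:CompactNETSEquivalencies2} $\Rightarrow$ \ref{it:th:CompactNETSEquivalencies1} are secondary but must be handled with care, as one may not fall back on sequences here.
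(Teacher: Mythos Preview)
Your proposal is correct and follows the same implication cycle as the paper, with the same essential ingredients: continuity of $(X,r,\nu)\mapsto\mass{}_*\nu$ and $(X,r,\nu)\mapsto\DD{\mm{f_K\cdot\nu}}$ for the ``easy'' parts, upper semicontinuity of the modulus for the contradiction in \ref{it:th:CompactNETSEquivalencies1}$\Rightarrow$\ref{it:th:CompactNETSEquivalencies3}, Lemma~\ref{lm:MMDProperties}\ref{it:lm:MMDProperties_3} plus a Markov estimate for \ref{it:th:CompactNETSEquivalencies3}$\Rightarrow$\ref{it:th:CompactNETSEquivalencies4}, the push-forward bound of Lemma~\ref{lm:pushforwardproperties}\ref{it:lm:pushforwardproperties5} for \ref{it:th:CompactNETSEquivalencies4}$\Rightarrow$\ref{it:th:CompactNETSEquivalencies2}, and a Cauchy-subnet construction via the compact $\A_N$ for \ref{it:th:CompactNETSEquivalencies2}$\Rightarrow$\ref{it:th:CompactNETSEquivalencies1}.

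The one noteworthy difference is your treatment of the diameter bound in \ref{it:th:CompactNETSEquivalencies3}$\Rightarrow$\ref{it:th:CompactNETSEquivalencies4}. The paper separates this into two independent claims: a bounded set $C_\alpha$ built from the tightness of $\DD{\mm{f_K\cdot\nu_\alpha}}$ (Claim~1) and a finite $\epsilon$-net $A_\alpha$ built from the modulus condition (Claim~2), then intersects. You instead observe that the finite set $A_\alpha$ produced by Lemma~\ref{lm:MMDProperties}\ref{it:lm:MMDProperties_3} already lies in the ``thick'' region $\set{x:\mm{f_{K_0}\cdot\nu_\alpha}(B(x,\epsilon'))>\delta}$, so any two of its points at distance $>L+2\epsilon'$ would force $\DD{\mm{f_{K_0}\cdot\nu_\alpha}}((L,\infty))>\delta^2$, and tightness of the distance distribution then bounds $\diam A_\alpha$ directly. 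This is more economical, but note that the property ``$A_\alpha$ lies in the thick region'' is not part of the \emph{statement} of Lemma~\ref{lm:MMDProperties}\ref{it:lm:MMDProperties_3}; it comes from its proof (where $A$ is chosen as a maximal $\epsilon$-separated subset of $D$), so you should say so explicitly. Two further cosmetic points: in \ref{it:th:CompactNETSEquivalencies4}$\Rightarrow$\ref{it:th:CompactNETSEquivalencies2} make $\pi_\alpha$ measurable by assigning each point to the representative of the \emph{first} ball containing it (``nearest representative'' can fail measurability); and in \ref{it:th:CompactNETSEquivalencies2}$\Rightarrow$\ref{it:th:CompactNETSEquivalencies1} speak of passing to successive \emph{subnets} rather than ``cofinal index sets'', since restricting a net to a cofinal subset need not yield a subnet in general.
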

\begin{proof}
  \ref{it:th:CompactNETSEquivalencies1} $\Rightarrow$
  \ref{it:th:CompactNETSEquivalencies2}: We prove this assertion by
  contradiction and assume \ref{it:th:CompactNETSEquivalencies2}
  does not hold. That is, there is an $\epsilon>0$ such that for every
  $N \in \N$ the distance $\dtGP(\calX_\alpha, \A_N)$ is frequently
  greater than or equal to $\epsilon$. Therefore, there is a subnet
  $(\calX_{T(\beta)})_\beta$ of $(\calX_\alpha)_\alpha$ (with a
  directed set $\calB$ and a function $T$ from $\calB$ to $\calA$) such
  that $\dtGP(\calX_{T(\beta)}, \A_N)$ is eventually at most
  $\epsilon$ for every positive integer $N$.

  Since $(\calX_{T(\beta)})_\beta$ is compact, it has a convergent
  subnet. Let $\calX$ be the limit of this subnet. Observe that the
  set $\bigcup_{N \in \N} \A_N$ is dense in $\Mtwo$ since it contains
  the dense set $\S$ from the proof of
  Proposition~\ref{prp:dtGPIsPolishMetric}. Thus, there is a natural
  number $N_0$ with $\dtGP(\calX, \A_{N_0})<\epsilon$. Consequently,
  the subnet converging to $\calX$ is eventually $\epsilon$-close to
  $\A_{N_0}$. But this contradicts the construction of the net
  $(\calX_{T(\beta)})_\beta$.

  \ref{it:th:CompactNETSEquivalencies2} $\Rightarrow$
  \ref{it:th:CompactNETSEquivalencies1}: Observe that assertion
  \ref{it:th:CompactNETSEquivalencies2} also holds for any subnet of
  $(\calX_\alpha)_\alpha$. Thus, it is enough to show that any net
  with \ref{it:th:CompactNETSEquivalencies2} has a convergent subnet.
  Inductively we can construct subnets
  $(\calX_{T_n(\beta)})_{\beta \in \calB_n}$ for each $n \in \N$ such
  that $(\calX_{T_{n}(\beta)})_{\beta \in \calB_{n}}$ is a subnet of
  $(\calX_{T_{n-1}(\beta)})_{\beta \in \calB_{n-1}}$ and eventually
  contained in a ball of radius $\nicefrac{1}{n}$. By a diagonal
  argument we can construct a subnet that is a Cauchy net. That is,
  for every $\epsilon>0$ the subnet is eventually contained in a ball
  of radius $\epsilon$. Since $(\Mtwo, \dtGP)$ is complete, this
  subnet net is convergent.

  \ref{it:th:CompactNETSEquivalencies1} $\Rightarrow$
  \ref{it:th:CompactNETSEquivalencies3}: Note that both functions
  $(X, r, \nu) \mapsto \mass{}_*\nu$ and
  $(X, r, \nu) \mapsto \DD{\mm{f_K\cdot\nu}}$ are continuous (see
  Remark~\ref{rm:MassIs2lvlGromovWeakContinuous} and
  Lemma~\ref{lm:fKisContinuous}) and that the continuous image of a
  compact net is again compact. Thus, $(\mass{}_*\nu_\alpha)_\alpha$
  and $(\DD{\mm{f_K \cdot \nu_\alpha}})_\alpha$ are both compact.

  To prove the last property, assume that it does not hold, \ie there
  are $K, \epsilon > 0$ such that
  $\MMD{\mm{f_K \cdot \nu_\alpha}} \geq \epsilon$ frequently for every
  $\delta>0$. Inductively we can construct subnets
  $(\calX_{T_n(\beta)})_{\beta \in \calB_n}$ for each $n \in \N$ such
  that $(\calX_{T_{n}(\beta)})_{\beta \in \calB_{n}}$ is a subnet of
  $(\calX_{T_{n-1}(\beta)})_{\beta \in \calB_{n-1}}$ and
  $\MMD[\frac{1}{n}]{\mm{f_K\cdot \nu_{T_n(\beta)}}} \geq \epsilon$
  eventually. By a diagonal argument we can construct a subnet
  $(\calX_{T(\beta)})_{\beta \in \calB}$ such that
  $\MMD{\mm{f_K \cdot \nu_\beta}} \geq \epsilon$ eventually for every
  $\delta>0$. By Corollary \ref{cor:tGwConvergenceAndMMD} this subnet
  cannot have a convergent subnet in contradiction to
  \ref{it:th:CompactNETSEquivalencies1}.

  \ref{it:th:CompactNETSEquivalencies3} $\Rightarrow$
  \ref{it:th:CompactNETSEquivalencies4}: Let $1>\epsilon>0$. The net
  $(\mass{}_*\nu_\alpha)_\alpha$ is compact and thus tight. Therefore,
  there is a $K$ such that eventually
  $\mass{\nu_\alpha} - \mass{f_K \cdot \nu_\alpha} <
  \nicefrac{\epsilon}{3}$. Then, assertion
  \ref{it:th:CompactNETSEquivalencies4} is a consequence of the
  following two claims, which we will prove later.

  \emph{Claim 1:} There is a positive integer $N_1$ and for every
  $\alpha \in \calA$ there is a bounded set
  $C_\alpha \subset X_\alpha$ with $\diam C_\alpha \leq N_1$ such that
  eventually
  \begin{equation}\label{eq:pr:th:CompactNETSEquivalencies1}
    f_K \cdot \nu_\alpha\left(\Complement\set{\mu \in \calM_f(X_\alpha) |
      \mu(\Complement C_\alpha) \leq \frac{\epsilon}{3}}\right) \leq \frac{\epsilon}{3}.
  \end{equation}

  \emph{Claim 2:} There is a positive integer $N_2$ and for every
  $\alpha \in \calA$ there is a finite set
  $A_{\alpha} \subset X_\alpha$ with $\abs{A_\alpha} \leq N_2$ such
  that eventually
  \begin{equation}\label{eq:pr:th:CompactNETSEquivalencies2}
    f_K\cdot\nu_\alpha \left(\Complement\set{\mu \in \calM_f(X_\alpha) |
      \mu(\Complement B(A_\alpha, \epsilon)) < \frac{\epsilon}{3}}\right) <
    \frac{\epsilon}{3}. 
  \end{equation}
  
  With these two claims we can define the subsets $X_{\alpha, \epsilon}
  = C_\alpha \cap B(A_\alpha, \epsilon)$ and $N_\epsilon = \max(N_1,
  N_2)$. Then eventually the set $X_{\alpha, \epsilon}$ has diameter of at
  most $N_\epsilon$ and can be covered by at most $N_\epsilon$ balls
  of radius $\epsilon$. Moreover, eventually we have
  \begin{align*}
    \nu_\alpha&\left(\Complement\set{\mu \in \calM_f(X_\alpha) |
    \mu(\Complement X_{\alpha, \epsilon}) \leq \epsilon}\right) \\
    &< \big(\mass{\nu_\alpha} - \mass{f_K \cdot \nu_\alpha}\big) + f_K \cdot \nu_\alpha\big(\Complement\set{\mu
      \in \calM_f(X_\alpha) | \mu(\Complement X_{\alpha, \epsilon}) \leq
      \epsilon}\big) \\
    &\leq \frac{\epsilon}{3} + f_K \cdot \nu_\alpha\left(\Complement\set{\mu
      \in \calM_f(X_\alpha) | \mu(\Complement C_{\alpha}) \leq
      \frac{\epsilon}{3}}\right) \\
    &\phantom{<}+ f_K \cdot \nu_\alpha\left(\Complement\set{\mu
      \in \calM_f(X_\alpha) | \mu(\Complement B(A_{\alpha}, \epsilon) <
      \frac{\epsilon}{3}}\right) \\
    &< \epsilon,
  \end{align*}
  and the assertion is proved.

  \emph{Proof of Claim 1:} The net
  $(\DD{\mm{f_K \cdot \nu_\alpha}})_\alpha$ is compact and thus tight.
  Therefore, there is an $a>0$ such that eventually
  \begin{displaymath}
    \DD{\mm{f_K \cdot \nu_\alpha}}([a, \infty)) < \frac{1}{2}
    \left(\frac{\epsilon}{3}\right)^4.
  \end{displaymath}
  Let $N_1$ be a positive integer with $2a \leq N_1$. We will
  construct bounded sets $C_\alpha \subset X_\alpha$ with
  $\diam C_\alpha < 2a \leq N_1$ such that eventually
  \eqref{eq:pr:th:CompactNETSEquivalencies1} holds. If
  \begin{displaymath}
    f_K\cdot\nu_\alpha\left(\Complement\set{\mu \in \calM_f(X_\alpha) |
      \mass{\mu} \leq \frac{\epsilon}{3}}\right) \leq \frac{\epsilon}{3},
  \end{displaymath}
  then this is satisfied for $C_\alpha \asn B(x, a)$ for any
  $x \in X_\alpha$. In the case where
  \begin{equation}\label{eq:pr:th:CompactNETSEquivalencies3}
    f_K\cdot\nu_\alpha\left(\Complement\set{\mu \in \calM_f(X_\alpha) |
      \mass{\mu} \leq \frac{\epsilon}{3}}\right) > \frac{\epsilon}{3}
  \end{equation}
  we define
  $C_\alpha \asn \set{x \in X_\alpha | \mm{f_K \cdot
      \nu_\alpha}(\Complement B(x, a)) <
    \frac{1}{2}\left(\frac{\epsilon}{2}\right)^2}$. Then,
  $\diam C_\alpha < 2a$ follows by the following contradiction: If
  there are $x, y \in C_\alpha$ with $r_\alpha(x, y) \geq 2a$ we have
  \begin{align*}
    \left(\frac{\epsilon}{2}\right)^2 &>
    \mm{f_K\cdot\nu_\alpha}(\Complement B(x, a)) +
    \mm{f_K\cdot\nu_\alpha}(\Complement B(y, a)) \\
    &\geq \mm{f_K\cdot\nu_\alpha}\big(\Complement (B(x, a) \cap B(y, a))\big) \\
    &= \mm{f_K\cdot\nu_\alpha}(X_\alpha) = \int \mu(X_\alpha) \dif(f_K
      \cdot \nu_\alpha)(\mu) \\
    &\geq \frac{\epsilon}{3}
      \Big(f_K\cdot\nu_\alpha\Big)\left(\set{\mu \in \calM_f(X_\alpha) | \mass{\mu} > \frac{\epsilon}{3}}\right).
  \end{align*}
  This contradicts \eqref{eq:pr:th:CompactNETSEquivalencies3}, so the
  diameter of $C_\alpha$ is less than $2a$.

  Furthermore, we eventually have
  \begin{align*}
    \frac{1}{2}\left(\frac{\epsilon}{3}\right)^4 &> \DD{\mm{f_K \cdot
    \nu_\alpha}}([a, \infty)) \\
    &= (\mm{f_K\cdot\nu_\alpha})^{\otimes
    2}(\set{ (x,y) \in X_\alpha^2 | y \not\in B(x,a)}) \\
    &\geq (\mm{f_K\cdot\nu_\alpha})^{\otimes
    2}(\set{ (x,y) \in X_\alpha^2 | x \not\in C_\alpha, y \not\in
      B(x,a)}) \\
    &\geq \frac{1}{2}\left(\frac{\epsilon}{3}\right)^2 \mm{f_K \cdot \nu_\alpha}(\Complement C_\alpha).
  \end{align*}
  In the last inequality we used the fact that
  \begin{displaymath}
    \mm{f_K\cdot\nu_\alpha}(\Complement B(x,a)) \geq \frac{1}{2}
    \left(\frac{\epsilon}{2}\right)^2 
  \end{displaymath}
  for $x\not\in C_\alpha$ by the very definition of $C_\alpha$. We
  conclude that eventually we have
  \begin{align*}
    \left(\frac{\epsilon}{2}\right)^2 &>
    \mm{f_K\cdot\nu_\alpha}(\Complement C_\alpha) = \int \mu(\Complement
    C_\alpha) \dif(f_K\cdot\nu_\alpha)(\mu) \\
    &\geq \frac{\epsilon}{3}
    \Big(f_K\cdot\nu_\alpha\Big)\left(\set{\mu \in \calM_f(X_\alpha) |
    \mu(\Complement C_\alpha) > \frac{\epsilon}{3}}\right)
  \end{align*}
  and this yields \eqref{eq:pr:th:CompactNETSEquivalencies1}.

  \emph{Proof of Claim 2:} Because $(\mass{}_*\nu_\alpha)_\alpha$ is a
  compact net, $\mass{\nu_\alpha}$ is eventually bounded by some
  positive real number $m$. By assumption there is a $\delta>0$ such
  that eventually
  $\MMD{\mm{f_K \cdot \nu_\alpha}} < \frac{\epsilon^2}{9}$. Set
  $N_2 \asn \max(1, \lfloor\frac{Km}{\delta}\rfloor)$. By
  Lemma~\ref{lm:MMDProperties} we can find for every
  $\alpha \in \calA$ a finite set $A_\alpha \subset X_\alpha$ with
  $\abs{A_\alpha} \leq N_2$ such that eventually
  \begin{align*}
    \frac{\epsilon^2}{9} &> \mm{f_K\cdot \nu_\alpha}(\Complement
    B(A_\alpha, \frac{\epsilon^2}{9})) \\
    &\geq \mm{f_K\cdot \nu_\alpha}(\Complement B(A_\alpha, \epsilon)) \\
    &= \int \mu(\Complement B(A_\alpha, \epsilon))
    \dif(f_K \cdot \nu_\alpha)(\mu) \\
    &\geq \frac{\epsilon}{3} \Big(f_K\cdot\nu_\alpha\Big)\left(\set{\mu \in
      \calM_f(X_\alpha) | \mu(\Complement B(A_\alpha, \epsilon)) 
      \geq \frac{\epsilon}{3} }\right).
  \end{align*}
  This leads to the desired inequality
  \eqref{eq:pr:th:CompactNETSEquivalencies2}.

  \ref{it:th:CompactNETSEquivalencies4} $\Rightarrow$
  \ref{it:th:CompactNETSEquivalencies2}: Let $\epsilon>0$ be
  arbitrary and let $N_\epsilon$ and $X_{\alpha, \epsilon}$ be as in
  assertion \ref{it:th:CompactNETSEquivalencies4}.
  $X_{\alpha, \epsilon}$ can eventually be covered by
  $N_\alpha \leq N_\epsilon$ balls
  $B(x_1^{(\alpha)}, \epsilon), \dotsc, B(x_{N_\alpha}^{(\alpha)},
  \epsilon)$. Define a function $F_\alpha \from X_\alpha \to X_\alpha$ by
  \begin{align*}
    F_\alpha(x) =
    \begin{cases}
      x_1^{(\alpha)}, &\text{ if $x \in B(x_1^{(\alpha)}, \epsilon)$ or
        $x \not\in \bigcup_{j=1}^{N_\alpha}B(x_j^{(\alpha)}, \epsilon)$} \\
      x_i^{(\alpha)}, &\text{ if $x \in B(x_i^{(\alpha)}, \epsilon) \setminus
        \bigcup_{j=1}^{i-1}B(x_j^{(\alpha)}, \epsilon)$ for $i \in
        \set{2, \dotsc, N_\alpha}$}.
    \end{cases}
  \end{align*}
  By assertion~\ref{it:lm:pushforwardproperties5} of
  Lemma~\ref{lm:pushforwardproperties} we have
  $\dP(\mu, F_{\alpha*}\mu)\leq \epsilon$ for every
  $\mu \in \calM_f(X_\alpha)$ with
  $\mu(\Complement X_{\alpha, \epsilon}) \leq \epsilon$. Thus,
  eventually we have
  $\dP(\nu_\alpha, F_{\alpha**}\nu_\alpha) \leq \epsilon$.

  Because $(\mass{}_*\nu_\alpha)_\alpha$ is compact,
  $\mass{\nu_\alpha}$ is eventually bounded from above by some
  positive number $m$ and $(\mass{}_*\nu_\alpha)_\alpha$ is tight.
  Therefore, there is a $K>0$ such that eventually
  $\dP(\nu_\alpha', \nu_\alpha) < \epsilon$, where $\nu_\alpha'$ is
  the restriction of $\nu_\alpha$ to $\calM_{\leq K}(X_\alpha)$. By
  the triangle inequality we eventually have
  $\dP(\nu_\alpha, F_{\alpha**}\nu_\alpha') \leq 2\epsilon$. Moreover,
  $(X_\alpha, r_\alpha, F_{\alpha**}\nu'_\alpha)$ is eventually in $\A_N$ for
  $N \asn \max(m, K, N_\epsilon)$. This proves the claim since
  $\epsilon$ was arbitrary.
\end{proof}

\section{Equivalence of both topologies}
\label{sec:EquivalenceOfBothTopologies}

We are finally able to prove the fact that the two-level Gromov-weak
topology and the two-level Gromov-Prokhorov topology on $\Mtwo$
coincide. Our proof relies mostly on the compactness criteria of
Theorem~\ref{th:CompactNETSEquivalencies}. 

\begin{theorem}\label{th:dtGPandtGwConvergenceCoincide}
  The two-level Gromov-weak topology and the two-level Gromov-Prokhorov
  topology coincide.
\end{theorem}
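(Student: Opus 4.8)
The plan is to leverage the one-sided comparison already established and upgrade it to equality via a compactness argument phrased in terms of nets. By Lemma~\ref{lm:d2GPconvergenceImpliesWeakConvergence} the topology $\tautGP$ is finer than $\tautGw$, so every $\tautGP$-convergent net is automatically $\tautGw$-convergent to the same limit. Since in any topological space a set is closed precisely when it is closed under limits of convergent nets, the two topologies will have the same closed sets --- and hence coincide --- as soon as I prove the converse implication: \emph{every net that converges in $\tautGw$ also converges in $\tautGP$, and to the same m2m space}. This is exactly the statement flagged in the remarks preceding the theorem, and note that it must be carried out with nets rather than sequences, since at this stage we do not yet know that $\tautGw$ is first countable.

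So let $(\calX_\alpha)_\alpha$ be a net with $\calX_\alpha \tGwto \calX$, writing $\calX_\alpha = (X_\alpha, r_\alpha, \nu_\alpha)$ and $\calX = (X, r, \nu)$. First I would show that $(\calX_\alpha)_\alpha$ is a compact net for $\tautGP$ by verifying condition~\ref{it:th:CompactNETSEquivalencies3} of Theorem~\ref{th:CompactNETSEquivalencies}. The net $(\mass{}_*\nu_\alpha)_\alpha$ converges weakly by Remark~\ref{rm:MassIs2lvlGromovWeakContinuous}, and a convergent net is a compact net. For each fixed $K>0$, continuity of $(X,r,\nu)\mapsto\DD{\mm{f_K\cdot\nu}}$ from Lemma~\ref{lm:fKisContinuous}\ref{it:lm:fKisContinuous1b} makes $(\DD{\mm{f_K\cdot\nu_\alpha}})_\alpha$ convergent, hence compact. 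Finally, Corollary~\ref{cor:tGwConvergenceAndMMD} yields $\limsup_\alpha \MMD{\mm{f_K\cdot\nu_\alpha}} \leq \MMD{\mm{f_K\cdot\nu}}$ together with $\MMD{\mm{f_K\cdot\nu}}\to 0$ as $\delta\searrow 0$; so given $\epsilon>0$, choosing $\delta$ small enough forces $\MMD{\mm{f_K\cdot\nu_\alpha}}<\epsilon$ eventually. All three requirements of~\ref{it:th:CompactNETSEquivalencies3} are met, so $(\calX_\alpha)_\alpha$ is a $\tautGP$-compact net.

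It then remains to identify the limit. Suppose, for contradiction, that $(\calX_\alpha)_\alpha$ does not $\tautGP$-converge to $\calX$. Then there is an $\epsilon>0$ and a subnet staying outside the $\tautGP$-ball $\ball(\calX,\epsilon)$; by $\tautGP$-compactness this subnet has a further subnet converging in $\tautGP$ to some $\calX'$ with $\dtGP(\calX',\calX)\geq\epsilon$, so $\calX'\neq\calX$. But $\tautGP$-convergence implies $\tautGw$-convergence (Lemma~\ref{lm:d2GPconvergenceImpliesWeakConvergence}), so this sub-subnet also $\tautGw$-converges to $\calX'$; on the other hand, being a subnet of the original net, it $\tautGw$-converges to $\calX$. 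Since $\tfset$ separates points (Theorem~\ref{th:ReconstructionTheorem}), the topology $\tautGw$ is Hausdorff, forcing $\calX'=\calX$ --- a contradiction. Hence $\calX_\alpha\to\calX$ in $\tautGP$.

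Combining both directions, a net converges in $\tautGw$ if and only if it converges in $\tautGP$, to the same limit; consequently the two topologies have the same closed sets and therefore coincide. I expect the main obstacle to be the compactness step: the whole argument rests on being able to promote a mere $\tautGw$-convergence hypothesis to genuine $\tautGP$-compactness through Theorem~\ref{th:CompactNETSEquivalencies}, and in particular on the upper semi-continuity of the modulus of mass distribution packaged in Corollary~\ref{cor:tGwConvergenceAndMMD}. That semi-continuity is precisely what converts the weak control coming from the test functions into the uniform fineness estimate needed for Prokhorov-type compactness; everything else is the standard ``compactness plus Hausdorff'' device for pinning down the limit.
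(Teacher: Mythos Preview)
Your proposal is correct and follows essentially the same route as the paper: use Lemma~\ref{lm:d2GPconvergenceImpliesWeakConvergence} for one inclusion, then for a $\tautGw$-convergent net verify condition~\ref{it:th:CompactNETSEquivalencies3} of Theorem~\ref{th:CompactNETSEquivalencies} via Remark~\ref{rm:MassIs2lvlGromovWeakContinuous}, Lemma~\ref{lm:fKisContinuous}\ref{it:lm:fKisContinuous1b}, and Corollary~\ref{cor:tGwConvergenceAndMMD}, and finally identify the limit using that $\tfset$ separates points. Your explicit contradiction argument for the limit identification is just a spelled-out version of the paper's one-line ``every subnet has a further subnet converging to $\calX$, hence the net itself converges to $\calX$''.
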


\begin{proof}
  We will show that the identity function
  $\id \from (\Mtwo, \tautGw) \to (\Mtwo, \tautGP)$ and its inverse
  $\id^{-1}$ are both continuous. Because $\tautGP$ stems from a
  metric, $\id^{-1}$ is continuous if and only if it is sequentially
  continuous and the latter was proved
  in Lemma~\ref{lm:d2GPconvergenceImpliesWeakConvergence}.
  However, we do not know yet if $\tautGw$ is metrizable (nor if it is
  first countable). Therefore, it is not enough to show that $\id$ is
  sequentially continuous. Instead we have to work with nets and show
  that each $\tautGw$-convergent net
  $\calX_\alpha = (X_\alpha, r_\alpha, \nu_\alpha) \tGwto \calX = (X,
  r, \nu)$ also converges in the two-level Gromov-Prokhorov topology.
  We will do so by proving that $(\calX_\alpha)_\alpha$ is a compact
  net with respect to $\tautGP$. Because $\tfset$ separates points
  (\cf Theorem~\ref{th:ReconstructionTheorem}), this implies that
  every subnet has a converging subnet with the same limit $\calX$.
  Hence, $(\calX_\alpha)_\alpha$ itself converges to $\calX$ in the
  $\dtGP$-metric.

  To prove compactness, we use
  property~\ref{it:th:CompactNETSEquivalencies3} of
  Theorem~\ref{th:CompactNETSEquivalencies}. First of all, the
  functions $(X, r, \nu) \mapsto \mass{}_*\nu$ and
  $(X, r, \nu) \mapsto \DD{\mm{f_K\cdot\nu}}$ are continuous with
  respect to $\tautGw$ by
  Remark~\ref{rm:MassIs2lvlGromovWeakContinuous} and
  Lemma~\ref{lm:fKisContinuous}. Hence, both
  $(\mass{}_*\nu_\alpha)_\alpha$ and
  $(\DD{\mm{f_K\cdot\nu_\alpha}})_\alpha$ are compact. Furthermore, by
  Corollary~\ref{cor:tGwConvergenceAndMMD} for every $\epsilon>0$
  there is a $\delta>0$ such that eventually
  $\MMD{\mm{f_K \cdot \nu_\alpha}} < \epsilon$. Thus, the assumptions
  of Theorem~\ref{th:CompactNETSEquivalencies} are satisfied and
  $(\calX_\alpha)_\alpha$ is a compact net in the two-level
  Gromov-Prokhorov topology.
\end{proof}

Note that now every statement we made about one of the two topologies
(\eg embedding theorems, compactness criteria, etc.) also holds true
for the other topology.

\section{Tightness and convergence determining sets}
\label{sec:tightness}

In this section we show that the set of test functions $\tfset$ is
convergence determining for $\calM_1(\Mtwo)$ and characterize tight
subsets of $\calM_1(\Mtwo)$. Let us first recall the definition of
convergence determining sets of functions.

\begin{definition}[Convergence determining sets]
  Let $(X, r)$ be a metric
  space. 
  A set $\calF \subset \calC_b(X)$ is called \emph{convergence
    determining} for $\calM_1(X)$ if for
  $\mu, \mu_1, \mu_2, \dotsc \in \calM_1(X)$ weak convergence
  $\mu_n \wto \mu$ is equivalent to
  \begin{align*}
    \int f \dif\mu_n \to \int f \dif\mu \quad \forall f \in \calF.
  \end{align*}
\end{definition}

\begin{theorem}
  The set of test functions $\tfset$ is convergence determining for
  $\calM_1(\Mtwo)$.
\end{theorem}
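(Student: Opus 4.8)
The plan is to prove the stated equivalence by treating its two directions separately; only the backward one requires work. For the forward direction, if $P_n \wto P$ in $\calM_1(\Mtwo)$ then $\int \Phi \dif P_n \to \int \Phi \dif P$ for every $\Phi \in \tfset$ simply because each such $\Phi$ is bounded and $\tautGw$-continuous, and $\tautGw = \tautGP$ is the (Polish) topology of $\Mtwo$ (\cf Theorem~\ref{th:dtGPandtGwConvergenceCoincide} and Proposition~\ref{prp:dtGPIsPolishMetric}). The entire content of the theorem therefore lies in the converse: assuming $\int \Phi \dif P_n \to \int \Phi \dif P$ for all $\Phi \in \tfset$, deduce $P_n \wto P$. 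My strategy is the classical one: first show $(P_n)_n$ is tight, so that it is relatively compact in $\calM_1(\Mtwo)$ by Prokhorov's theorem; then show that every subsequential weak limit coincides with $P$, which forces $P_n \wto P$.

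The identification of limits is the easy half of the converse and I would fix it first to pin down the logic. If a subsequence satisfies $P_{n_k} \wto Q$, then, each $\Phi \in \tfset$ being bounded and continuous, $\int \Phi \dif Q = \lim_k \int \Phi \dif P_{n_k} = \int \Phi \dif P$. Now $\tfset$ separates the points of $\Mtwo$ (Theorem~\ref{th:ReconstructionTheorem}) and is closed under multiplication, so $\mathrm{span}(\tfset \cup \set{1})$ is a point-separating subalgebra of $\calC_b(\Mtwo)$ containing the constants. Since $Q$ and $P$ are Borel probability measures on the Polish space $\Mtwo$, they are tight, and a Stone--Weierstrass argument on compact sets shows that such an algebra is measure determining; hence $Q = P$. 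As every subsequential limit then equals $P$, tightness will immediately give $P_n \wto P$.

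The hard part will be tightness, and here I would build the compact sets from the characterization in Theorem~\ref{th:CompactnessEquivalencies}, specifically criterion~\ref{it:th:CompactnessEquivalencies4}, which certifies relative compactness of $\Gamma \subset \Mtwo$ through (i) relative compactness of $\set{\mass{}_*\nu}$ in $\calM_f(\Rplus)$, (ii) uniform smallness of $\MMD{\mm{f_K\cdot\nu}}$ as $\delta\searrow 0$, and (iii) relative compactness of $\set{\DD{\mm{f_K\cdot\nu}}}$. The point is that the test functions control exactly these functionals, but in a definite order. The mass functionals \eqref{TF1} and \eqref{TF2} depend on $\nu$ only through $\mass{}_*\nu$ and form the canonical convergence-determining class for $\calM_1(\calM_f(\Rplus))$; their convergence both forces tightness of the laws of $\mass{}_*\nu$ under $P_n$ and, crucially, bounds $\mass{\nu}$ off a set of small $P_n$-probability uniformly in $n$. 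This mass control is what makes the geometric functionals accessible: for $h \in \calC_b(\Rplus)$ the map $\calX \mapsto \int h \dif \DD{\mm{f_K\cdot\nu}}$ is, after decomposing $\mm{f_K\cdot\nu}$, of the form~\eqref{TF4} (with weight $\psi(a_1,a_2) = a_1 g_K(a_1)\, a_2 g_K(a_2)$ and $\phi = h$), and on the event $\set{\mass{\nu}\leq K'}$ such a \eqref{TF4} function agrees with a genuine combination of $\tfset$-functions, the otherwise unbounded weight $\chi(x)=x^{m}$ being replaceable by a bounded one. Feeding the hypothesis into these expressions yields tightness of the laws of each $\DD{\mm{f_K\cdot\nu}}$ (via Lemma~\ref{lm:fKisContinuous}\ref{it:lm:fKisContinuous1b} and Lemma~\ref{lm:testfunctions_alternative}), and intersecting the resulting level sets over a sequence $K_j \nearrow \infty$ produces, for each $\eta>0$, a set $\Gamma_\eta$ with $\sup_n P_n(\Complement\Gamma_\eta)<\eta$.

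The single most delicate point I anticipate is the uniform control of the modulus-of-mass-distribution term~(ii): unlike the distance distribution, $\calX\mapsto\MMD{\mm{f_K\cdot\nu}}$ is only upper semicontinuous (Corollary~\ref{cor:tGwConvergenceAndMMD}) and is not itself a test-function integral, so bounding $\sup_n P_n(\MMD{\mm{f_K\cdot\nu}}\geq\zeta)$ uniformly in $n$ requires approximating the relevant indicator functionals by \eqref{TF3}-type expressions and exploiting the semicontinuity together with a Portmanteau argument in the spirit of Lemma~\ref{lm:AMMDLowerSemiCts}. This is exactly where a companion tightness criterion for $\calM_1(\Mtwo)$, phrased directly in terms of the three functionals above, would streamline the argument, after which tightness --- and hence the theorem --- follows as described.
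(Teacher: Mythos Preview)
Your proposal takes a substantially different route from the paper. The paper's proof is a single sentence: since $\tfset$ separates points (Theorem~\ref{th:ReconstructionTheorem}), is closed under multiplication, and \emph{by definition} induces the topology $\tautGw$ of $\Mtwo$, it is convergence determining for $\calM_1(\Mtwo)$ by a general lemma of Hoffmann-J{\o}rgensen (\cite[Lemma~4.1]{Hoffmann77}). No tightness argument specific to $\Mtwo$ is carried out; the abstract result absorbs all of that work.

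You have in fact identified two of the three hypotheses---point separation and multiplicativity---in your limit-identification step, and you implicitly use the third (that each $\Phi \in \tfset$ is $\tautGw$-continuous) in the forward direction. What you are missing is that these three facts together are already the full hypotheses of a known theorem that yields the conclusion directly. Your hands-on plan to manufacture tightness from test-function convergence via Theorem~\ref{th:CompactnessEquivalencies} is not wrong in spirit, but it is far more laborious than necessary, and your own sketch concedes that the modulus-of-mass-distribution step is incomplete: you need to bound $\sup_n P_n(\MMD{\mm{f_K\cdot\nu}}\geq\zeta)$ using only convergence of $\tfset$-integrals, yet $\MMD{}$ is merely upper semicontinuous and not a test-function integral, and you cannot appeal to Portmanteau for its closed superlevel sets since weak convergence of $(P_n)_n$ is exactly what has not been established. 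Pushing this through would essentially amount to re-proving the Hoffmann-J{\o}rgensen lemma in the particular case of $\Mtwo$, which is possible but buys nothing over citing it.
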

\begin{proof}
  The set $\tfset$ separates points, is closed under multiplication
  and induces the topology of $\Mtwo$. Therefore, it is convergence
  determining for $\calM_1(\Mtwo)$ by
  \cite[Lemma 4.1]{Hoffmann77}.
\end{proof}

This means that a sequence $(P_n)_n$ in $\calM_1(\Mtwo)$ converges to
$P \in \calM_1(\Mtwo)$ if and only if $P_n[\Phi]$ converges to
$P[\Phi]$ for every $\Phi \in \tfset$. Here, $P[\Phi]$ denotes the
expectation $\int \Phi(\calX) \dif P(\calX)$.

We now give a characterization of tight subsets of $\calM_1(\Mtwo)$.
Since tightness is defined in terms of compact sets, it is not
surprising that we use Theorem~\ref{th:CompactnessEquivalencies} to
find conditions for tightness.
\begin{proposition}[Characterization of tight sets]\label{prp:MtwoTightness}
  A set $\calP \subset \calM_1(\Mtwo)$ is tight if and only if for
  every $\epsilon>0$ and $K>0$ there are $\delta>0$ and $c>0$ such
  that for every $P \in \calP$ we have
  \begin{enumerate}
    \item $P(\mass{\nu} \geq c) < \epsilon$,
    \item $P(\mass{}_*\nu([c, \infty)) \geq \epsilon) < \epsilon$,
    \item $P(\MMD{\mm{f_K \cdot \nu}} \geq \epsilon) < \epsilon$,
    \item $P(\DD{\mm{f_K \cdot \nu}}([c, \infty)) \geq \epsilon ) <
    \epsilon$.
  \end{enumerate}
\end{proposition}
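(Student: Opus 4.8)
The plan is to read this proposition as the probabilistic shadow of the compactness characterization in Theorem~\ref{th:CompactnessEquivalencies}. Since $(\Mtwo, \dtGP)$ is Polish (Proposition~\ref{prp:dtGPIsPolishMetric}), a set $\calP \subset \calM_1(\Mtwo)$ is tight if and only if for every $\eta > 0$ there is a compact set $\Gamma \subset \Mtwo$ with $\sup_{P \in \calP} P(\Complement \Gamma) < \eta$. The four conditions are exactly the four ingredients of criterion~\ref{it:th:CompactnessEquivalencies4} of Theorem~\ref{th:CompactnessEquivalencies} — boundedness and tightness of $\mass{}_*\nu$ (conditions (1) and (2)), smallness of the modulus of mass distribution (condition (3)), and tightness of the distance distribution (condition (4)) — each turned into the statement that a corresponding ``bad event'' has small $P$-probability. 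Before starting I would record that all maps appearing in the events are Borel measurable: $\calX \mapsto \mass{\nu}$ and $\calX \mapsto \mass{}_*\nu$ are continuous (Remark~\ref{rm:MassIs2lvlGromovWeakContinuous}), $\calX \mapsto \DD{\mm{f_K \cdot \nu}}$ is continuous (Lemma~\ref{lm:fKisContinuous}), $\calX \mapsto \MMD{\mm{f_K \cdot \nu}}$ is upper semi-continuous (Corollary~\ref{cor:tGwConvergenceAndMMD}), and $\mu \mapsto \mu([c, \infty))$ is upper semi-continuous on $\calM_f(\Rplus)$; hence the four bad events $\set{\mass{\nu} \geq c}$, $\set{\mass{}_*\nu([c,\infty)) \geq \epsilon}$, $\set{\MMD{\mm{f_K \cdot \nu}} \geq \epsilon}$ and $\set{\DD{\mm{f_K \cdot \nu}}([c,\infty)) \geq \epsilon}$ are closed, in particular $P$-measurable.

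For the ``only if'' direction I would fix $\epsilon, K > 0$, invoke tightness with $\eta = \epsilon$ to obtain a compact $\Gamma$ with $\sup_P P(\Complement \Gamma) < \epsilon$, and then extract the four numerical bounds from the fact that $\Gamma$ satisfies criterion~\ref{it:th:CompactnessEquivalencies4}. Concretely, $\set{\mass{}_*\nu | \calX \in \Gamma}$ is relatively compact in $\calM_f(\Rplus)$, so by Prokhorov's theorem (Proposition~\ref{prp:ProkhorovsTheorem}) its masses $\mass{\nu}$ are bounded by some $c_1$ and it is tight, giving a $c_2$ with $\mass{}_*\nu([c_2, \infty)) < \epsilon$ on $\Gamma$; likewise $\sup_{\calX \in \Gamma}\MMD{\mm{f_K \cdot \nu}} \to 0$ as $\delta \searrow 0$ yields a $\delta$ with $\MMD{\mm{f_K \cdot \nu}} < \epsilon$ on $\Gamma$, and relative compactness of $\set{\DD{\mm{f_K \cdot \nu}} | \calX \in \Gamma}$ yields a $c_3$ with $\DD{\mm{f_K \cdot \nu}}([c_3, \infty)) < \epsilon$ on $\Gamma$. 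Setting $c \asn \max(c_1 + 1, c_2, c_3)$, each of the four bad events is disjoint from $\Gamma$, hence contained in $\Complement \Gamma$ and of $P$-probability $< \epsilon$, which is exactly conditions (1)--(4).

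For the ``if'' direction I would fix $\eta > 0$ and build a single relatively compact $\Gamma$ whose complement is uniformly small. By the note after Theorem~\ref{th:CompactnessEquivalencies} it suffices to control a fixed sequence $K_n \nearrow \infty$. I would pick a summable family of thresholds $\epsilon_0, \epsilon_{2,j}, \epsilon_{3,n,j}, \epsilon_{4,n,j}$ with total sum $< \eta$ and tending to $0$ in $j$, apply the hypothesis to each to obtain the corresponding $c$'s and $\delta$'s, and then define $\Gamma$ as the set of $\calX = (X,r,\nu)$ avoiding every bad event, that is with $\mass{\nu} < c_0$, with $\mass{}_*\nu([c_j, \infty)) < \epsilon_{2,j}$ for all $j$, with $\MMD[\delta_{n,j}]{\mm{f_{K_n}\cdot\nu}} < \epsilon_{3,n,j}$ for all $n, j$, and with $\DD{\mm{f_{K_n}\cdot\nu}}([c_{n,j},\infty)) < \epsilon_{4,n,j}$ for all $n,j$. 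Relative compactness of $\Gamma$ follows from criterion~\ref{it:th:CompactnessEquivalencies4}: the mass of $\mass{}_*\nu$ equals $\mass{\nu} < c_0$ and the tightness bounds make $\set{\mass{}_*\nu | \calX \in \Gamma}$ relatively compact by Prokhorov's theorem; for each $K_n$, since $\delta \mapsto \MMD{\cdot}$ is non-decreasing (Lemma~\ref{lm:MMDProperties}) the bounds $\MMD[\delta_{n,j}]{} < \epsilon_{3,n,j} \to 0$ force $\sup_\Gamma \MMD{\mm{f_{K_n}\cdot\nu}} \to 0$ as $\delta \searrow 0$, while $\mass{\mm{f_{K_n}\cdot\nu}} \leq K_n \mass{\nu} \leq K_n c_0$ bounds the mass of $\DD{\mm{f_{K_n}\cdot\nu}}$ and the tightness bounds make it relatively compact. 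Finally $\Complement \Gamma$ is the countable union of the bad events, so $P(\Complement \Gamma)$ is at most the sum of the thresholds, which is $< \eta$, uniformly in $P$; replacing $\Gamma$ by its (compact) closure preserves this bound.

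The routine heart of the argument is the ``if'' direction, where the only real work is the bookkeeping: distributing $\eta$ over a doubly-indexed summable family so that the imposed bounds reproduce \emph{exactly} the compactness criterion of Theorem~\ref{th:CompactnessEquivalencies}. The one place where the structure of the objects (rather than pure measure theory) is used is the passage from the discrete bounds $\MMD[\delta_{n,j}]{} < \epsilon_{3,n,j}$ to the continuous requirement $\lim_{\delta \searrow 0}\sup_\Gamma \MMD{\cdot} = 0$, which relies on monotonicity of the modulus of mass distribution in $\delta$. Everything else is a direct translation through Theorem~\ref{th:CompactnessEquivalencies} and Prokhorov's theorem.
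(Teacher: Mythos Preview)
Your proof is correct and follows the same strategy as the paper: both directions are obtained by translating criterion~\ref{it:th:CompactnessEquivalencies4} of Theorem~\ref{th:CompactnessEquivalencies} into probabilistic language via Prokhorov's theorem. The paper carries out the ``if'' direction with a single sequence $\epsilon_n = \tfrac{\epsilon}{4}\,2^{-n}$ and $K_n = n$, whereas you use a doubly-indexed family $\epsilon_{3,n,j}$, $\epsilon_{4,n,j}$; your version is in fact a little more careful, since it makes the passage from the discrete bounds $\MMD[\delta_{n,j}]{\mm{f_{K_n}\cdot\nu}} < \epsilon_{3,n,j}$ to the continuous requirement $\sup_\Gamma \MMD{\mm{f_{K_n}\cdot\nu}} \to 0$ as $\delta \searrow 0$ fully explicit (the paper only records one $\delta_n$ per $K_n$ and leaves this step implicit).
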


\begin{proof}
  Let $\epsilon, K > 0$. If $\calP$ is tight, there is a compact set
  $C \subset \Mtwo$ with $P(\Complement C) < \epsilon$ for every
  $P \in \calP$. By property~\ref{it:th:CompactnessEquivalencies4} of
  Theorem~\ref{th:CompactnessEquivalencies} we can choose $\delta>0$
  and $c> \sup\set{\mass{\nu} | (X, r, \nu) \in C}$ such that for
  every $(X, r, \nu) \in C$ we have
  \begin{align*}
   &\mass{}_*\nu([c, \infty)) < \epsilon, \\
    &\MMD{\mm{f_K \cdot \nu}} < \epsilon,  \\
    &\DD{\mm{f_K \cdot \nu}}([c, \infty)) < \epsilon
  \end{align*}
  and the claim follows immediately.

  To prove the other direction, let $\epsilon>0$. We are going to
  construct a relatively compact set $C \subset \Mtwo$ such that
  $P(\Complement C) < \epsilon$ for every $P \in \calP$. First, define
  $\epsilon_n \asn \frac{\epsilon}{4} \cdot 2^{-n}$ and $K_n \asn n$
  for every $n \in \N$. By assumption there are $c, \delta_n, c_n > 0$
  such that
  \begin{align*}
    &P(\mass{\nu} \geq c) < \tfrac{\epsilon}{4}, \\
    &P(\mass{}_*\nu([c_n, \infty)) \geq \epsilon_n) < \epsilon_n, \\
    &P( \MMD[\delta_n]{\mm{f_{K_n} \cdot \nu}} \geq \epsilon_n ) < \epsilon_n,\\
    &P( \DD{\mm{f_{K_n} \cdot \nu}}([c_n, \infty)) \geq \epsilon_n) < \epsilon_n
  \end{align*}
  for every $P \in \calP$ and $n \in \N$. Let $C_n$ be the set of
  all $(X,r,\nu) \in \Mtwo$ with
  \begin{align*}
    &\mass{}_*\nu([c_n, \infty)) < \epsilon_n,\\
    &\MMD[\delta_n]{\mm{f_{K_n} \cdot \nu}} < \epsilon_n,\\
    &\DD{\mm{f_{K_n} \cdot \nu}}([c_n, \infty)) < \epsilon_n.
  \end{align*}
  We have $P(\Complement C_n) < 3\epsilon_n$ for every
  $P \in \calP$. With
  \begin{align*}
    C \asn \set{ (X,r,\nu) \in
    \Mtwo | \mass{\nu} < c} \cap \bigcap_{n \in \N}C_n,
  \end{align*}
  we get
  \begin{align*}
    P(\Complement C) < \frac{\epsilon}{4} + \sum_{n \in
    \N}3\epsilon_n = \epsilon
  \end{align*} 
  for every $P \in \calP$. Moreover, $C$ satisfies the compactness
  criterion given in property~\ref{it:th:CompactnessEquivalencies4} of
  Theorem~\ref{th:CompactnessEquivalencies} and thus is relatively
  compact.
\end{proof}

If an m2m space $(X, r, \nu)$ satisfies $\nu \in \calM_1(\calM_1(X))$,
its first moment measure $\mm{\nu}$ is always finite and we do not
need to approximate it by $\mm{f_K \cdot \nu}$. Let us define the
subset of all m2m spaces with this property. 
\begin{definition}
  We define the set of \emph{metric two-level probability measure spaces}
  $\MtwoProb$ by
  \begin{align*}
    \MtwoProb \asn \set{(X,r,\nu) \in \Mtwo | \nu \in \calM_1(\calM_1(X))}.
  \end{align*}
  $\MtwoProb$ is a closed subset of $\Mtwo$ and thus a Polish metric
  space with the restriction of the metric $\dtGP$.
\end{definition}

Proposition~\ref{prp:MtwoTightness} boils down to a much simpler
version if $P(\MtwoProb) = 1$ for all $P \in \calP$. We omit
the obvious proof.
\begin{corollary}\label{cor:MtwoProbTightness1}
  A set $\calP \subset \calM_1(\MtwoProb)$ is tight if and only if for
  every $\epsilon>0$ there are $\delta>0$ and $c>0$ such that for
  every $P \in \calP$ we have
  \begin{enumerate}
    \item\label{it:cor:MtwoProbTightness1_2}
    $P(\DD{\mm{\nu}}([c, \infty)) \geq \epsilon ) < \epsilon$ and
    \item\label{it:cor:MtwoProbTightness1_1} $P(\MMD{\mm{\nu}} \geq \epsilon) < \epsilon$.
  \end{enumerate}
\end{corollary}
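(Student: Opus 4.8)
The plan is to read the statement off Proposition~\ref{prp:MtwoTightness} after specializing its four conditions to the subspace $\MtwoProb$. First I would record that for $(X, r, \nu) \in \MtwoProb$ one has $\nu \in \calM_1(\calM_1(X))$, so $\mass{\nu} = 1$ and $\nu$ is carried by $\calM_1(X)$; hence $\mass{}_*\nu = \delta_1$ and $\mm{\nu}$ is itself a probability measure on $X$. Consequently conditions (1) and (2) of Proposition~\ref{prp:MtwoTightness} hold automatically for any threshold $c > 1$, irrespective of $\calP$, and only conditions (3) and (4) carry information. Since $\MtwoProb$ is closed in $\Mtwo$, tightness of $\calP$ in $\calM_1(\MtwoProb)$ is the same as tightness in $\calM_1(\Mtwo)$ — compact subsets of the closed subspace are compact in the ambient space, and conversely one intersects an ambient compact set with $\MtwoProb$ (using $P(\MtwoProb)=1$) — so Proposition~\ref{prp:MtwoTightness} applies and it remains only to match its conditions (3) and (4) with conditions (2) and (1) of the corollary.

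The bridge is the observation that on $\MtwoProb$ the cut-off acts as a scalar: because $\mass{\mu} = 1$ for $\nu$-almost every $\mu$, we have $f_K \cdot \nu = g_K(1)\,\nu$ and hence $\mm{f_K \cdot \nu} = c_K\,\mm{\nu}$ with $c_K \asn g_K(1) \in [0, 1]$, a constant multiple of $\mm{\nu}$. For the explicit density of Section~\ref{sec:Approximation} one has $g_K(1) = 1$ as soon as $K \geq 2$, so $\mm{f_K \cdot \nu} = \mm{\nu}$ and conditions (3)--(4) of the proposition, taken with such a $K$, become verbatim conditions (2)--(1) of the corollary, settling both implications at once. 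To keep the argument valid for an arbitrary admissible family, for which only $c_K \to 1$ is guaranteed, I would track the scalar $c_K$ through the two complexity statistics.

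For the distance distribution the scaling is exact: $\DD{c\,\mm{\nu}} = r_*(c\,\mm{\nu})^{\otimes 2} = c^{2}\,\DD{\mm{\nu}}$, so as measures $\DD{\mm{f_K \cdot \nu}} = c_K^{2}\,\DD{\mm{\nu}} \leq \DD{\mm{\nu}}$, and tail probabilities transfer directly between corollary condition (1) and proposition condition (4). For the modulus of mass distribution I would use the elementary identity $\set{x \in X | c\,\mm{\nu}(B(x, \epsilon)) \leq \delta} = \set{x \in X | \mm{\nu}(B(x, \epsilon)) \leq \delta/c}$ together with $c \leq 1$ to obtain $\MMD{c\,\mm{\nu}} \leq \MMD[\delta/c]{\mm{\nu}}$, and the reverse bound $\MMD[\delta/c]{\mm{\nu}} \leq c^{-1}\,\MMD{c\,\mm{\nu}}$ by enlarging the ball radius from $\epsilon$ to $\epsilon/c$. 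In the backward direction I take the proposition's threshold to be $\delta \asn c_K\delta_0$, where $\delta_0$ is furnished by corollary condition (2), so that $\delta/c_K = \delta_0$ and $\MMD{\mm{f_K \cdot \nu}} \leq \MMD[\delta_0]{\mm{\nu}}$ by monotonicity of the modulus in its index (Lemma~\ref{lm:MMDProperties}); the resulting event-inclusion gives proposition condition (3) for every $K > 0$. In the forward direction the same relations, with $K$ chosen large enough that $c_K$ is close to $1$ (possible since $g_K(1) \to 1$), convert proposition condition (3) back into corollary condition (2) at the cost of a harmless constant absorbed into the $\epsilon$'s. The degenerate case $c_K = 0$, which occurs for small $K$, is trivial since then $\mm{f_K \cdot \nu} = \nullmeasure$, with $\DD{\nullmeasure} = \nullmeasure$ and $\MMD{\nullmeasure} = 0$.

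The one genuinely delicate point is the modulus-of-mass bookkeeping: I must check that a single threshold $\delta_0$ from the corollary controls $\MMD{\mm{f_K \cdot \nu}}$ simultaneously for all $K$, which is exactly where the rescaling $\delta \mapsto \delta/c_K$ and the monotonicity of the modulus in $\delta$ enter. Everything else is a direct, if mildly constant-laden, transcription of Proposition~\ref{prp:MtwoTightness}, and under the explicit density it collapses to the trivial identification $\mm{f_K \cdot \nu} = \mm{\nu}$ for $K \geq 2$.
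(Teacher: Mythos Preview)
Your proposal is correct and follows exactly the approach the paper has in mind: the paper simply remarks that Proposition~\ref{prp:MtwoTightness} ``boils down to a much simpler version if $P(\MtwoProb) = 1$'' and omits the proof as obvious. Your observation that $f_K\cdot\nu = g_K(1)\,\nu$ on $\MtwoProb$, so that for the explicit density $\mm{f_K\cdot\nu} = \mm{\nu}$ once $K\geq 2$, is precisely the intended specialization; the additional scaling analysis you carry out for a general admissible family $(g_K)_K$ is more than the paper requires but is correct and harmless.
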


The following version of the preceding corollary is particularly
useful for the application in Section~\ref{sec:NestedKingman}.
\begin{corollary}\label{cor:MtwoProbTightness2}
  A set $\calP \subset \calM_1(\MtwoProb)$ is tight if the following
  two conditions hold:
  \begin{enumerate}
    \item\label{it:cor:MtwoProbTightness2_2}
    There is a finite Borel measure $\mu$ on $\Rplus$, such that
    \begin{displaymath}
      P[\DD{\mm{\nu}}] = \int \DD{\mm{\nu}} \dif P((X, r, \nu)) \leq \mu
    \end{displaymath}
    for every $P \in \calP$.
    \item\label{it:cor:MtwoProbTightness2_1}
    $ \lim\limits_{\delta \searrow 0} \sup\limits_{P \in \calP}
    P[\mm{\nu}(\set{x \in X | \mm{\nu}(\clBall(x, \epsilon)) < \delta})]
    = 0$ for every $\epsilon>0$.
  \end{enumerate}
\end{corollary}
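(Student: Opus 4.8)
The plan is to verify the two conditions of Corollary~\ref{cor:MtwoProbTightness1}, which for a set $\calP \subset \calM_1(\MtwoProb)$ is equivalent to tightness. Since every $\nu \in \calM_1(\calM_1(X))$ forces $\mm{\nu}$ to be a probability measure, no mass conditions intervene and it suffices to produce, for each $\epsilon>0$, a single $\delta>0$ and a single $c>0$ (chosen independently, as $c$ appears only in the first condition and $\delta$ only in the second) that work uniformly over $P \in \calP$. Both conditions will follow from a Markov-type estimate combined with the corresponding hypothesis.

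First I would treat the distance-distribution condition~\ref{it:cor:MtwoProbTightness1_2}. Regarding $P[\DD{\mm{\nu}}]$ as a finite Borel measure on $\Rplus$, hypothesis~\ref{it:cor:MtwoProbTightness2_2} gives $\int \DD{\mm{\nu}}([c, \infty)) \dif P \leq \mu([c, \infty))$ for every $P \in \calP$. As $\mu$ is finite, $\mu([c, \infty)) \to 0$ for $c \to \infty$, so given $\epsilon>0$ I would choose $c$ with $\mu([c, \infty)) < \epsilon^2$. Markov's inequality then yields $P(\DD{\mm{\nu}}([c, \infty)) \geq \epsilon) \leq \frac{1}{\epsilon}\int \DD{\mm{\nu}}([c, \infty)) \dif P < \epsilon$ uniformly in $P$, which is exactly condition~\ref{it:cor:MtwoProbTightness1_2}.

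The modulus-of-mass condition~\ref{it:cor:MtwoProbTightness1_1} is the delicate part and the main obstacle, because hypothesis~\ref{it:cor:MtwoProbTightness2_1} is phrased with \emph{closed} balls $\clBall(x, \epsilon)$ and a \emph{strict} inequality, whereas $\MMD{\mm{\nu}}$ is defined via open balls and a non-strict inequality. By item~\ref{it:lm:MMDProperties_2} of Lemma~\ref{lm:MMDProperties}, the event $\set{\MMD{\mm{\nu}} \geq \epsilon}$ coincides with $\set{\mm{\nu}(\set{x \in X | \mm{\nu}(B(x, \epsilon)) \leq \delta}) \geq \epsilon}$. I would reconcile the two formulations by a ball comparison: for $0 < \epsilon' < \epsilon$ and $\delta' > \delta$ one has $\clBall(x, \epsilon') \subset B(x, \epsilon)$, so $\mm{\nu}(B(x, \epsilon)) \leq \delta$ forces $\mm{\nu}(\clBall(x, \epsilon')) \leq \delta < \delta'$. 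Hence $\set{x \in X | \mm{\nu}(B(x, \epsilon)) \leq \delta} \subset \set{x \in X | \mm{\nu}(\clBall(x, \epsilon')) < \delta'}$, and monotonicity of $\mm{\nu}$ transfers this inclusion to the masses of these sets.

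With $\epsilon' \asn \epsilon/2$ fixed, hypothesis~\ref{it:cor:MtwoProbTightness2_1} (applied with its free parameter equal to $\epsilon/2$) supplies a $\delta_0>0$ such that $\sup_{P \in \calP} P[\mm{\nu}(\set{x \in X | \mm{\nu}(\clBall(x, \epsilon/2)) < \delta'})] < \epsilon^2$ whenever $\delta' \leq \delta_0$. Choosing $\delta \asn \delta_0/2$ and $\delta' \asn 3\delta_0/4$ (so that $\delta < \delta' < \delta_0$) and applying Markov's inequality once more gives
\[
P(\MMD{\mm{\nu}} \geq \epsilon) \leq \tfrac{1}{\epsilon} P\big[\mm{\nu}(\set{x \in X | \mm{\nu}(B(x, \epsilon)) \leq \delta})\big] \leq \tfrac{1}{\epsilon} P\big[\mm{\nu}(\set{x \in X | \mm{\nu}(\clBall(x, \epsilon/2)) < \delta'})\big] < \epsilon
\]
uniformly over $P \in \calP$, which is condition~\ref{it:cor:MtwoProbTightness1_1}. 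Both conditions of Corollary~\ref{cor:MtwoProbTightness1} then hold with the $\delta$ and $c$ just produced, so $\calP$ is tight. I expect no difficulty beyond the careful bookkeeping of the open/closed ball and strict/non-strict discrepancy described above.
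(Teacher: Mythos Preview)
Your proposal is correct and follows essentially the same route as the paper: verify the two conditions of Corollary~\ref{cor:MtwoProbTightness1} via Markov's inequality, handling the open/closed-ball and strict/non-strict discrepancy by the inclusion $\clBall(x,\epsilon/2)\subset B(x,\epsilon)$. The paper's only cosmetic difference is that it picks $\delta$ directly so that $P[\mm{\nu}(\{x : \mm{\nu}(\clBall(x,\epsilon/2))<2\delta\})]<\epsilon^2$, avoiding your auxiliary $\delta_0,\delta'$, but the argument is otherwise identical.
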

\begin{remark}
  In case $\calP$ is a sequence $(P_n)_n$, we can replace
  $\sup\limits_{P \in \calP}$ in
  property~\ref{it:cor:MtwoProbTightness2_1} by
  $\limsup\limits_{n \to \infty}$.
\end{remark}
\begin{proof}[of Corollary~\ref{cor:MtwoProbTightness2}]
  We show that $\calP$ satisfies both properties of
  Corollary~\ref{cor:MtwoProbTightness1}. Let $\epsilon>0$. There
  exists a $c>0$ such that $\mu([c, \infty)) < \epsilon^2$. By
  Markov's inequality we get
  \begin{align*}
    P(\DD{\mm{\nu}}([c, \infty)) \geq \epsilon) \leq
    \frac{P[\DD{\mm{\nu}}([c, \infty))]}{\epsilon} \leq \frac{\mu([c,
    \infty))}{\epsilon} < \epsilon
  \end{align*}
  for every $P \in \calP$. Moreover, by
  condition~\ref{it:cor:MtwoProbTightness2_1} there is a $\delta>0$
  such that
  \begin{align*}
    P[\mm{\nu}(\set{x \in X | \mm{\nu}(\clBall(x, \frac{\epsilon}{2}))
    < 2\delta})] < \epsilon^2
  \end{align*}
  for all $P \in \calP$. With Lemma~\ref{lm:MMDProperties} and with
  Markov's inequality we get
  \begin{align*}
    P(\MMD{\mm{\nu}} \geq \epsilon) &= P(\mm{\nu}(\set{x \in X |
    \mm{\nu}(B(x, \epsilon)) \leq \delta}) \geq \epsilon) \\ &\leq
    \epsilon^{-1} P[\mm{\nu}(\set{x \in X | \mm{\nu}(B(x, \epsilon))
    \leq \delta})] \\ &\leq
    \epsilon^{-1} P[\mm{\nu}(\set{x \in X | \mm{\nu}(\clBall(x,
                        \frac{\epsilon}{2})) < 2\delta})] \\
    &< \epsilon
  \end{align*}
  for all $P \in \calP$. Thus, $\calP$ is tight by
  Corollary~\ref{cor:MtwoProbTightness1}.
\end{proof}

\section{Example: The nested Kingman coalescent measure tree}
\label{sec:NestedKingman}

In this section we define the finite and infinite nested Kingman
coalescent. Moreover, we introduce the nested Kingman coalescent
measure tree, which is a random m2m space defined as the weak limit of
finite random m2m spaces. To show convergence we will apply the
tightness criteria from Section~\ref{sec:tightness}.

\subsection{The nested Kingman coalescent}

Nested coalescents were introduced in \cite{AiramThesis, Blancas18a}
to jointly model the species and the gene coalescents of a population
of multiple species. The nested Kingman coalescent is a
special case of the model developed in these publications (\cf also
\cite{Blancas18b} for further research about the nested Kingman
coalescent). In this subsection we give a definition of the nested
Kingman coalescent first for a finite set $I \subset \N^2$ of
individuals and then for infinitely many individuals (\ie $I = \N^2$).
We use $\N^2$ to encode individuals because we think of
$(i, j) \in \N^2$ as the $j$-th individual of the $i$-th species.

For a non-empty set $I$ let $\calE(I) \subset I^2$ denote the set of
equivalence relations on $I$ equipped with the discrete topology. The
equivalence classes of an equivalence relation are called
\emph{blocks}. We say that a pair
$(\calR_1, \calR_2) \in \calE(I) \times \calE(I)$ is \emph{nested} (or
that $\calR_2$ is nested in $\calR_1$) if $\calR_1 \supset \calR_2$.
Note that $(\calR_1, \calR_2)$ is nested if and only if for every
block $\pi_2$ of $\calR_2$ there is a block $\pi_1$ of $\calR_1$ with
$\pi_2 \subset \pi_1$. Let
\begin{displaymath}
\calN(I) \asn \set{(\calR_1, \calR_2) \in \calE(I)^2 | \calR_2
  \subset \calR_1}
\end{displaymath}
denote the set of nested equivalence relations
equipped with the discrete topology.

Moreover, we define the following equivalence relations on $\N^2$,
which will be the initial states of the nested Kingman coalescent:
\begin{align*}
  G_0 &\asn \set{ (\vect{x}, \vect{x}) | \vect{x} \in \N^2}, \\
  S_0 &\asn \set{ ((i,j), (i,k)) | i, j, k \in \N}.
\end{align*}
$G_0$ is the equivalence relation with only singleton blocks and $S_0$
is the equivalence relation whose blocks are the different species of
the population.

\begin{definition}[Finite nested Kingman coalescent]
  Let $I$ be a finite subset of $\N^2$ and $\gamma_s, \gamma_g > 0$.
  Let
  $\calR^{(I)} = (\calR^{(I)}(t))_{t \geq 0} = (\calR_s^{(I)}(t),
  \calR_g^{(I)}(t))_{t\geq 0} $ be a continuous-time Markov process with
  values in $\calN(I)$. We call $\calR^{(I)}$ the \emph{finite nested
    Kingman coalescent on $I$ with rates $(\gamma_s, \gamma_g)$} if it
  has the following properties:
  \begin{itemize}
    \item The initial state is $\calR_s^{(I)}(0) = S_0 \cap I^2$ and
    $\calR_g^{(I)}(0) = G_0 \cap I^2$, \ie for
    $\vect{x} = (x_1, x_2) \in \I^2$ and
    $\vect{y} = (y_1, y_2) \in I^2$ we have
    \begin{align*}
      (\vect{x}, \vect{y}) \in \calR_s^{(I)}(0) &\Leftrightarrow
      x_1 = y_1, \\
      (\vect{x}, \vect{y}) \in \calR_g^{(I)}(0) &\Leftrightarrow
      \vect{x} = \vect{y}.
    \end{align*}
    \item The \emph{species coalescent}
    $\calR_s^{(I)} = (\calR_s^{(I)}(t))_{t \geq 0}$ behaves like a
    Kingman coalescent with rate $\gamma_s$, \ie any two blocks in
    $\calR_s^{(I)}(t)$ merge at rate $\gamma_s$
    \item The \emph{gene coalescent}
    $\calR_g^{(I)} = (\calR_s^{(I)}(t))_{t \geq 0}$ behaves in the
    following way: any two blocks $\pi_1, \pi_2$ of $\calR_g^{(I)}$
    such that $\pi_1 \cup \pi_2$ is contained in a single block of
    $\calR_s^{(I)}(t)$ merge at rate $\gamma_g$. Other blocks cannot
    merge.
  \end{itemize}
\end{definition}

\begin{figure}
  \centering
  \includegraphics{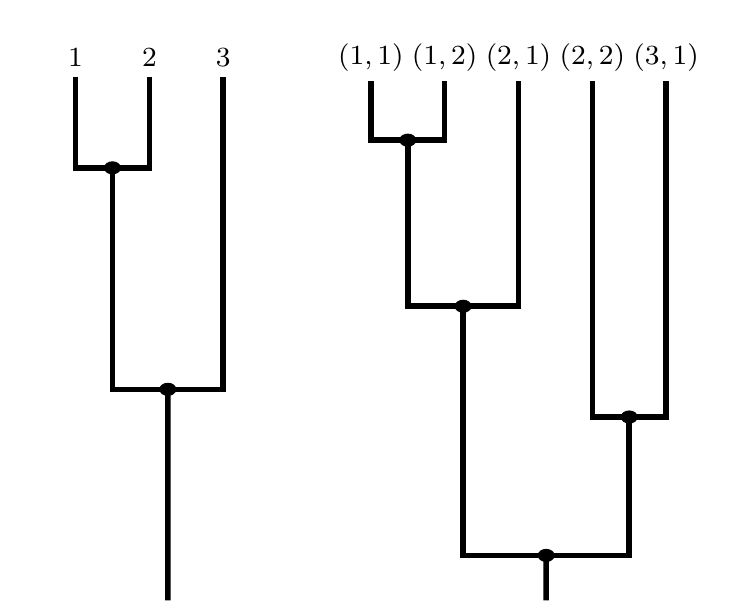}
  \caption{A realization of a finite nested Kingman coalescent on the
    set $I = \set{(1,1), (1,2), (2,1), (2,2), (3,1)}$. The species
    coalescent on the left starts with the equivalence classes
    $\set{(1,1), (1,2)}, \set{(2,1), (2,2)}$ and $\set{(3,1)}$ (here
    abbreviated by 1, 2 and 3). Its branch points are speciation
    events. The tree on the right side depicts the gene coalescent.
    Notice that merging events in the gene coalescent can only happen
    after the corresponding species have merged in the species
    coalescent.}
  \label{fig:FiniteNestedKingman}
\end{figure}

The definition of the finite nested Kingman coalescent describes the
behavior of a Markov process with only finitely many states. Thus, it
is clear that such a process exists and is unique in distribution.
Figure~\ref{fig:FiniteNestedKingman} shows a realization of a finite
nested Kingman coalescent.

We now define the nested Kingman coalescent for an infinite set of
individuals (that is, with $I = \N^2$). For the existence of this
process we refer to the construction of (more general) nested
coalescents in \cite[section 5]{Blancas18a}.
\begin{definition}[Nested Kingman coalescent]
  Let $\gamma_s, \gamma_g > 0$. The \emph{nested Kingman coalescent
    with rates $(\gamma_s, \gamma_g)$} is a continuous-time Markov
  process
  $\calR = (\calR(t))_{t \geq 0} = (\calR_s(t), \calR_g(t))_{t \geq
    0}$ with values in $\calN(\N^2)$ such that for any finite
  $I \subset \N^2$ the restriction of $\calR$ to $\calN(I)$ is a
  finite nested Kingman coalescent on $I$ with rates
  $(\gamma_s, \gamma_g)$.
\end{definition}

It follows immediately from the definition that the initial states of
the species and the gene coalescent are
\begin{align*}
  \calR_s(0) = S_0 \quad \text{and} \quad \calR_g(0) = G_0.
\end{align*}

It is well-known that the standard Kingman coalescent immediately
comes down from infinity, meaning that after any positive time the
coalescent almost surely has only finitely many blocks left, even if
we start with infinitely many blocks. The same is true for the nested
Kingman coalescent as stated in the next lemma. A proof can be found
in \cite[section 6]{Blancas18a}.
\begin{lemma}\label{lm:NestedKingmanComesDownFromInfty}
  The nested Kingman coalescent immediately comes down from infinity.
  That is, if $\calR = (\calR_s(t), \calR_g(t))_{t \geq 0}$ is a
  nested Kingman coalescent, then for every $t>0$ both $\calR_s(t)$
  and $\calR_g(t)$ almost surely consist of only finitely many blocks.
\end{lemma}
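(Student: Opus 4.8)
The plan is to decouple the two levels, handling the species coalescent first and then treating the gene coalescent on the interval after the species have already reduced to finitely many blocks. By definition the species coalescent $\calR_s = (\calR_s(t))_{t\ge 0}$ is itself a standard Kingman coalescent with rate $\gamma_s$ started from the infinitely many blocks of $S_0$ (any two blocks merge at rate $\gamma_s$). Since it is classical that the Kingman coalescent immediately comes down from infinity, for every $t>0$ the relation $\calR_s(t)$ has only finitely many blocks almost surely. This settles the claim for $\calR_s$ and, more importantly, supplies the key input for the gene coalescent.

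For the gene coalescent I would fix $t>0$, choose an auxiliary time $s\in(0,t)$, and condition on the species coalescent. By the first step there are almost surely only finitely many species blocks at time $s$; call them $B_1,\dots,B_K$, where $K=\abs{\calR_s(s)}<\infty$. The decisive structural observation is that species blocks can only \emph{merge} as time increases, so if two gene lineages lie in the same species block $B_j$ at time $s$, they remain in a common species block at every later time $u\ge s$, and hence every such pair stays eligible to coalesce at rate $\gamma_g$ throughout $[s,t]$. Writing $S_{B_j}$ for the set of gene lineages contained in $B_j$ at time $s$, it follows that the partition which $\calR_g(u)$ induces on $S_{B_j}$ evolves, for $u\in[s,t]$, exactly as a Kingman coalescent with rate $\gamma_g$ on $S_{B_j}$: two of its blocks merge precisely when the corresponding gene blocks merge, which happens at rate $\gamma_g$ since they always sit in a common species block, while gene lineages arriving from other species blocks leave this induced block count unchanged. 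Even though $S_{B_j}$ may still be infinite at time $s$, the coming-down-from-infinity property applied to this rate-$\gamma_g$ Kingman coalescent guarantees that after the positive time $t-s$ it has only finitely many blocks; that is, only finitely many blocks of $\calR_g(t)$ meet $S_{B_j}$.

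To conclude, I would observe that at time $s$ every gene lineage lies in exactly one of the $K$ species blocks, so $S_{B_1},\dots,S_{B_K}$ cover all gene lineages and every block of $\calR_g(t)$ meets at least one $S_{B_j}$. Therefore the total number of blocks of $\calR_g(t)$ is at most the sum over $j=1,\dots,K$ of the number of $\calR_g(t)$-blocks meeting $S_{B_j}$, a finite sum of almost surely finite terms, hence almost surely finite. Together with the first step this proves that both $\calR_s(t)$ and $\calR_g(t)$ consist of finitely many blocks for every $t>0$.

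I expect the main obstacle to be exactly the point that forces the introduction of the intermediate time $s$: one cannot apply coming-down-from-infinity to the genes directly at time $0$, because there are infinitely many species at time $0$, each carrying infinitely many gene lineages, and the nesting constraint makes the gene coalescent \emph{slower} than an unconstrained Kingman coalescent, so no naive domination yields finiteness of the total count. The resolution is to wait until the positive time $s$, after which only the finitely many species blocks $B_1,\dots,B_K$ remain, and then to use the monotonicity of species blocks to recognise the induced gene dynamics on each $S_{B_j}$ as a genuine unconstrained Kingman coalescent, to which the classical result applies on the interval $[s,t]$.
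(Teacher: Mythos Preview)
Your argument is correct. The paper itself does not prove this lemma; it simply states that ``A proof can be found in \cite[section 6]{Blancas18a}'' and moves on. Your two-stage approach---first using that the species coalescent is an ordinary Kingman coalescent and hence comes down from infinity, then freezing an intermediate time $s\in(0,t)$ at which only finitely many species blocks $B_1,\dots,B_K$ remain, and finally recognising the gene dynamics restricted to each $S_{B_j}$ on $[s,t]$ as a rate-$\gamma_g$ Kingman coalescent---is exactly the natural decoupling strategy, and it is essentially how the cited reference proceeds as well.

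One small remark on presentation: the claim that the induced partition on $S_{B_j}$ evolves ``exactly as a Kingman coalescent'' is justified by sampling consistency. Since every pair of gene blocks intersecting $S_{B_j}$ lies in a common species block throughout $[s,t]$, each such pair carries an independent rate-$\gamma_g$ clock, and mergers with blocks outside $S_{B_j}$ do not change the restricted partition. If you prefer to avoid arguing that the restriction is \emph{exactly} Kingman, it suffices to note that the restricted block count is stochastically dominated by that of a Kingman coalescent on $S_{B_j}$ started at time $s$, which already gives finiteness at time $t$; either formulation is fine.
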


\subsection{The nested Kingman coalescent measure tree}

In this subsection we define a random m2m space called the nested
Kingman coalescent measure tree. Roughly speaking, it is the
genealogical tree of the gene coalescent of a nested Kingman
coalescent equipped with a two-level measure that represents uniform
sampling of species on the second level and uniform sampling of
individuals in a single species on the first level.

Let $\calR = (\calR_s, \calR_g)$ be a nested Kingman coalescent and
$\P$ its law. For $\vect{x} = (x_1, x_2) \in \N^2$ and
$\vect{y} = (y_1, y_2) \in \N^2$ we define the coalescence time of
$\vect{x}$ and $\vect{y}$ by
\begin{align*}
  r_g(\vect{x}, \vect{y}) \asn \inf\set{t \geq 0 | (\vect{x},
  \vect{y}) \in \calR_g(t)}.
\end{align*}
The law of $r_g(\vect{x}, \vect{y})$ is
\begin{equation}\label{eq:NestedKingmanDistrOfDistances}
  \begin{aligned}
    r_g(\vect{x}, \vect{y}) &\sim
    \begin{cases}
      \ExpDistribution{\gamma_g}  \ast \ExpDistribution{\gamma_s}, & \text{
        if } x_1
      \not= y_1  \\
      \ExpDistribution{\gamma_g}, & \text{ if $x_1 = y_1$ and $x_2\not=
        y_2$} \\
      \delta_0, & \text{ if $\vect{x}=\vect{y}$},
    \end{cases} 
  \end{aligned}
\end{equation}
where $\ExpDistribution{\gamma}$ denotes the exponential distribution
with parameter $\gamma>0$ and $\mu\ast\eta$ denotes the convolution of
two distributions $\mu$ and $\eta$. The function $r_g$ satisfies the
triangle inequality and is thus a (random) metric on $\N^2$ (in fact
it is even an ultra-metric). Let $(Z, r)$ denote the completion of the
metric space $(\N^2, r_g)$.

\begin{remark}[Partial exchangeability]
  \label{rm:NestedKingmanIsPartExchangeable}
  The distances between individuals of the nested Kingman coalescent
  are partially exchangeable in the following sense. Let
  $\widetilde{P}$ be the set of finite permutations $p$ on $\N^2$ with
  the property that $\pi_1 p(i, j) = \pi_1 p(i,k)$ for all
  $i, j, k \in \N$, where $\pi_1$ denotes the projection to the first
  component of a vector (\ie the first components of $p(\vect{x})$ and
  $p(\vect{y})$ coincide whenever the first components of
  $\vect{x}, \vect{y} \in \N^2$ coincide).
  Then, for every $\vect{x_1}, \dotsc, \vect{x_n} \in \N^2$ the law of
  $R(\vect{x_1}, \dotsc, \vect{x_n})$ is equal to the law of
  $R(p(\vect{x_1}), \dotsc, p(\vect{x_n}))$ for every permutation
  $p \in \widetilde{P}$. In other words, the law of
  $R(\vect{x_1}, \dotsc, \vect{x_n})$ is invariant under
  $\widetilde{P}$.
\end{remark}

For all $M, N \in \N$ we define the two-level measure
$\nu_{M, N} \in \calM_1(\calM_1(Z))$ by
\begin{equation}\label{eq:NuMN}
  \nu_{M, N} \asn \frac{1}{M} \sum_{i=1}^M \delta_{\left(\frac{1}{N}
  \sum_{j=1}^N \delta_{(i, j)}\right)}.
\end{equation}
$\nu_{M, N}$ samples uniformly one of the first $M$ species and then
samples uniformly one of the first $N$ individuals in that species.
Let $H_{M, N}$ be the function that maps a realization of the nested
Kingman coalescent to the m2m space $(Z, r, \nu_{M, N})$ and define
\begin{equation*}
  \Q_{M, N} \asn H_{M,N*}\P \in \calM_1(\Mtwo).
\end{equation*}

\begin{theorem}\label{th:nestedKingmanM2M}
  \begin{enumerate}
    \item\label{it:th:nestedKingmanM2M1} The sequence $(\Q_{M, N})_N$
    is weakly convergent for every $M \in \N$. We denote its limit by
    $\Q_M$.
    \item\label{it:th:nestedKingmanM2M2} The sequence $(\Q_{M})_M$ is
    weakly convergent. 
  \end{enumerate}
\end{theorem}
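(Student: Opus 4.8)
The plan is to prove both parts by the same two-step scheme. First I would establish tightness of the sequence in question (namely $(\Q_{M,N})_N$ for part~\ref{it:th:nestedKingmanM2M1} and $(\Q_M)_M$ for part~\ref{it:th:nestedKingmanM2M2}) using the criterion of Corollary~\ref{cor:MtwoProbTightness2}; note that all measures involved are supported on $\MtwoProb$, since $\nu_{M,N}\in\calM_1(\calM_1(Z))$ and this property is preserved in the limit. Second, I would identify the limit by showing that $\Q_{M,N}[\Phi]$ (resp.\ $\Q_M[\Phi]$) converges for every $\Phi\in\tfset$. Because $\tfset$ is convergence determining for $\calM_1(\Mtwo)$ (Section~\ref{sec:tightness}) and every $\Phi\in\tfset$ is bounded and continuous, tightness produces weakly convergent subsequences, and the convergence of all the numbers $\Q_{M,N}[\Phi]$ forces every subsequential limit to assign the same value to each test function; hence the limit is unique and the full sequence converges. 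Throughout I would exploit that $\mass{\nu}\equiv 1$ and the normalizations are trivial, so that \eqref{TF1} and \eqref{TF2} are constant and \eqref{TF3} reduces to an average of sampled gene-coalescence distance matrices.

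For part~\ref{it:th:nestedKingmanM2M1} I fix $M$ and let $N\to\infty$. Evaluating a test function of the form \eqref{TF3} under $\Q_{M,N}$ amounts to choosing $m$ species indices uniformly with replacement from $\set{1,\dots,M}$, sampling $n_k$ individuals uniformly among the first $N$ of each chosen species, and averaging $\phi\circ R$ over the resulting distance matrix. By the partial exchangeability of the distances (Remark~\ref{rm:NestedKingmanIsPartExchangeable}) and the explicit law \eqref{eq:NestedKingmanDistrOfDistances}, the probability of sampling a repeated individual vanishes as $N\to\infty$ and the joint law of the finitely many sampled distances converges to an explicit limit (within-species distances governed by a Kingman gene coalescent, cross-species distances by a species coalescence followed by gene coalescence); since the number of species is the fixed finite number $M$, this limit exists, giving convergence of $\Q_{M,N}[\Phi]$. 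For tightness I note that under $\Q_{M,N}$ the first moment measure $\mm{\nu}$ is the uniform measure $\frac{1}{MN}\sum_{i,j}\delta_{(i,j)}$ on the $MN$ sampled individuals, so $\Q_{M,N}[\DD{\mm{\nu}}]$ is the law of $r_g$ between two uniformly chosen individuals, a mixture of $\delta_0$, $\ExpDistribution{\gamma_g}$ and $\ExpDistribution{\gamma_g}\ast\ExpDistribution{\gamma_s}$ with weights in $[0,1]$; it is therefore dominated by the fixed finite measure $\delta_0+\ExpDistribution{\gamma_g}+\ExpDistribution{\gamma_g}\ast\ExpDistribution{\gamma_s}$, which verifies condition~\ref{it:cor:MtwoProbTightness2_2} of Corollary~\ref{cor:MtwoProbTightness2}. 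For condition~\ref{it:cor:MtwoProbTightness2_1}, since for small $\epsilon$ the mass $\mm{\nu}(\clBall(x,\epsilon))$ is essentially the within-species empirical mass near $x$, the modulus-of-mass estimate for the Kingman coalescent measure tree from \cite[Section~6]{GPW09} applies species by species, and $M$ being fixed this yields the required smallness (uniform in $N$) of the expected thin-point mass as $\delta\searrow 0$.

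For part~\ref{it:th:nestedKingmanM2M2} I let $M\to\infty$, where $\Q_M$ (the limit from part~\ref{it:th:nestedKingmanM2M1}) describes a nested Kingman coalescent on $M$ species, each carrying a Kingman gene-coalescent measure tree, weighted uniformly; the expected limit is the nested Kingman coalescent measure tree on all of $\N^2$. The values $\Q_M[\Phi]$ again reduce to finite distance-matrix laws, and as $M\to\infty$ the sampled species indices become almost surely distinct while the species-coalescence times among any fixed number of sampled species converge to those of the full species coalescent, so $\lim_M\Q_M[\Phi]$ exists. The domination of $\Q_M[\DD{\mm{\nu}}]$ by a fixed finite measure is obtained as in part~\ref{it:th:nestedKingmanM2M1}, now using that the species-coalescence time of two randomly chosen species among $M$ has a tail bounded uniformly in $M$ because the species Kingman coalescent comes down from infinity.

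The main obstacle is condition~\ref{it:cor:MtwoProbTightness2_1} of Corollary~\ref{cor:MtwoProbTightness2}, uniformly in $M$. Both the species and the gene coalescent come down from infinity (Lemma~\ref{lm:NestedKingmanComesDownFromInfty}), and the delicate step will be to quantify the joint speed of this descent so as to bound, uniformly in $M$, the expected $\mm{\nu}$-mass under $\Q_M$ of points $x$ whose ball $\clBall(x,\epsilon)$ is $\delta$-thin. This is the two-level analogue of the single-coalescent estimate in \cite[Section~6]{GPW09}, and I expect controlling the interaction between the two levels---a species contributes mass only after it has merged, so thin points can stem from species that have coalesced only recently---to be the crux of the argument.
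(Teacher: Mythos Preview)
Your overall strategy---tightness via Corollary~\ref{cor:MtwoProbTightness2} plus uniqueness of subsequential limits via convergence of the numbers $\Q_{M,N}[\Phi]$ (resp.\ $\Q_M[\Phi]$) for all $\Phi\in\tfset$---is exactly the paper's approach, and your treatment of part~\ref{it:th:nestedKingmanM2M1} matches it closely. (The paper phrases the thin-point estimate via the relative frequency $\freq{1}{1}{\epsilon}$ and the fact that $\P(\freq{1}{1}{\epsilon}=0)=0$, rather than invoking \cite{GPW09} directly, but for fixed $M$ this is the same idea.)

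For part~\ref{it:th:nestedKingmanM2M2} there are two concrete gaps. First, you do not say how to transfer estimates from $\Q_{M,N}$ to $\Q_M$, which is only known as a weak limit. The paper passes the domination $\Q_{M,N}[\DD{\mm{\nu}}]\leq\mu_0$ to $\Q_M$ using continuity of $(X,r,\nu)\mapsto\int f\,\dif\DD{\mm{\nu}}$, and passes the thin-point bound using that $(X,r,\nu)\mapsto\mm{\nu}(\{x:\mm{\nu}(\clBall(x,\epsilon))<\delta\})$ is lower semi-continuous (Lemma~\ref{lm:AMMDLowerSemiCts}). Your remark about uniform tails of species-coalescence times is unnecessary here: the domination by $\mu_0$ follows directly from the pairwise law~\eqref{eq:NestedKingmanDistrOfDistances}, which does not depend on $M$ at all.

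Second, and more importantly, your plan for condition~\ref{it:cor:MtwoProbTightness2_1} in part~\ref{it:th:nestedKingmanM2M2} is too vague to close. The paper's argument is \emph{not} a quantitative joint-descent estimate but an exchangeability argument: partial exchangeability reduces the expected thin-point mass to $\P\bigl(\tfrac{1}{M}\sum_{l=1}^M\freq{1}{l}{\epsilon}\leq\delta\bigr)$; the sequence $(\freq{1}{l}{\epsilon})_{l\geq 2}$ is exchangeable, so de~Finetti produces a random measure $\Xi$ with $\tfrac{1}{M}\sum_{l}\freq{1}{l}{\epsilon}\to\int x\,\dif\Xi(x)$ a.s.; and this limit vanishes precisely on $\{\Xi=\delta_0\}=\{\freq{1}{l}{\epsilon}=0\text{ for all }l\geq 2\}$. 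Lemma~\ref{lm:AllFrequenciesAre0} shows that this event is null, and \emph{this} is where coming-down-from-infinity (Lemma~\ref{lm:NestedKingmanComesDownFromInfty}) actually enters: if $\freqvect{n}{\epsilon}=\vect{0}$ held for infinitely many $n$, the gene coalescent at time $\epsilon$ would have infinitely many blocks. Without the relative-frequency and de~Finetti machinery, your sketch does not yield the required uniform-in-$M$ control.
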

Therefore, the limit
\begin{displaymath}
\Q \asn \wlim_{M\to\infty}\wlim_{N\to\infty}\Q_{M, N} = \wlim_{M\to\infty}\Q_M
\end{displaymath}
exists. We call any random variable with values in $\Mtwo$ and law
$\Q$ a \emph{nested Kingman coalescent measure tree}.

\begin{remark}[Generalization of Theorem~\ref{th:nestedKingmanM2M}]
  Recall that the $\Lambda$-coalescent is a generalization of the
  Kingman coalescent that allows multiple mergers and in which the
  merging rates are described by a finite measure
  $\Lambda \in \calM_f([0, 1])$. In a similar manner we can generalize
  the nested Kingman coalescent to a \emph{nested
    $(\Lambda_s, \Lambda_g)$-coalescent}, where the species coalescent
  behaves like a $\Lambda_s$-coalescent and the gene coalescent
  behaves like a $\Lambda_g$-coalescent (inside of single species
  blocks). The proof of Theorem~\ref{th:nestedKingmanM2M} is valid
  even for the nested $(\Lambda_s, \Lambda_g)$-coalescent as long as
  the nested coalescent immediately comes down from infinity. The
  latter condition is true if and only if both $\Lambda_s$ and
  $\Lambda_g$ are such that the corresponding $\Lambda$-coalescents
  immediately come down from infinity (\cf \cite{Blancas18b}).

  {
    It is an open question whether Theorem~\ref{th:nestedKingmanM2M}
    still holds for $\Lambda$-coalescents which are dust-free but do
    not come down from infinity (\cf \cite[Theorem 4]{GPW09} in which
    the authors construct a $\Lambda$-coalescent measure tree for all
    $\Lambda$-coalescents which satisfy the dust free property). We
    strongly suspect that this is true. However, a necessary
    ingredient for our proof (in particular
    Lemma~\ref{lm:AllFrequenciesAre0}) is that the nested coalescent
    immediately comes down from infinity and we were not able to find a
    more general proof.
  }
\end{remark}

\subsection{Proof of Theorem~\ref{th:nestedKingmanM2M}}

The proofs of both statements of Theorem~\ref{th:nestedKingmanM2M}
use the same kind of argument. First we show that the sequence under
consideration has at most one limit point. Then we show that the
sequence is relatively compact, \ie every subsequence has a convergent
subsequence. Consequently, because the limit point is unique, we may
conclude that the original sequence is convergent.

Two main tools when working with coalescents are exchangeability and
relative frequencies of blocks. We already showed in
Remark~\ref{rm:NestedKingmanIsPartExchangeable} that the nested
Kingman coalescent is partially exchangeable. Let us now define the
relative frequencies of blocks.
\begin{definition}
  For all $i, l \in \N$ and $t \in \Rplus$ we define 
  \begin{align*}
   \freq{i}{l}{t} &\asn \lim_{N \to \infty}\frac{1}{N}
                   \sum_{n=1}^N 
                   \One_{[0,t]}(r_g( (i,1), (l, n))).
  \end{align*}
  $\freq{i}{l}{t}$ is the relative frequency of the block of $(i, 1)$
  \wrt species $l$ at time $t$. 

\end{definition}

It is a standard fact from coalescent theory that in a Kingman
coalescent the relative frequencies of blocks exist. By definition the
gene coalescent restricted to a single species $i \in \N$ is a Kingman
coalescent. Thus, the relative frequency $\freq{i}{i}{t}$ exists for
every $t \geq 0$. Moreover, the Kingman coalescent almost surely has
proper frequencies (\cf \cite[Theorem 8]{Pitman99}), implying that
\begin{equation}\label{eq:th:nestedKingmanM2M_0}
  \P(\freq{i}{i}{t}=0) = 0
\end{equation}
for every $t > 0$.

For $\freq{i}{l}{t}$ with $i \not= l$ the situation is a little
different since the species $i$ and $l$ have to merge first. Let
$\tau_s(i, l)$ denote the coalescent time of the species $i$ and $l$
in the species tree. Clearly, we have $\freq{i}{l}{t} = 0$ for
$t < \tau_s(i, l)$. For $t \geq \tau_s(i, l)$ the gene coalescent
restricted to $\set{ (l, j) | j \in \N}\cup \set{(i, 1)}$ behaves like
a Kingman coalescent. Thus, the relative frequency $\freq{i}{l}{t}$
also exist for $t \geq \tau_s(i, l)$.

Because the nested Kingman coalescent starts with singleton blocks at
time $t=0$, we have $\freq{i}{l}{0} = 0$ for all $i, l \in \N$.
Moreover, almost surely the function $t \mapsto \freq{i}{l}{t}$ is
non-decreasing (the relative frequency increases when blocks merge)
and converges to 1 (eventually all blocks have merged to a single
block).

In the next lemma we state an analogon of
equation~\eqref{eq:th:nestedKingmanM2M_0} for the relative frequencies
$\freq{i}{l}{t}$ with $i \not= l$. Though it may happen that
$\freq{i}{l}{t} = 0$ \emph{for some} $l \not= i$, it may not happen
\emph{for all} $l \not= i$. This follows from the partial
exchangeability explained in
Remark~\ref{rm:NestedKingmanIsPartExchangeable}.
\begin{lemma}\label{lm:AllFrequenciesAre0}
  For every $t>0$ and every $i \in \N$
  \begin{equation}\label{eq:lm:AllFrequenciesAre0_1}
    \P(\freq{i}{l}{t} = 0 \text{ for all }l \not= i) = 0.
  \end{equation}
\end{lemma}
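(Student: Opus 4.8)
The plan is to reduce the statement to a dust property of an exchangeable random partition of $\N$ built from one representative individual per species, and then to feed in the come-down-from-infinity property of Lemma~\ref{lm:NestedKingmanComesDownFromInfty}. Fix $t>0$ and $i$, write $A \asn \set{\freq{i}{l}{t} = 0 \text{ for all } l \neq i}$ for the event in \eqref{eq:lm:AllFrequenciesAre0_1}, and let $B$ denote the block of $(i,1)$ in the gene coalescent $\calR_g(t)$. First I would record the elementary reduction that drives everything: if $(l,1) \in B$ for some $l \neq i$, then $\freq{i}{l}{t} > 0$. Indeed, $(l,1) \in B$ means $r_g((i,1),(l,1)) \leq t$, so the whole $\calR_g(t)$-block of $(l,1)$ inside species $l$ is contained in $B$; since the gene coalescent restricted to the single species $l$ is an ordinary Kingman coalescent it has proper frequencies, so that block carries positive frequency within species $l$ (this is \eqref{eq:th:nestedKingmanM2M_0} applied to $l$), whence $\freq{i}{l}{t} > 0$. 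Intersecting the countably many almost sure events in \eqref{eq:th:nestedKingmanM2M_0}, it follows that, almost surely, $A$ is contained in the event that $(i,1)$ shares its $\calR_g(t)$-block with no other representative $(l,1)$, $l \neq i$.

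It therefore suffices to show that this latter event has probability zero. I would consider the partition $P^\ast$ of the species index set $\N$ defined by $l \sim l' \iff r_g((l,1),(l',1)) \leq t$ (\ie $l$ and $l'$ lie in the same block of $P^\ast$ precisely when their representatives $(l,1)$ and $(l',1)$ fall into the same block of $\calR_g(t)$); this is genuinely an equivalence relation, inherited from $\calR_g(t)$. By the partial exchangeability of Remark~\ref{rm:NestedKingmanIsPartExchangeable}, applied to permutations $p \in \widetilde{P}$ of the form $(l,k) \mapsto (\sigma(l),k)$ with $\sigma$ a finite permutation of $\N$, the partition $P^\ast$ is an exchangeable random partition of $\N$, and the event we must control is exactly $\set{\set{i} \text{ is a singleton block of } P^\ast}$.

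Now I would invoke come-down-from-infinity: by Lemma~\ref{lm:NestedKingmanComesDownFromInfty} the gene coalescent $\calR_g(t)$ almost surely has only finitely many blocks, hence $P^\ast$ has only finitely many blocks, and in particular only finitely many singleton blocks. Consequently the asymptotic frequency of singletons (the ``dust'') of $P^\ast$ vanishes almost surely. By Kingman's theory of exchangeable partitions the probability that a fixed element is a singleton equals the expected dust, so $\P(\set{i} \text{ is a singleton block of } P^\ast) = 0$, which together with the first paragraph gives $\P(A)=0$. The main obstacle is the correct use of come-down-from-infinity at this point: it is precisely the finiteness of the number of blocks of $\calR_g(t)$ that forces the dust to be zero, and this is the single ingredient that is unavailable for coalescents which are merely dust-free without coming down from infinity, in agreement with the remark preceding the lemma. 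I would also be careful to justify the two supporting facts rigorously, namely that the per-species gene coalescent has proper frequencies (used in the first paragraph, via \cite[Theorem 8]{Pitman99}) and the exchangeable-partition identity relating the singleton probability of a fixed element to the expected dust (used here).
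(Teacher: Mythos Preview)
Your argument is correct and reaches the same conclusion from the same two ingredients (partial exchangeability and Lemma~\ref{lm:NestedKingmanComesDownFromInfty}), but it is organized differently from the paper's proof. The paper works with the exchangeable sequence of vectors $\vect{f}_n(t) \asn (\freq{n}{l}{t})_{l\neq n}$ in $[0,1]^\N$, applies de~Finetti to get a directing random measure $\Xi$, and argues by contradiction: if $\P(\vect{f}_i(t)=\vect{0})>0$ then $\Xi(\{\vect 0\})>0$ with positive probability, hence infinitely many $n$ have $\vect{f}_n(t)=\vect{0}$; using proper frequencies this forces infinitely many distinct $\calR_g(t)$-blocks, contradicting come-down-from-infinity. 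You instead first reduce $A$ (via proper frequencies, exactly your use of \eqref{eq:th:nestedKingmanM2M_0}) to the event that $i$ is a singleton in the exchangeable partition $P^\ast$ of $\N$ obtained from one representative per species, and then kill that event by the Kingman paintbox identity $\P(\{i\}\text{ is a singleton of }P^\ast)=\E[\text{dust}(P^\ast)]$, where the dust vanishes because $P^\ast$ has finitely many blocks almost surely.

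Both routes are short and rely on the same inputs; what your formulation buys is a clean probabilistic picture (singleton in an exchangeable partition with no dust), while the paper's version avoids introducing $P^\ast$ and instead leverages de~Finetti directly on the frequency vectors. Your verification that the species-level permutations $(l,k)\mapsto(\sigma(l),k)$ lie in $\widetilde{P}$ from Remark~\ref{rm:NestedKingmanIsPartExchangeable} and your explicit invocation of \cite[Theorem~8]{Pitman99} for proper frequencies are both appropriate; the paper uses the analog of the latter implicitly when concluding that $\vect{f}_{n_k}(t)=\vect{0}$ implies the $(n_k,1)$ sit in pairwise distinct blocks.
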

\begin{proof}
  We define for every $n \in \N$ and $t' \in \Rplus$ the infinite
  vector $\freqvect{n}{t'} \in [0, 1]^{\N}$ by
  \begin{align*}
    \freqvect{n}{t'} \asn (\freq{n}{l}{t'})_{l \not= n} =
    (\freq{n}{1}{t'}, \freq{n}{2}{t'}, \dotsc,
    \freq{n}{n-1}{t'} , \freq{n}{n+1}{t'}, \dotsc).
  \end{align*}
  We fix $t > 0$ and $i \in \N$. Observe that
  equation~\eqref{eq:lm:AllFrequenciesAre0_1} is equivalent to
  $\P(\freqvect{i}{t} = \vect{0}) = 0$ and that the sequence of
  vectors $(\freqvect{n}{t})_{n \in \N}$ is exchangeable (\cf
  Remark~\ref{rm:NestedKingmanIsPartExchangeable}). By de Finetti's
  theorem there is a random probability measure $\Xi$ on $[0, 1]^\N$
  such that $\Xi^{\otimes \N}$ is a regular conditional distribution
  of $(\freqvect{n}{t})_{n}$ given $\sigma(\Xi)$ (\cf \cite[Theorem
  3.1]{Aldous85}).

  We prove the claim by contradiction. Assume that
  \begin{align*}
    0 < \P(\freqvect{i}{t} = \vect{0}) = \int \Xi(\vect{0}) \dif\P.
  \end{align*}
  This is true if and only if $\P(\Xi(\vect{0})>0) > 0$ and in this
  case
  \begin{equation}\label{eq:lm:AllFrequenciesAre0_2}
    \begin{aligned}
      \P&(\freqvect{n}{t} = \vect{0} \text{ for infinitely many $n \in
        \N$}) \\
      &= \P(\freqvect{n}{t} = \vect{0} \text{ for infinitely many
        $n \in
        \N$} \mid \Xi(\vect{0}) > 0) \cdot \P(\Xi(\vect{0}) > 0) \\
      &= \P(\Xi(\vect{0}) > 0) \\
      &> 0.
    \end{aligned}
  \end{equation}
  However, if $\freqvect{n}{t} = \vect{0}$ for infinitely many
  $n \in \N$, then there is an increasing sequence of positive
  integers $(n_k)_k$ with $\freqvect{n_k}{t} = \vect{0}$. Observe that
  $\freqvect{n_k}{t} = \vect{0}$ implies that at time $t$ the block of
  $(n_k, 1)$ has not merged with a block of another species.
  Therefore, each $(n_l, 1)$ is in a different block and $\calR_g(t)$
  contains an infinite number of blocks. But by
  Lemma~\ref{lm:NestedKingmanComesDownFromInfty} we know that almost
  surely $\calR_g(t)$ contains only finitely many blocks. This is a
  contradiction to~\eqref{eq:lm:AllFrequenciesAre0_2}. Consequently,
  we must have $\P(\freqvect{i}{t} = \vect{0}) = 0$.
\end{proof}

Before we start to prove Theorem~\ref{th:nestedKingmanM2M}, observe
that the two-level measure $\nu_{M, N}$ from~\eqref{eq:NuMN}  has the
first moment measure
\begin{align*}
  \mm{\nu_{M, N}} = \frac{1}{M} \sum_{i=1}^M\frac{1}{N}
  \sum_{j=1}^N \delta_{(i, j)},
\end{align*}
\ie $\mm{\nu_{M, N}}$ is a uniform distribution on the set
$\set{1, \dotsc, M} \times \set{1, \dotsc, N}$.

\subsubsection{
Weak convergence of $(\Q_{M, N})_N$ for fixed $M$}

Let $M \in \N$ be fixed.

\paragraph{Uniqueness:}

Recall that the test functions from $\tfset$ are convergence
determining for $\calM_1(\Mtwo)$. Because all our m2m spaces are
elements of $\MtwoProb$ (\ie the measures on both levels have mass 1),
it is enough to consider test functions $\Phi$ of the form
\begin{equation}\label{eq:th:nestedKingmanM2M_1}
  \Phi((X, r, \nu)) = \int \int \phi \circ R \dif\vect{\mu}^{\otimes\vect{n}}
  \dif\nu^{\otimes m}(\vect{\mu})
\end{equation}
with $m \in \N$, $\vect{n} = (n_1, \dotsc, n_m) \in \N^m$ and
$\phi \in \calC_b(\Rplus^{\abs{\vect{n}} \times \abs{\vect{n}}})$. We
will explain why for each such $\Phi$ the limit
$\lim_{N\to\infty}\Q_{M, N}[\Phi]$ exists. Thus, the sequence
$(\Q_{M, N})_N$ has at most one limit point.

Fix a test function $\Phi$ as in \eqref{eq:th:nestedKingmanM2M_1}.
Then, we have
\begin{align*}
  \Q_{M, N}[\Phi] = \int \Phi \dif\Q_{M, N} = \int
  \Phi((Z, r, \nu_{M, N})) \dif\P = \int \int\int \phi \circ R
  \dif\vect{\mu}^{\otimes \vect{n}} \dif(\nu_{M, N})^{\otimes
  m}(\vect{\mu}) \dif\P.
\end{align*}
One can show that this converges to
\begin{equation}\label{eq:th:nestedKingmanM2M_2}
   \int \frac{1}{M^m}\sum\limits_{i_1,
  \dotsc, i_m=1}^M \phi\big(R\big({\scriptstyle(i_1, 1), \dotsc, (i_1, n_1), (i_2,
  n_1+1), \dotsc, (i_2, n_1+n_2), (i_3, n_1+n_2+1), \dotsc, (i_m, \abs{\vect{n}})}\big)\big) \dif\P 
\end{equation}
for $N \to \infty$ using the partial exchangeability of the distances
under $\P$ (\cf Remark~\ref{rm:NestedKingmanIsPartExchangeable}).
However, writing down a formal proof for general $m$ and $\vect{n}$ is
cumbersome and we would have to introduce a lot of notation. For this
reason we omit the proof. The reader may easily verify our claim for
small $m$ and $\vect{n}$ to understand what is going on here.
Heuristically, $\Phi$ corresponds to sampling $m$ species, then
sampling $n_1, \dotsc, n_m$ individuals in these species and then
evaluating the (genetic) distances between the individuals. We sample
with the two-level measure $\nu_{M, N}$, which means we uniformly
sample from the first $M$ species and in each of these species we
uniformly sample from the first $N$ individuals. Since $M$ is finite,
it is possible that some species are sampled more than once. But for
$N \to \infty$ the probability to sample a single individual more than
once goes to 0.

\paragraph{Relative compactness:}

We use Corollary~\ref{cor:MtwoProbTightness2}. Thus, we have to show
the following:
\begin{enumerate}
  \item there is a finite Borel measure $\mu_0$ on $\Rplus$ with
  $\Q_{M, N}[\DD{\mm{\nu}}] \leq \mu_0$ for all $N \in \N$,
  \item $ \lim\limits_{\delta \searrow 0} \limsup\limits_{N \to \infty}
  \Q_{M, N}[\mm{\nu}(\set{x \in X | \mm{\nu}(\clBall(x, \epsilon)) < \delta})]
  = 0$ for every $\epsilon>0$.
\end{enumerate}

\smallskip

\begin{enumerate}
  \item Define the finite measure
  \begin{equation}\label{eq:pr:th:nestedKingmanM2M_1}
    \mu_0 \asn \delta_0 + \ExpDistribution{\gamma_s} +
    \ExpDistribution{\gamma_g}  \ast \ExpDistribution{\gamma_s},
  \end{equation}
  where $\ExpDistribution{\gamma}$ denotes the exponential
  distribution with parameter $\gamma>0$ and $\mu\ast\eta$ denotes the
  convolution of two distributions $\mu$ and $\eta$. The law of
  $r(\vect{x}, \vect{y})$ is bounded from above by the finite measure
  $\mu_0$ for all $\vect{x}, \vect{y} \in \N^2$ (\cf
  \eqref{eq:NestedKingmanDistrOfDistances}). Since
  $\DD{\mm{\nu_{M, N}}} = r_*\mm{\nu_{M, N}}$, we get the desired inequality
  \begin{align*}
    \Q_{M, N}[\DD{\mm{\nu}}] = \int \DD{\mm{\nu_{M, N}}} \dif\P  \leq \mu_0.
  \end{align*}
  \item Let $\epsilon>0$. Then we have
  \begin{equation}\label{eq:pr:th:nestedKingmanM2M_2}
    \begin{aligned}
      \Q_{M, N}&[\mm{\nu}(\set{x \in X | \mm{\nu}(\clBall(x,
        \epsilon)) < \delta})] \\ &= \int \mm{\nu_{M, N}}(\set{x \in Z
        | \mm{\nu_{M,
            N}}(\clBall(x, \epsilon)) < \delta}) \dif\P \\
      &= \frac{1}{M} \sum_{i=1}^M \frac{1}{N} \sum_{j=1}^N\int
      \One_{[0, \delta)}\left(\mm{\nu_{M, N}}(\clBall((i,j), \epsilon))\right)
      \dif\P \\ &= \P(\mm{\nu_{M, N}}(\clBall((1,1), \epsilon)) <
      \delta)\\ &\leq \P(\mm{\nu_{M, N}}(\clBall((1,1), \epsilon))
      \leq \delta),
  \end{aligned}
\end{equation}
where we used the fact that
$\P(\mm{\nu_{M, N}}(\clBall((i,j), \epsilon)) < \delta)$ is the same
for all $i, j \in \N$. By the definition of the relative frequencies
$\freq{1}{i}{\epsilon}$ we have
  \begin{align*}
    \mm{\nu_{M,N}}(\clBall((1,1)), \epsilon)) \to  \frac{1}{M}
    \sum_{l=1}^M \freq{1}{l}{\epsilon} 
  \end{align*}
  almost surely for $N \to \infty$ and Fatou's lemma yields
  \begin{equation}\label{eq:pr:th:nestedKingmanM2M_3}
    \limsup_{N \to \infty}
    \P(\mm{\nu_{M,N}}(\clBall((1,1), \epsilon)) \leq \delta) \leq
    \P\left(\frac{1}{M}\sum_{l=1}^M\freq{1}{l}{\epsilon}
    \leq \delta\right).
  \end{equation}
  Combining \eqref{eq:pr:th:nestedKingmanM2M_2},
  \eqref{eq:pr:th:nestedKingmanM2M_3} and \eqref{eq:th:nestedKingmanM2M_0} we get
  \begin{align*}
    \lim_{\delta \searrow 0}&\limsup_{N \to \infty}\Q_{M,
    N}[\mm{\nu}(\set{x \in X | \mm{\nu}(\clBall(x, \epsilon)) < \delta})] \\
    &\leq \lim_{\delta \searrow 0}
      \P\left(\frac{1}{M}\sum_{l=1}^M\freq{1}{l}{\epsilon} 
    \leq \delta\right) \\ 
    &= \P\left(\frac{1}{M}\sum_{l=1}^M\freq{1}{l}{\epsilon} =0\right)\\
    &\leq \P(\freq{1}{1}{\epsilon}=0) \\ 
    &= 0 .
  \end{align*}
\end{enumerate}

\subsubsection{
Weak convergence of $(\Q_M)_M$}

We have no information about $\Q_M$ other than that it is the weak
limit of $(\Q_{M, N})_N$. Thus we must derive its properties from the
approximating measures $\Q_{M, N}$.

\paragraph{Uniqueness:}

We show that the sequence $(\Q_M)_M$ has at most one limit point by
proving existence of the limit $\lim_{M\to\infty}\Q_{M}[\Phi]$ for
each $\Phi$ as in~\eqref{eq:th:nestedKingmanM2M_1}. Because these
test functions are convergence determining, it follows that the
sequence $(\Q_M)_M$ has at most one limit point.

Fix a test function $\Phi$ of the
form~\eqref{eq:th:nestedKingmanM2M_1}. Since $\Phi$ is continuous and
bounded, we have
$\lim_{M\to\infty}\Q_M[\Phi] =
\lim_{M\to\infty}\lim_{N\to\infty}\Q_{M, N}[\Phi]$.
Using~\eqref{eq:th:nestedKingmanM2M_2} and the partial
exchangeability of the distances
(Remark~\ref{rm:NestedKingmanIsPartExchangeable}) one can show that
\begin{align*}
  \lim_{M\to\infty}\Q_M[\Phi] &=
  \lim_{M\to\infty}\lim_{N\to\infty}\Q_{M, N}[\Phi] \\
  &= \int
  \phi\big(R\big((1,1), \dotsc, (1, n_1), (2, 1), \dotsc, (m, n_m)\big)\big)\dif\P.
\end{align*}
Again, we omit the cumbersome proof. Heuristically, if $M \to \infty$,
then the probability to sample a species more than once goes to 0.

\paragraph{Relative compactness:}

Again, we use Corollary~\ref{cor:MtwoProbTightness2}. We will show the
following:
\begin{enumerate}
  \item $\Q_{M}[\DD{\mm{\nu}}] \leq \mu_0$ for every $M \in \N$,
  where $\mu_0$ is defined in~\eqref{eq:pr:th:nestedKingmanM2M_1},
  \item
  $\lim\limits_{\delta \searrow 0} \limsup\limits_{M \to \infty}
  \Q_{M}[\mm{\nu}(\set{x \in X | \mm{\nu}(\clBall(x, \epsilon)) <
    \delta})] = 0$ for every $\epsilon>0$.
\end{enumerate}

\smallskip

\begin{enumerate}
  \item Fix $M \in \N$. Observe that for finite measures
  $\eta, \mu \in \calM_f(\Rplus)$ we have $\eta \leq \mu$ if and only
  if $\int f \dif\eta \leq \int f \dif\mu$ for every non-negative,
  bounded and continuous function $f$. For such $f$ the function
  \begin{align*}
    \MtwoProb &\to \Rplus \\
    (X, r, \nu) &\mapsto \int f \dif\DD{\mm{\nu}}
  \end{align*}
  is bounded and continuous as a concatenation of bounded and
  continuous functions. Because $\Q_{M}$ is the weak limit of
  $(\Q_{M, N})_N$ and because
  $\int f \dif\Q_{M, N}[\DD{\mm{\nu}}] \leq \int f \dif\mu_0$, we get
  \begin{align*}
    \int f \dif\Q_M[\DD{\mm{\nu}}] &= \int \int f \dif\DD{\mm{\nu}}
                                     \dif\Q_M  \\
                                   &= \lim_{N \to \infty}\int \int f \dif\DD{\mm{\nu}}
                                     \dif\Q_{M, N} \\
                                   &= \lim_{N \to \infty}\int f \dif\Q_{M, N}[\DD{\mm{\nu}}]
                                    \\ &\leq \int f \dif\mu_0.
  \end{align*}
  Therefore, we have $\Q_M[\DD{\mm{\nu}}] \leq \mu_0$ for every
  $M \in \N$.
  \item Fix $\epsilon > 0$. Because $\Q_{M, N}$ converges weakly to
  $\Q_M$, we have
  \begin{align*}
  \liminf_{N \to \infty}\Q_{M, N}[f] \geq \Q_M[f]
  \end{align*}
  for every bounded lower semi-continuous function
  $f \from \Mtwo \to \R$ (\cf \cite[Corollary 8.2.5]{Bogachev}).
  By Lemma~\ref{lm:fKisContinuous} and Lemma~\ref{lm:AMMDLowerSemiCts}
  the function
  \begin{align*}
    \MtwoProb &\to \Rplus \\
    (X, r, \nu) &\mapsto \mm{\nu}(\set{x \in X | \mm{\nu}(\clBall(x,
    \epsilon) < \delta)})
  \end{align*}
  is lower semi-continuous (and obviously bounded by 1). Therefore, we
  have
  \begin{align*}
    &\Q_{M}[\mm{\nu}(\set{x \in X | \mm{\nu}(\clBall(x, \epsilon)) <
    \delta})] \\
    &\leq \liminf_{N \to \infty} \Q_{M, N}[\mm{\nu}(\set{x
    \in X | \mm{\nu}(\clBall(x, \epsilon)) < \delta})].
  \end{align*}
  Using inequalities \eqref{eq:pr:th:nestedKingmanM2M_2} and
  \eqref{eq:pr:th:nestedKingmanM2M_3} we get
  \begin{align}\label{eq:pr:th:nestedKingmanM2M_4}
    \Q_{M}[\mm{\nu}(\set{x \in X | \mm{\nu}(\clBall(x, \epsilon)) <
    \delta})] &\leq
                \P\left(\frac{1}{M}\sum_{l=1}^M\freq{1}{l}{\epsilon}
                \leq \delta\right). 
  \end{align}
  
  $(\freq{1}{l}{\epsilon})_{l \geq 2}$ is a sequence of exchangeable
  random variables. By de Finetti's Theorem (\cf\cite[Theorem
  3.1]{Aldous85}) there exists a random probability measure $\Xi$ with
  values in $\calM_1([0, 1])$ such that $\Xi^{\otimes \N}$ is a regular
  conditional distribution of $(\freq{1}{l}{\epsilon})_{l \geq 2}$ given
  $\sigma(\Xi)$. In other words, $(\freq{1}{l}{\epsilon})_{l \geq 2}$
  is conditionally \iid given $\sigma(\Xi)$. It follows that
  \begin{align*}
    \frac{1}{M}\sum_{l=2}^M\freq{1}{l}{\epsilon} \xrightarrow{M \to \infty}
    \E(\freq{1}{2}{\epsilon} \mid \Xi) = \int x \dif\Xi(x)
  \end{align*}
  almost surely (\cf \cite[Equation 2.24]{Aldous85}). Fatou's lemma
  yields
  \begin{equation}\label{eq:pr:th:nestedKingmanM2M_5}
    \begin{aligned}
      \lim_{\delta \searrow 0}\limsup_{M \to \infty}
      \P\left(\frac{1}{M}\sum_{l=1}^M\freq{1}{l}{\epsilon} \leq \delta\right)  &\leq
      \lim_{\delta \searrow 0} \P\left(\int x \dif\Xi(x) \leq \delta\right) \\
      &= \P\left(\int x \dif\Xi(x) = 0\right) \\
      &= \P(\Xi = \delta_0).
    \end{aligned}  
  \end{equation}
  Since $\Xi^{\otimes \N}$ is the conditional distribution of
  $(\freq{1}{l}{\epsilon})_{l \geq 2}$ given $\sigma(\Xi)$, we have
  \begin{align*}
    \P(\freq{1}{l}{\epsilon} = 0 \text{ for all }l \geq 2 \mid \Xi =
    \delta_0) = 1.
  \end{align*}
  It follows that
  \begin{equation}\label{eq:pr:th:nestedKingmanM2M_6}
    \begin{aligned}
      \P(\Xi = \delta_0) &= \P(\Xi = \delta_0)\cdot
      \P(\freq{1}{l}{\epsilon} = 0 \text{ for all }l \geq 2 \mid \Xi =
      \delta_0) \\ 
      &= \P\left(\freq{1}{l}{\epsilon} = 0 \text{ for all }l \geq 2\right).
    \end{aligned}
  \end{equation}
  The latter probability is 0 by Lemma~\ref{lm:AllFrequenciesAre0}. By
  combining~\eqref{eq:pr:th:nestedKingmanM2M_4},
  \eqref{eq:pr:th:nestedKingmanM2M_5}
  and~\eqref{eq:pr:th:nestedKingmanM2M_6} we get the claim.
\end{enumerate}

\paragraph{Acknowledgments:} The article originates from a PhD project
under the supervision of Anita Winter at the University of
Duisburg-Essen. I am very grateful to Anita Winter for her constant
support and encouragement during the last few years. I would also like
to thank Fabian Gerle, Volker Krätschmer and Wolfgang Löhr for several
helpful comments and discussions. Moreover, I am thankful to the DFG SPP
Priority Programme 1590 for financial support.

 \bibliography{literature}
 \bibliographystyle{amsalpha}

\end{document}